\documentclass[twoside,11pt,reqno]{amsart}
\usepackage{amsmath,amssymb,amscd,mathrsfs,amscd}
\usepackage{graphics,verbatim}
\usepackage{todonotes}
\usepackage{hyperref}

\oddsidemargin .2in \evensidemargin .2in \textwidth 6in


\let\oldsection=\section

\newcommand{\losemi}{{\otimes \kern -.78em \ltimes}}
\newcommand{\rosemi}{{\otimes \kern -.78em \rtimes}}

\newcommand{\Hom}{\ensuremath{\operatorname{Hom}}}

\newcommand{\Ind}{\ensuremath{\operatorname{ind}}}

\newcommand{\Ext}{\operatorname{Ext}}

\newcommand{\0}{\bar 0}
\newcommand{\1}{\bar 1}
\newcommand{\Z}{\mathbb{Z}}
\newcommand{\C}{\mathbb{C}}

\newcommand{\gl}{\ensuremath{\mathfrak{gl}}}
\newcommand{\g}{\ensuremath{\mathfrak{g}}}
\newcommand{\e}{\ensuremath{\mathfrak{e}}}
\newcommand{\f}{\ensuremath{\mathfrak{f}}}

\newcommand{\res}{\ensuremath{\operatorname{res}}}

\newcommand{\X}{\mathcal{X}}


\newcommand{\fa}{\ensuremath{\mathfrak{a}}}
\newcommand{\fg}{\ensuremath{\mathfrak{g}}}
\newcommand{\fb}{\ensuremath{\mathfrak{b}}}
\newcommand{\fu}{\ensuremath{\mathfrak{u}}}

\newcommand{\ff}{\ensuremath{\f}}
\newcommand{\fe}{\ensuremath{\e}}

\newcommand{\fl}{\ensuremath{\mathfrak{l}}}
\newcommand{\fq}{\ensuremath{\mathfrak{q}}}
\newcommand{\fp}{\ensuremath{\mathfrak{p}}}
\newcommand{\ft}{\ensuremath{\mathfrak{t}}}
\newcommand{\fz}{\ensuremath{\mathfrak{z}}}

\renewcommand{\a}{\alpha}

\newcommand{\V}{\mathcal{V}}
\newcommand{\HH}{\operatorname{H}}
\newcommand{\F}{\mathcal{F}}


\newcommand{\la}{\lambda}

\newcommand{\glmn}{\mathfrak{gl}(m|n)}

\newcommand{\R}{\ensuremath{\mathbb{R}}}

\newcommand{\CC}{\mathcal{C}}
\newcommand{\FF}{\mathcal{F}}

\newcommand{\VV}{\mathcal{V}}

\newcommand{\XX}{\mathcal{X}}
\newcommand{\YY}{\mathcal{Y}}

\newcommand{\Tensor}{\operatorname{Tensor}}
\newcommand{\Loc}{\operatorname{Loc}}
\newcommand{\Proj}{\operatorname{Proj}}

\newcommand{\bT}{\mathbf T}
\newcommand{\bS}{\mathbf S}
\newcommand{\bC}{\mathbf C}
\newcommand{\bK}{\mathbf K}
\newcommand{\bL}{\mathbf L}
\newcommand{\bP}{\mathbf P}
\newcommand{\bI}{\mathbf I}
\renewcommand{\Im}{\operatorname{Im}}
\newcommand{\unit}{\ensuremath{\mathbf 1}}
\renewcommand{\Gamma}{\varGamma}




\newtheorem{theorem}{Theorem}[subsection]

\makeatletter\let\c@fact\c@theorem\makeatother

\makeatletter\let\c@note\c@theorem\makeatother

\newtheorem{lemma}{Lemma}[subsection]
\makeatletter\let\c@lemma\c@theorem\makeatother

\makeatletter\let\c@alg\c@theorem\makeatother

\makeatletter\let\c@remark\c@theorem\makeatother

\newtheorem{prop}{Proposition}[subsection]
\makeatletter\let\c@prop\c@theorem\makeatother

\makeatletter\let\c@conj\c@theorem\makeatother

\makeatletter\let\c@cor\c@theorem\makeatother

\newtheorem{defn}{Definition}[subsection]
\makeatletter\let\c@defn\c@theorem\makeatother

\theoremstyle{definition}
\newtheorem{example}{Example}[subsection]
\makeatletter\let\c@example\c@theorem\makeatother
\numberwithin{equation}{subsection}

%
%
\usepackage[capitalise]{cleveref}
\crefname{theorem}{Theorem}{Theorems}
\crefname{fact}{Fact}{Facts}
\crefname{note}{Note}{Notes}
\crefname{lemma}{Lemma}{Lemmas}
\crefname{alg}{Algorithm}{Algorithms}
\crefname{remark}{Remark}{Remarks}
\crefname{example}{Example}{Examples}
\crefname{prop}{Proposition}{Propositions}
\crefname{conj}{Conjecture}{Conjectures}
\crefname{cor}{Corollary}{Corollaries}
\crefname{defn}{Definition}{Definitions}
\crefname{equation}{\!\!}{\!\!} 

\newcounter{listequation}
\newenvironment{eqnlist}{\begin{list}
{(\thesubsection.\thelistequation)}
{\usecounter{listequation} \setlength{\itemsep}{1.0ex plus.2ex
minus.1ex}}\setcounter{listequation}{\value{equation}}}
{\setcounter{equation}{\value{listequation}}\end{list}}

\begin{document}
\title{Tensor Triangular Geometry for Classical Lie Superalgebras}

\author{Brian D. Boe }
\address{Department of Mathematics \\
          University of Georgia \\
          Athens, GA 30602}
\email{brian@math.uga.edu}
\author{Jonathan R. Kujawa}
\address{Department of Mathematics \\
          University of Oklahoma \\
          Norman, OK 73019}
\thanks{Research of the second author was partially supported by NSF grant
DMS-1160763 and NSA grant H98230-11-1-0127}\
\email{kujawa@math.ou.edu}
\author{Daniel K. Nakano}
\address{Department of Mathematics \\
          University of Georgia \\
          Athens, GA 30602}
\thanks{Research of the third author was partially supported by NSF
grants  DMS-1402271 and DMS-1701768}\
\email{nakano@math.uga.edu}
\date{\today}
\subjclass[2000]{Primary 17B56, 17B10; Secondary 13A50}
\keywords{Tensor categories, tensor triangulated categories, tensor triangular geometry, Lie superalgebras, representation theory}

\begin{abstract} Tensor triangular geometry as introduced by Balmer \cite{balmer} is a powerful idea which can be used to extract the ambient geometry from a given tensor triangulated category. In this paper we provide a general setting for a compactly generated tensor triangulated category which enables one to classify thick tensor ideals and the Balmer spectrum. For the general linear Lie superalgebra $\fg=\fg_{\0}\oplus \fg_{\1}$ we construct a Zariski space from a detecting subalgebra of $\fg$ and demonstrate that this topological space governs the tensor triangular geometry for the category of finite dimensional $\fg$-modules which are semisimple over $\fg_{\0}$. 
\end{abstract}

\maketitle

\section{Introduction}\label{S:intro}   

\subsection{} A predominant theme in representation theory is the utilization of ambient geometric structures to study
a given module category. For a symmetric monoidal tensor triangulated category, $\bK$, Balmer \cite{balmer} first introduced the idea of tensor triangular geometry and used it to show that the underlying geometry can be revealed through the use of the tensor structure. He defined the notion of a prime ideal and constructed the (Balmer) spectrum, $\operatorname{Spc}({\bK})$, thus allowing one to study these categories from the viewpoint of commutative algebra and algebraic geometry.  In particular he showed that a scheme can be reconstructed from an associated tensor triangulated category via his construction.  Another important example occurs when $\bK=\operatorname{Stab}(\text{mod}(G))$ is the stable module category of finitely generated modules for a finite group scheme $G$ over a field $k$. In this setting there exists a homeomorphism between $\operatorname{Spc}({\bK})$ and 
$\Proj( R):=\operatorname{Proj}(\operatorname{Spec}(R))$ where $R=\HH^{2\bullet}(G,k)$ is the cohomology ring for $G$. There are typically many support variety theories (support data) for $\bK$, with $\operatorname{Spc}({\bK })$ being the universal (or ``final'') support variety theory. Determining $\operatorname{Spc}({\bK})$ is closely connected to the classification of thick tensor ideals and recovers geometry hidden within ${\bK}$.  Computing the spectrum of interesting tensor triangulated categories remains one of the most fundamental questions in the subject.  See \cite{balmerICM} for further discussion of the spectrum.

Methods for classifying thick tensor ideals originated in the work of Hopkins \cite{Hop} in the context of the derived category of bounded complexes of finitely generated projective modules over a commutative Noetherian ring. Benson, Carlson, and Rickard \cite{BCR} later studied this question for the stable module category 
of a group algebra. In their work it became apparent that infinitely generated modules would play a key role in the classification.  Specifically, they provide a systematic treatment using ideas from homotopy theory and Rickard's idempotent modules to demonstrate that the thick tensor ideals for the stable module category of a finite group are in correspondence with the specialization closed sets in $\operatorname{Proj}(R)$. The description of $\operatorname{Spc}({\bK })$ can then be deduced from this fact.  In \cite{FPe:07} Friedlander and Pevtsova introduced a new approach using so-called $\pi$-points and extended the above description of $\operatorname{Spc}({\bK })$ to arbitrary finite group schemes.   A fundamental new idea recently introduced by Benson, Iyengar and Krause \cite{BIK:11, BIK:12} is that of having a commutative ring $R$ stratify the category $\bK$. This allowed them to further develop the theory while recovering the aforementioned results in the case of finite groups.

\subsection{} The authors of this paper initiated a study of classical Lie superalgebras via cohomology and support varieties in \cite{BKN1,BKN2, BKN3, BKN4}. 
Given a classical Lie superalgebra $\fg=\fg_{\0}\oplus \fg_{\1}$ over $\C$ let $G_{\0}$ be the connected reductive algebraic group with $\operatorname{Lie}\left( G_{\0}\right)=\fg_{\0}$.  It is natural to consider the category $\FF=\FF_{(\fg,\fg_{\0})}$ of finite dimensional $\fg$-modules which admit a compatible action by $G_{\0}$ and are completely reducible as $G_{\0}$-modules. The category $\FF$ enjoys many of the features found for finite group schemes except the blocks in $\FF$ can have infinitely many 
irreducible representations. One can use the fact that $\FF$ is self-injective 
together with the coproduct and counit in the enveloping algebra $U(\fg)$ to prove that $\bK=\operatorname{Stab}(\FF)$ is a tensor triangulated category.  While many aspects of the category $\FF$ are reasonably well understood, the tensor structure remains elusive (see \cite{BruCatO}); thus a natural problem is to classify the thick tensor ideals and to compute the Balmer spectrum for $\bK$.  

The cohomology ring $R=\HH^{\bullet}(\fg,\fg_{\0};\C)$ identifies with $S^{\bullet}(\fg_{\1}^{*})^{G_{\0}}$, and is thus finitely generated. 
Using this fact one can construct a cohomological support variety $V_{(\fg,\fg_{\0})}(M)$ for $M\in \FF$. 
In \cite{BKN1} using classical invariant theory we constructed two types of (classical) detecting Lie subsuperalgebras of $\fg$, denoted by $\fe$ and 
$\ff$, which can be chosen so that $\fe\leq \ff \leq \fg$. These subalgebras have the striking property that they detect the cohomology ring for $\FF$; that is, the restriction maps 
$$\HH^{\bullet}(\fg,\fg_{\0};\C)\stackrel{\res}\longrightarrow
\HH^{\bullet}(\ff,\ff_{\0};\C)^{N}\stackrel{\res}\longrightarrow \HH^{\bullet}(\fe,\fe_{\0};\C)^{W}$$ 
 yield isomorphisms as rings. Here $N$ is a non-connected reductive group and $W$ is a finite pseudoreflection group. 

For Type I  classical Lie superalgebras, Lehrer, Nakano and Zhang \cite{LNZ} proved a remarkable fact about the detecting subalgebra $\ff$. For 
$M\in \FF$, the restriction maps in cohomology, 
$\operatorname{res}:\HH^{n}(\fg,\fg_{\0};M)\to \HH^{n}(\ff,\ff_{\0};M)$ 
are  monomorphisms for all $n\geq 0$. Two consequences of this theorem are (i) given $M\in \FF$
the restriction map in cohomology induces the following isomorphisms of support varieties, 
$${V}_{(\fe,\fe_{\0})}(M)/\!/W\rightarrow {V}_{(\ff,\ff_{\0})}(M)/\!/N\rightarrow {V}_{(\fg,\fg_{\0})}(M),$$
and (ii) from these isomorphisms one can prove that $V_{(\fg,\fg_{\0})}(-)$ is a support data (see \cref{ex:cohosupport}). 

 It has been a longstanding question to find a natural geometric object for a given Lie superalgebra which governs the representation theory in a similar way as the nilpotent cone controls the representation theory for Lie algebras and quantum groups. The main goal of this paper is to reveal the ambient geometry for classical Lie superalgebras given by the Balmer spectrum. In particular, a complete picture will be given for $\fg=\mathfrak{gl}(m|n)$ through the use of the aforementioned invariant theory and detecting subalgebras. 

We note that for $\glmn$ there is a consequence of determining the thick tensor ideals which one does not have in the previously studied settings.  Namely, the 
Grothendieck group of $\FF_{(\fg,\fg_{\0})}$ is a module for $\mathfrak{gl}(\infty)$ where the action is by translation functors \cite{brundan1}.  The thick tensor ideals then yield canonical submodules in the Grothendieck group.  
    
\subsection{}       
In earlier work Goetz, Quella and Schomerus \cite{GQS} in the case of $\mathfrak {gl}(1|1)$ and $\mathfrak{sl}(2|1)$ study the classification of finite dimensional indecomposable representations and the direct computation of their tensor products.  From their calculations one can readily determine the thick tensor ideals and the Balmer spectrum of $\FF$ in this case.  For the general case of $\gl (m|n)$ when $\text{min}(m,n)\geq 2$ these computational methods are not feasible because the category $\FF$ has wild representation type.  

Furthermore, unlike in the work of \cite{BIK:12}, the cohomology ring of $\FF$ fails to stratify $\bK=\operatorname{Stab}(\FF)$ and indeed the spectrum of the cohomology ring fails to provide the Balmer spectrum (see \cref{ex:cohosupport}).   
In particular, we do not know of a commutative ring which stratifies $\bK$. This forces us to adopt a different setup in order to classify the thick tensor ideals and compute the Balmer spectrum.  We anticipate that our 
approach will be useful in other contexts where a stratifying ring is not readily available.

The classification of thick tensor ideals for classical Lie superalgebras entails establishing powerful theoretical and computational techniques.  In \cref{S:preliminaries} we follow the ideas of \cite{BIK:12, balmer} by introducing and relating three main inputs: (i) a compactly generated tensor triangulated category (TTC) $\bK$, (ii) a Zariski topological space $X$ and (iii) a support data $V$ which takes compact objects (i.e., ones in $\bK^{c}$) to closed sets in $X$. In \cref{S:classificationtheorems}  with these inputs we establish the general machinery for the classification of thick tensor ideals. It is shown that under suitable conditions one can use $V$ and $X$ to classify thick tensor ideals in $\bK^{c}$ and compute the Balmer spectrum $\operatorname{Spc}(\bK^{c})$.   As in \cite{BCR} and \cite{BIK:12} the driving forces behind the scenes are the localization functors constructed for each thick tensor ideal of the subcategory of compact objects.  Our classification theorem was proven in somewhat greater generality by Dell'Ambrogio \cite{Dell:10} (and was also announced by Pevtsova and Smith \cite{PS}).  For the convenience of the reader we include a condensed discussion of these classification results in order to introduce our conventions and notation, and to keep the paper self-contained.

In \cref{S:classicalLiesuperalgebras} we examine the representation theory of classical Lie superalgebras.   We first show that $\bK^{c}=\operatorname{Stab}(\FF)$  is the set of compact objects in  a certain compactly generated TTC, $\bK$. Examples are given in a number of cases where one can construct 
a final support data $V$ and calculate the thick tensor ideals and $\operatorname{Spc}(\bK^{c})$. These examples include the situation when $\fg$ is a detecting subalgebra $\fe$ or $\ff$, and the case  $(\fq^{+},\fg_{\0})$ where $\fq^{+}$ is the standard parabolic subalgebra of $\mathfrak{gl}(m|n)$. Furthermore, in \cref{ex:cohosupport} we show for $\fg=\mathfrak{gl}(m|n)$ that the cohomological support $V_{(\fg,\fg_{\0})}(-)$  is a support data but does not contain enough information to classify thick tensor ideals.  In particular this shows that the cohomology ring fails to stratify $\bK$.   The main reason for this is that the cohomological support data does not detect projectivity for objects of $\FF_{(\fg,\fg_{\0})}$.  We also show the associated varieties introduced by Duflo and Serganova \cite{dufloserganova} provide a support data but again fail to classify thick tensor ideals (see \cref{ex:dufloserganova}).   
 
In \cref{S:MainTheorem}, we focus on the case  $\fg=\mathfrak{gl}(m|n)$. By using the detecting subalgebra $\ff$ we construct a new support data 
using the cohomological spectrum of $\HH^{\bullet}(\ff,\ff_{\0};\C)$ which, thanks to a result of \cite{LNZ}, does detect projectivity in $\FF_{(\fg,\fg_{\0})}$. We then show that to classify thick tensor ideals with this support data, it suffices to realize the $N$-stable closed conical subvarieties of 
$\ff_{\1}$ as supports of modules in $\FF_{(\fg,\fg_{\0})}$. Once this realization result is established one obtains 
the classification via specialization closed $N$-stable conical subsets of $\ff_{\1}$ (or equivalently, specialization closed subvarieties of $N\text{-}\Proj(S^{\bullet}(\f_{\1}^{*})):=   \operatorname{Proj}(N\text{-Spec}(S^{\bullet}(\f_{\1}^{*})))$).  This in turn implies we have an explicit homeomorphism 
\[
f: N\text{-}\Proj(S^{\bullet}(\f_{\1}^{*}))\xrightarrow{\simeq} \operatorname{Spc}\left(\operatorname{Stab}\left(\FF_{(\fg,\fg_{\0})} \right) \right).
\]  The analogues of the above results are also true for the type $C$ Lie superalgebras $\mathfrak{osp}(2|2n)$.

Finally, in \cref{S:GeometricInduction} and \cref{S:VarietiesandGeometricInduction} we study the structure of modules obtained by geometric induction from a parabolic subalgebra and use this information to prove the necessary realization result used in  \cref{S:MainTheorem}.  In previously considered settings the question of realization was not a major obstacle because one can use standard constructions.  For example, for finite group schemes one can use Carlson modules to realize closed conical sets in the spectrum of the cohomology ring. However, in our case we are looking at 
closed conical sets in $\ff_{\1}$ but need to realize these using $\fg$-modules.  To address this problem we 
apply the machinery of geometric induction introduced by Penkov and Serganova to construct the required modules. Our analysis entails proving new results involving 
geometric induction and support varieties, examining carefully the group action of $N$ on $\ff_{\1}$, and 
applying intricate computational properties of the induction functor in order to solve this problem. 

\subsection{Acknowledgments} The authors would like to thank the referees of this paper. Their insightful comments and suggestions led us to implement significant changes to an earlier version of this manuscript. 

\section{Preliminaries} \label{S:preliminaries}
 
\subsection{Triangulated Categories and Compactness} Let $\bT$ be a triangulated category; then $\bT$ is additive and has 
an equivalence $\Sigma:\bT\rightarrow \bT$ called the shift. Moreover, $\bT$ is equipped with a set of distinguished triangles: 
$$M\rightarrow N \rightarrow Q \rightarrow \Sigma M$$ 
which satisfy the axioms as described in, for example, \cite[Section 1.3]{BIK:12} or \cite{Nee:01}.   

If $\bT$ is a triangulated category, an additive subcategory $\bS$ of $\bT$ is a {\em triangulated subcategory} if (i) $\bS$ is non-empty and full, 
(ii) for $M\in \bS$, $\Sigma^{n}M \in \bS$ for all $n\in {\mathbb Z}$, and (iii) if $M\rightarrow N \rightarrow Q \rightarrow \Sigma M$ is 
distinguished triangle in $\bT$ and if two objects in $\{M,N,Q\}$ are in $\bS$ then the third is in $\bS$. A triangulated subcategory 
$\bS$ is called {\em thick} if $M=M_{1}\oplus M_{2}$ in $ \bS$ implies $M_{j}\in \bS$ for $j=1,2$ (i.e., $\bS$ is closed under 
taking direct summands). For $\CC$  a collection of objects of $\bT$, define $\text{Thick}(\CC)$ as the smallest thick subcategory containing $\CC$.  

Assume that the triangulated category $\bT$ admits set indexed coproducts. 
A {\em localizing subcategory} $\bS$  of $\bT$ is a  triangulated subcategory which is closed under taking set indexed coproducts. Using a version of the Eilenberg swindle one can show that localizing subcategories are necessarily thick \cite{BIK:11}. For $\CC$ a collection of objects of $\bT$, let $\Loc_{\bT}(\CC)=\Loc (\CC )$ 
be the smallest localizing subcategory containing $\CC$. 

An object $C$ in $\bT$ is \emph{compact} if $\Hom_{\bT}(C,-)$ commutes with set indexed coproducts. Let $\bT^{c}$ denoted the full subcategory of compact objects in $\bT$. 
The triangulated category $\bT$ is \emph{compactly generated} if the isomorphism classes of compact objects form a set and if for each non-zero $M \in \bT$ there is a compact object $C$ such that $\Hom_{\bT }(C,M) \neq 0$.  It follows from \cite[Proposition 1.47]{BIK:12} that when $\bT$ is compactly generated one can find a set of compact objects, $\CC$, such that $\Loc_{\bT}(\CC)=\bT$.

\subsection{Tensor Triangulated Categories}\label{SS:TTC} For the purposes of this paper we will work with tensor triangulated categories as defined in \cite[Definition 1.1]{balmer}.  
A {\em tensor triangulated category} (TTC) is a triple $(\bK, \otimes, \unit)$ such that (i) $\bK$ is a triangulated category, and (ii) 
$\bK$ has a symmetric monodial tensor product $\otimes: \bK \times \bK \rightarrow \bK$ which is exact in each variable with unit object $\unit$.  

If $\bK$ is a TTC one can define notions prevalent in commutative algebra like prime ideal and  spectrum. 
A \emph{(tensor) ideal} in $\bK$ is a triangulated subcategory $\bI$ of $\bK$ such that $M\otimes N\in \bI$ for all $M\in \bI$ and $N\in \bK$. 
Following \cite[Definition 2.1]{balmer}, a {\em prime ideal} $\bP$ of $\bK$ is a proper thick tensor ideal such that if $M\otimes N\in \bP$ then either $M\in \bP$ or $N\in \bP$. 
The {\em Balmer spectrum} \cite[Definition 2.1]{balmer}  is defined as 
$$\operatorname{Spc}(\bK)=\{\bP \subset \bK \mid \bP\ \text{is a prime ideal}\}.$$
The topology on $\operatorname{Spc}({\bK})$ is given by closed sets of the form 
$$Z({\mathcal C})=\{\bP\in \operatorname{Spc}(\bK) \mid {\mathcal C}\cap \bP=\varnothing \}$$ 
where ${\mathcal C}$ is a family of objects in $\bK$. 

When we say $\bK$ is a compactly generated TTC we mean that $\bK$ is closed under arbitrary set indexed coproducts, the tensor product preserves set indexed coproducts, $\bK$ is compactly generated, the tensor product of compact objects is compact, that $\unit$ is a compact object, and that every compact object is rigid (i.e.\  strongly dualizable) as in, for example, \cite{HPS}.   In particular we have an exact contravariant duality functor $(-)^{*}:\bK^{c} \rightarrow \bK^{c}$ such that 
$$\Hom_{{\bK}}(N\otimes M, Q)=\Hom_{\bK}(N,M^{*}\otimes Q)$$ 
for $M\in \bK^{c}$ and $N,Q\in \bK$.

\subsection{Zariski Spaces}\label{SS:zariski}  Assume throughout this paper that $X$ is a Noetherian topological space. In this case any closed set in $X$ is the 
union of finitely many irreducible closed sets. We say that $X$ is a {\em Zariski space} if in addition any irreducible closed set $Y$ of $X$ has a unique generic point (i.e., $y\in Y$ such that $Y=\overline{\{y\}}$). For a 
Zariski space there is a bijective correspondence between points of $X$ and irreducible closed sets of $X$ given by $x\rightarrow \overline{\{x\}}$ for $x\in X$. 
The prototypical example of such a topological space is $X=\operatorname{Spec}(R)$ where $R$ is a commutative Noetherian ring. If $R$ is graded one can also consider $X=\operatorname{Proj}(R)$. 

Let $\XX$ be the collection of all subsets of $X$, $\XX_{cl}$ be the collection of all closed subsets of $X$, and $\XX_{irr}$ be the set of irreducible closed sets. Let $\YY$ be a collection of irreducible closed subsets of $X$. Following \cite[Definition 2.33]{BIK:12}, the collection $\YY$ is {\em specialization closed} if for any $B\in \YY$ and $A\subseteq B$ where $A\in \XX_{irr}$ then $A\in \YY$. Alternatively, a subset $W \subseteq X$ is {\em specialization closed} if $W=\cup_{j\in J} W_{j}$ with $W_{j}\in \XX_{cl}$. One can translate between these definitions and obtain a bijective correspondence in the following way. If $\YY$ is a collection which is specialization closed, one can take $W=\cup_{Y\in \YY}Y$ to obtain a specialization closed set in $X$. On the other hand, if $W=\cup_{j\in J} W_{j}$ is a specialization closed 
subset of $X$, let $\YY$ be the collection $\{ A\in\X_{irr} \mid A\subseteq W_{j} \text{ for some } j\in J\}$. The collection of all specialization closed subsets of $X$ will be denoted by 
$\XX_{sp}$.  

We will be interested in cases where we have an algebraic group $G$ acting rationally on a graded commutative ring $R$ by automorphisms which preserve the grading. This action induces an action of $G$ on $X=\operatorname{Proj}( R)$.  Following \cite{Lorenzpaper}, we can consider $X_{G}=G\operatorname{-Proj}( R)$ which is the set of homogeneous $G$-prime ideals of $R$. There exists a canonical map $\rho:X\twoheadrightarrow X_{G}$ with $\rho( P)=\cap_{g\in G}\ gP=:\cap_{g}\ gP$. The topology on $X_{G}$ is given by declaring $W\subseteq X_{G}$ closed if and only if $\rho^{-1}(W)$ is closed in $X$. An important property to note for us is that $\cap_{g}\ gP_{1}=\cap_{g}\ gP_{2}$ for $P_{1}, P_{2}\in X$ if and only if $\overline{G\cdot P_{1}}=\overline{G\cdot P_{2}}$ in $X$ \cite[(12)]{Lorenzpaper}. 

One can verify that if $W$ is a closed set in $X_{G}$ then $\rho^{-1}(W)$ is a $G$-invariant closed set in $X$. Furthermore, the surjectivity of $\rho$ implies that $\rho(\rho^{-1}(W))=W$. Moreover, if $V$ is a $G$-invariant closed set in $X$ then $V=\rho^{-1}(\rho(V))$ and $\rho(V)$ is closed.  We can conclude that the map $\rho$ induces a one-to-one correspondence between closed sets in $X_{G}$ and $G$-invariant closed sets in $X$. Recall that a subset of $X_{G}$ is \emph{specialization closed} if it is the union of closed sets; equivalently, if its preimage under $\rho$ is the union of $G$-invariant closed sets. Moreover, a closed subset $W$ of $X_{G}$ is irreducible if and only if $\rho^{-1}(W)$ is not the union of proper $G$-invariant closed subsets. 

We claim that $X_{G}$ is a Zariski space.  The fact that $X_{G}$ is Noetherian immediately follows from the fact that $X$ is Noetherian along with the discussion in the previous paragraph. Now let $W$ be an irreducible closed set in $X_{G}$. If $V=\rho^{-1}(W)$ then $V=\overline{G\cdot Y}$ for some irreducible closed set $Y$ because $X$ is Noetherian. Since $X$ is a Zariski space there exists $P\in X$ with $Y=\overline{\{P\}}$, and so $V=\overline{G\cdot P}$. Using the properties of $\rho$ above it follows that 
\[
W=\rho(\overline{G\cdot P})=\overline{\rho(G\cdot P)}=\overline{\{\cap_{g}\ gP\}}=\overline{\{ \rho (P)\}}.
\]
Therefore, $W$ has a generic point. In order to prove uniqueness suppose that $W=\overline{\{\cap_{g}\ gP_{1}\}}=\overline{\{\cap_{g}\ gP_{2}\}}$. It follows that 
$\rho^{-1}(W)=\overline{G\cdot P_{1}}=\overline{G\cdot P_{2}}$. By our earlier remark this implies $\cap_{g}\ gP_{1}=\cap_{g}\ gP_{2}$ and so the generic point is unique.

\subsection{Support data} \label{SS:supportdata} We recall the definition of support data as given in \cite{balmer}, and will view support data as a method to relate objects in a TTC to  subsets in a given Zariski space. Let $\bK$ be a TTC, $X$ be a Zariski space and $\XX$ be the collection of all subsets of $X$. A {\em support data} is an assignment $V:\bK\rightarrow \XX$ which satisfies the following six properties (for $M, M_{i}, N, Q \in \bK$): 
\begin{eqnlist}
\item \label{E:supportone} $V(0)=\varnothing$, $V(\unit)=X$;
\item \label{E:supporttwo} $V( \oplus_{i \in I} M_{i})=\bigcup_{i \in I} V(M_{i})$ whenever $ \oplus_{i \in I} M_{i}$ is an object of $\bK$;
\item \label{E:supportthree} $V(\Sigma M)=V(M)$;
\item \label{E:supportfour} for any distinguished triangle $M\rightarrow N \rightarrow Q \rightarrow \Sigma M$ we have 
$$V(N)\subseteq V(M)\cup V(Q);$$ 
\item \label{E:supportfive} $V(M\otimes N)=V(M)\cap V(N)$;
\end{eqnlist} 
Using the above properties and Theorem A.2.5 and Lemma A.2.6 of \cite{HPS} it is straightforward to verify that
\begin{eqnlist} 
\item \label{E:supportsix} $V(M)=V(M^{*})$ for $M\in\bK^{c}$.
\end{eqnlist} 
We will be interested in support data which satisfy an additional two properties: 
\begin{eqnlist} 
\item \label{E:supportseven} $V(M)=\varnothing$ if and only if $M=0$;
\item \label{E:supporteight} for any $W\in \XX_{cl}$ there exists  $M\in \bK^{c}$ such that $V(M)=W$ (Realization Property). 
\end{eqnlist}

We note the following lemma which says roughly that ``passing to a localizing subcategory does not increase supports.''

\begin{lemma} \label{L:localizingsubcategorysupport}
Let $\bK$ be a TTC which is closed under set indexed coproducts and admits a support data $V:\bK \to \XX$.  Let $\CC$ be a collection of objects in $\bK$ and suppose $W$ is a subset of $X$ such that $V(M)\subseteq W$ for all $M\in \mathcal C$. Then $V(M)\subseteq W$ for all $M\in\Loc(\mathcal C)$. 
%
\end{lemma}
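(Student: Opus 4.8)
The plan is to consider the full subcategory $\bS$ of $\bK$ consisting of all objects $M$ with $\V(M) \subseteq W$, and to show that $\bS$ is a localizing subcategory containing $\CC$. Since $\Loc(\CC)$ is by definition the smallest localizing subcategory containing $\CC$, this immediately gives $\Loc(\CC) \subseteq \bS$, which is the desired conclusion. By hypothesis $\CC \subseteq \bS$, so the work is entirely in verifying that $\bS$ is triangulated and closed under set indexed coproducts.

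First I would check closure under the shift: if $M \in \bS$ then $\V(\Sigma M) = \V(M) \subseteq W$ by property~\eqref{E:supportthree}, so $\Sigma M \in \bS$ (and similarly for $\Sigma^{-1}$, using that $\Sigma$ is an equivalence and applying \eqref{E:supportthree} again). Next, the two-out-of-three axiom: given a distinguished triangle $M \to N \to Q \to \Sigma M$ with two of $M, N, Q$ in $\bS$, I want the third in $\bS$. If $M, Q \in \bS$, then property~\eqref{E:supportfour} gives $\V(N) \subseteq \V(M) \cup \V(Q) \subseteq W$, so $N \in \bS$. The remaining two cases (say $M, N \in \bS$, wanting $Q \in \bS$) follow by rotating the triangle: the rotated triangles $N \to Q \to \Sigma M \to \Sigma N$ and $\Sigma^{-1} Q \to M \to N \to Q$ are again distinguished, and applying \eqref{E:supportfour} together with \eqref{E:supportthree} to the appropriate rotation reduces each case to the one already handled. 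Finally, closure under set indexed coproducts: if $M_i \in \bS$ for $i \in I$, then $\V(\bigoplus_{i \in I} M_i) = \bigcup_{i \in I} \V(M_i) \subseteq W$ by property~\eqref{E:supporttwo} (noting that $\bK$ is closed under such coproducts by hypothesis, so $\bigoplus_{i \in I} M_i$ is indeed an object of $\bK$), hence $\bigoplus_{i \in I} M_i \in \bS$. One also checks $\bS$ is nonempty (it contains $0$, since $\V(0) = \varnothing \subseteq W$ by \eqref{E:supportone}) and full (it is defined as a full subcategory). This shows $\bS$ is a localizing subcategory.

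There is no real obstacle here; the only point requiring a moment's care is the bookkeeping with rotations of distinguished triangles to get all three cases of the two-out-of-three axiom from the single inclusion \eqref{E:supportfour}, which is inherently one-directional. Once that is in place, $\bS$ is a localizing subcategory containing $\CC$, so $\Loc(\CC) \subseteq \bS$, which is exactly the assertion that $\V(M) \subseteq W$ for all $M \in \Loc(\CC)$.
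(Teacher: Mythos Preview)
Your proof is correct and takes essentially the same approach as the paper. The paper phrases it as an iterative construction of $\Loc(\CC)$ (adjoining shifts, third terms of triangles, direct summands, and coproducts, and checking at each step that supports stay inside $W$), while you use the equivalent minimality characterization by showing $\bS = \{M : \V(M) \subseteq W\}$ is itself localizing; the underlying verification via the support data axioms is identical.
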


\begin{proof}
In forming $\Loc(\mathcal C)$, we iteratively adjoin new objects to the collection by applying the following constructions:
\begin{itemize}
\item apply $\Sigma$ and $\Sigma^{-1}$ to objects in our collection
\item if two objects in a distinguished triangle are in our collection, add the third object to our collection
\item take direct summands of objects in our collection
\item take set indexed coproducts of objects in our collection.
\end{itemize}   The properties of a support data imply that the support of any such new object is still contained in $W$.
%
\end{proof}

It is useful for support computations (see \cref{S:VarietiesandGeometricInduction}) to also note that when working with closed sets in the spectrum we can instead work with the maximal ideal spectrum of $R$.    
Namely, let $R$ be a finitely generated, commutative, graded $k$-algebra with $k$ algebraically closed and $X=\operatorname{Proj}(R)$, and $X_{\max}=
\operatorname{Proj}(\operatorname{MaxSpec}(R))$. For any ideal $I$ in $R$, set $v(I)=\{P\in X \mid P\supseteq I\}$ be the closed set in $X$ defined by $I$ and let $Z(I)$ be the zero locus of $I$ in $X_{\max}$.  Let $W$ be a closed set in $X$ and consider $W_{\max}=W\cap X_{\max}$ which is a closed 
set in $X_{\max}$. Let $V: \bK \to \XX$ be a support data as in \cref{SS:supportdata}.  We claim that if there exists $M$ in $\bK$ such that $V(M)$ is closed and $V_{\max}(M):=V(M)\cap X_{\max}=W_{\max}$ then $V(M)=W$. 
Let $I$ and $J$ be ideals of $R$ such that $v(I)=W$ and $v(J)=V(M)$. Since $V_{\max}(M)=W_{\max}$ it follows that $Z(I)=Z(J)$ and, hence, the radicals of $I$ and $J$ coincide.  This then implies $v(I)=v(J)$, as claimed.  Therefore, when $X$ is the spectrum of a finitely generated commutative $k$-algebra we may verify (\ref{SS:supportdata}.\ref{E:supporteight}) for $X$ by instead verifying (\ref{SS:supportdata}.\ref{E:supporteight}) for $X_{\max}$.  In the case when a group $G$ acts on $R$  we can apply the same reasoning to see that to verify (\ref{SS:supportdata}.\ref{E:supporteight}) for $X_{G}$ it suffices to verify it for $X_{G, \max}$.  In particular, using $\rho$ we see that it suffices to realize the $G$-invariant closed subsets of $X_{\max}$.

\subsection{Extending a Support Data} \label{SS:extending} 

We also have the notion of extending a support data.  Namely, let $X$ be a Zariski space, and recall the notation $\XX$ (resp.\ $\XX_{cl}$) for the collection of all (resp.\ all closed) subsets of $X$.  Assume $\bK$ is a TTC and $V: \bK^{c} \to \XX_{cl}$ is a support data.  
\begin{defn}\label{D:extends}
We say that $\VV:\bK\rightarrow \XX$  \emph{extends} $V: \bK^{c} \to \XX_{cl}$ if 
\begin{itemize} 
\item[(i)] $\VV$ satisfies properties (\ref{SS:supportdata}.\ref{E:supportone})--(\ref{SS:supportdata}.\ref{E:supportfive}) for objects in $\bK$; 
\item[(ii)] $\VV(M)=V(M)$ for all $M\in \bK^{c}$; and
\item[(iii)]  if $V$ satisfies (\ref{SS:supportdata}.\ref{E:supportseven}) then $\VV$ satisfies (\ref{SS:supportdata}.\ref{E:supportseven}).
\end{itemize} 
\end{defn}

\section{Localization and the Classification Theorems}\label{S:classificationtheorems}

\subsection{Localization functors}\label{SS:localization} In this section we identify a key tool: the localization and colocalization 
functors as given in \cite[Section 3]{BIK:08a}.  Bousfield localization has its origins in homotopy theory, but starting with the work of Neeman and Rickard it has proven invaluable to the study of triangulated categories in representation theory and other settings.  For details about these localization functors, we refer the reader to \cite[Section 3]{BIK:08a}. The following theorem is a restatement of \cite[Theorem 2.32]{BIK:12} in our setting and, while they work only in the case of finite groups, the same proof applies here.  It was originally proven for finite groups by Rickard \cite{Rick}.  Alternatively it follows from \cite[Proposition 2.9]{Dell:10} by taking $\alpha$ to be the cardinality of a proper class.

\begin{theorem} \label{T:localizationtriangles} Let $\bK$ be a compactly generated triangulated category.  Given a thick subcategory $\bC$ of $\bK^{c}$ and an object 
$M$ in $\bK$, there exists a functorial triangle in $\bK$,
$$
\Gamma_{\bC}(M) \to M \to L_{\bC}(M) \to
$$
which is unique up to isomorphism, such that $\Gamma_{\bC}(M)$ is in $\Loc(\bC)$ and there are no non-zero maps in $\bK$ from $\bC$ or, equivalently, from $\Loc(\bC)$ to $L_{\bC}(M)$.
\end{theorem}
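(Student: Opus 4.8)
The plan is to deduce the statement from the Bousfield localization machinery referenced in the excerpt, specifically \cite[Proposition 2.16]{BIK:12}, applied to the localizing subcategory $\Loc(\bC)$ generated by the given thick subcategory $\bC$ of $\bK^c$. First I would observe that since $\bK$ is compactly generated and $\bC$ consists of compact objects, $\Loc(\bC)$ is a localizing subcategory generated by a set of compact objects; this is precisely the situation in which a colocalization functor $\Gamma_{\bC}$ and a localization functor $L_{\bC}$ exist, fitting into a functorial triangle $\Gamma_{\bC}(M) \to M \to L_{\bC}(M) \to$. The essential image of $\Gamma_{\bC}$ is $\Loc(\bC)$, which gives the first required property. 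For the orthogonality statement, the defining feature of the Bousfield localization triangle is that $L_{\bC}(M)$ lies in the right orthogonal $\Loc(\bC)^{\perp}$, i.e.\ $\Hom_{\bK}(S, L_{\bC}(M)) = 0$ for every $S \in \Loc(\bC)$; in particular there are no nonzero maps from $\bC$ to $L_{\bC}(M)$.

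Next I would address the equivalence ``from $\bC$ or, equivalently, from $\Loc(\bC)$.'' One direction is trivial since $\bC \subseteq \Loc(\bC)$. For the converse, suppose $\Hom_{\bK}(C, L_{\bC}(M)) = 0$ for all $C \in \bC$; I would argue that the class of objects $S$ with $\Hom_{\bK}(S, L_{\bC}(M)) = 0$ is a localizing subcategory of $\bK$ — it is closed under shifts (by property of triangulated categories), under the two-out-of-three property in triangles (using the long exact sequence obtained by applying $\Hom_{\bK}(-, L_{\bC}(M))$ to a distinguished triangle), under direct summands, and under set-indexed coproducts (because $\Hom_{\bK}(\coprod S_i, -) = \prod \Hom_{\bK}(S_i, -)$, which vanishes when each factor does). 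Since this localizing subcategory contains $\bC$, it contains $\Loc(\bC)$, giving the equivalence.

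Finally I would record uniqueness. Given two functorial triangles $\Gamma_{\bC}(M) \to M \to L_{\bC}(M) \to$ and $\Gamma' \to M \to L' \to$ with $\Gamma, \Gamma' \in \Loc(\bC)$ and $L, L'$ right orthogonal to $\Loc(\bC)$, the identity map on $M$ lifts uniquely to a morphism of triangles, and the induced maps $\Gamma_{\bC}(M) \to \Gamma'$ and $L_{\bC}(M) \to L'$ are isomorphisms: this is the standard argument that a Bousfield localization triangle is unique up to unique isomorphism, using that $\Hom_{\bK}(\Loc(\bC), L') = 0$ and $\Hom_{\bK}(\Loc(\bC)[-1], L) = 0$ to see that the lift exists and is unique, then invoking the five lemma (in the triangulated sense) to conclude the comparison maps are isomorphisms.

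The only genuine content here is the existence claim, which I am treating as imported from \cite[Proposition 2.16]{BIK:12} and the discussion in \cite[Section 2.2]{BIK:12}; the main thing to be careful about is verifying that $\Loc(\bC)$ is generated by a \emph{set} of compact objects so that the cited proposition applies — this is where compact generation of $\bK$ is used, since the isomorphism classes of objects of $\bK^c$ form a set and hence so do those of $\bC$. Everything else — the orthogonality reformulation, the passage from $\bC$ to $\Loc(\bC)$, and uniqueness — is formal triangulated-category bookkeeping with no real obstacle.
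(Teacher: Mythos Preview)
Your proposal is correct and follows essentially the same route as the paper. The paper treats this theorem as a restatement of \cite[Lemma 2.1]{BIK:08a}, and its (commented-out) proof outline likewise imports existence from the Bousfield localization machinery in \cite{BIK:12}: one notes that $\Loc(\bC)$ is compactly generated (since $\bC$ consists of compact objects), invokes Brown representability to obtain a right adjoint to the inclusion $\Loc(\bC)\hookrightarrow\bK$, and then appeals to \cite[Proposition 2.16]{BIK:12} for the localization triangle and its properties. Your additional verification of the equivalence between orthogonality to $\bC$ and to $\Loc(\bC)$, and your uniqueness argument, are correct formal elaborations that the paper leaves implicit.
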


We also record the following fact which will be needed in the proof of \cref{T:Hopkinstheorem}.

\begin{lemma} \label{L:gammaC}
Let $\bK$ be a compactly generated triangulated category, and $\bC$  a thick subcategory of $\bK^{c}$. Given an object $M\in\bK $,  
$$
M \in \Loc(\bC) \quad \text{if and only if} \quad \Gamma_{\bC}(M)\cong M.
$$
\end{lemma}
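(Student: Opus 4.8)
The plan is to unwind the universal properties that characterize $\Gamma_{\bC}(M)$ from \cref{T:localizationtriangles}. One direction is immediate: if $\Gamma_{\bC}(M) \cong M$, then since $\Gamma_{\bC}(M) \in \Loc(\bC)$ by the cited theorem, we get $M \in \Loc(\bC)$. So the real content is the forward direction: assuming $M \in \Loc(\bC)$, show that the natural map $\Gamma_{\bC}(M) \to M$ is an isomorphism.

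For that, I would argue via the defining triangle $\Gamma_{\bC}(M) \to M \to L_{\bC}(M) \to \Sigma\Gamma_{\bC}(M)$. The key observation is that $\Loc(\bC)$ is a triangulated (in fact localizing) subcategory containing $\Gamma_{\bC}(M)$, and by hypothesis it also contains $M$; therefore, by the two-out-of-three axiom for a triangulated subcategory, $L_{\bC}(M) \in \Loc(\bC)$ as well. But by the characterization in \cref{T:localizationtriangles}, there are no nonzero maps from $\Loc(\bC)$ to $L_{\bC}(M)$; applying this to the object $L_{\bC}(M)$ itself shows $\operatorname{id}_{L_{\bC}(M)} = 0$, hence $L_{\bC}(M) = 0$. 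Feeding $L_{\bC}(M) = 0$ back into the distinguished triangle forces $\Gamma_{\bC}(M) \to M$ to be an isomorphism (a distinguished triangle with zero third term has its first map an isomorphism — this is standard, e.g.\ from the long exact sequence obtained by applying $\Hom_{\bK}(C,-)$, or directly from the axioms).

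The steps, in order: (1) record the functorial triangle from \cref{T:localizationtriangles}; (2) handle the trivial direction ($\Gamma_{\bC}(M) \cong M \Rightarrow M \in \Loc(\bC)$); (3) for the converse, use closure of $\Loc(\bC)$ under the two-out-of-three property to deduce $L_{\bC}(M) \in \Loc(\bC)$; (4) use the Hom-vanishing to conclude $L_{\bC}(M) = 0$; (5) conclude $\Gamma_{\bC}(M) \cong M$ from the triangle.

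I do not expect a genuine obstacle here — the statement is essentially a formal consequence of \cref{T:localizationtriangles}. The only point requiring a little care is step (4): one must apply the Hom-vanishing statement "no nonzero maps from $\Loc(\bC)$ to $L_{\bC}(M)$" to the object $L_{\bC}(M)$ playing both roles, which is legitimate precisely because we have just shown $L_{\bC}(M) \in \Loc(\bC)$. If one prefers to avoid invoking the "$\Loc(\bC)$" form of the vanishing, the same conclusion follows by writing $\operatorname{id}_{L_{\bC}(M)}$ as a (co)limit-type argument, but using the equivalent formulation already supplied in \cref{T:localizationtriangles} is cleanest.
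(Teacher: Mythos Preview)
Your proposal is correct and follows essentially the same approach as the paper: both directions rest on the functorial triangle of \cref{T:localizationtriangles} together with the Hom-vanishing from $\Loc(\bC)$ into $L_{\bC}(M)$. For the forward direction the paper argues slightly differently---it notes directly that the map $M \to L_{\bC}(M)$ is zero (since $M \in \Loc(\bC)$) and concludes the first map is an isomorphism---whereas you first deduce $L_{\bC}(M)\in\Loc(\bC)$ via two-out-of-three and then kill $L_{\bC}(M)$ by applying the Hom-vanishing to the identity; your route makes the vanishing of $L_{\bC}(M)$ explicit and is arguably cleaner, but the content is the same.
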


\begin{proof}
Suppose that $M \in \Loc(\bC)$. By \cref{T:localizationtriangles}, there are no non-zero maps from $\Loc(\bC)$ to $L_{\bC}(M)$. In particular, the last map in the triangle
$$\Sigma^{-1} L_C(M) \to  \Gamma_C(M) \to  M  \xrightarrow{0} L_C(M) \to $$
must be zero. Now one can apply \cite[Corollary 1.2.7]{Nee:01} to the triangle
to deduce that $\Gamma_C(M) \cong  M \oplus \Sigma^{-1} L_C(M)$. Since there are no nonzero
maps from $\Gamma_C(M)$ to $\Sigma^{-1} L_C(M)$ it follows that $L_C(M) = 0$, and hence $\Gamma_{\bC}(M)\cong M$.

Conversely, suppose that $\Gamma_{\bC}(M)\cong M$.  In the proof of \cite[Theorem 2.32]{BIK:12}, $\Loc(\bC)$ (resp.\ $\Gamma_{\bC}$) is playing the role of $\bS$ (resp.\ $\Gamma$) in \cite[Proposition 2.16]{BIK:12}. But that result asserts that $\Im(\Gamma) = \bS$. So we can conclude that $\Im(\Gamma_{\bC}) = \Loc(\bC)$. However, 
$$\Im(\Gamma_{\bC}) := \{ N \in \bK  \mid N \cong \Gamma_{\bC}(Q) \text{ for some } Q \in \bK  \}.$$ 
Since $M \cong \Gamma_{\bC}(M)$, it follows that $M \in \Im(\Gamma_{\bC}) = \Loc(\bC)$.
\end{proof}

\subsection{Extending Support Data via Localization Functors} \label{SS:extend}  Let $X$ be a Zariski space, $\XX_{cl}$ its closed subsets, and $\XX_{sp}$ its specialization closed subsets.   
As a first application of localization and colocalization functors, given a support data on $\bK^{c}$, $V:\bK^{c}\rightarrow \XX_{cl}$, we can construct an 
assignment on $\bK$, $\VV : \bK \to \XX$, which is near to being an extension of $V$.  

\begin{defn}\label{D:abstractsupport} Let $\bK$ be a compactly generated TTC and let $V:\bK^{c}\rightarrow \XX_{cl}$ be a support data. 
\begin{itemize}
\item[(a)] Given $W\in \XX$, let $\bI_{W}$ denote the thick tensor ideal of $\bK^{c}$ given by all $M\in \bK^{c}$ such that $V(M) \subseteq W$. 

\item [(b)] Using \cref{T:localizationtriangles} set 
$$
\Gamma_{W} = \Gamma_{\bI_{W}} \quad \text{and} \quad L_{W}=L_{\bI_{W}}.
$$
Given $\varnothing\ne W\in \XX_{irr}$, let $Z=\{x \in X \mid W \not\subseteq\overline{\{x\}} \}$. Define
$$
\nabla_{W}=\Gamma_{W} L_{Z} = L_{Z} \Gamma_{W};
$$
cf.\ \cite[Proposition 6.1]{BIK:08a}.
\end{itemize} 
\end{defn}

\begin{defn}\label{D:abstractsupport2} Let $\bK$ be a compactly generated TTC, and $V:\bK^{c}\rightarrow \XX_{cl}$ be a support data. Define $\VV: \bK \to \XX$ as follows.   For $M\in\bK$ set
$$
\VV(M) =  \{x\in X \mid \nabla_{\overline{\{x\}}}(M) \ne 0 \};
$$ cf.\ \cite[Definition 2.35]{BIK:12}.
\end{defn}

The following theorem demonstrates that $\VV:\bK \rightarrow \XX$ has properties  (\ref{SS:supportdata}.\ref{E:supportone})--(\ref{SS:supportdata}.\ref{E:supportfour})  for a support data whenever (\ref{SS:supportdata}.\ref{E:supporteight}) holds for $V$. 

\begin{theorem} \label{T:AbstractSupportProperties}  Let $M,\ N,\ Q,\ M_{\a}\in\bK$ for $\a$ in some index set.
\begin{enumerate}
\item[(i)] $\VV(0) = \varnothing$.
\item[(ii)]  $\VV(M\oplus N) = \VV(M) \cup \VV(N)$, and more generally $\VV(\oplus_{\a}M_{\a}) = \bigcup_{\a}\VV(M_{\a})$.
\item[(iii)] $\VV(\Sigma  M)=\VV(M)$.
\item[(iv)] for any distinguished triangle $M\rightarrow N \rightarrow Q \rightarrow \Sigma M$, 
$$\VV(N)\subseteq \VV(M)\cup \VV(Q).$$ 
\end{enumerate}
Finally, assuming that (\ref{SS:supportdata}.\ref{E:supporteight}) holds for $V$, then 
\begin{enumerate}
\item[(v)] $\V(\unit)=X$.
\end{enumerate} 
\end{theorem}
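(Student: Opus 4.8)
The plan is to prove each of (i)--(v) by unwinding the definitions of $\nabla_W$, $\Gamma_W$, $L_Z$ and $\VV$ in \cref{D:abstractsupport} and \cref{D:abstractsupport2}, and by exploiting the functoriality of the localization/colocalization triangles from \cref{T:localizationtriangles}.

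For (i), since $\Gamma_W$ and $L_Z$ are additive functors (being compositions of (co)localization functors), they send the zero object to the zero object, so $\nabla_W(0) = 0$ for every $W$, and hence the indexing set in \cref{D:abstractsupport2} is empty, giving $\VV(0) = \varnothing$. For (iii), I would use that the colocalization triangle of \cref{T:localizationtriangles} is natural and that $\Sigma$ is an exact autoequivalence: this forces $\Gamma_{\bC}(\Sigma M) \cong \Sigma \Gamma_{\bC}(M)$ and likewise for $L_{\bC}$, hence $\nabla_W(\Sigma M) \cong \Sigma \nabla_W(M)$; since $\Sigma$ is an equivalence, $\nabla_W(\Sigma M) \ne 0 \iff \nabla_W(M) \ne 0$, and the two unions in \cref{D:abstractsupport2} agree. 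For (ii), the key point is that each of $\Gamma_{\bC}$ and $L_{\bC}$ commutes with arbitrary set-indexed coproducts: $\Gamma_{\bC}$ is a colocalization functor realized (as in the commented-out proof) as $FG$ where $F$ preserves coproducts and $G$ is a right adjoint to $F$, so $G$ also preserves coproducts because $\Loc(\bC)$ is compactly generated; then $L_{\bC}$ inherits the property from the triangle. Thus $\nabla_W(\oplus_\alpha M_\alpha) \cong \oplus_\alpha \nabla_W(M_\alpha)$, which is nonzero exactly when some summand is nonzero (a coproduct in a triangulated category vanishes iff every summand does), so the indexing sets satisfy $\{W : \nabla_W(\oplus M_\alpha) \ne 0\} = \bigcup_\alpha \{W : \nabla_W(M_\alpha) \ne 0\}$ and taking unions of the corresponding $W$'s gives the claim.

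For (iv), given a distinguished triangle $M \to N \to Q \to \Sigma M$, functoriality of the colocalization triangle produces a $3\times 3$ diagram showing that $\nabla_W(M) \to \nabla_W(N) \to \nabla_W(Q) \to$ is again a distinguished triangle (here I use that $\nabla_W = \Gamma_W L_Z$ is a composition of exact functors coming from (co)localizations, hence exact). In a triangulated category, if the outer two terms of a distinguished triangle are zero then so is the middle; contrapositively, $\nabla_W(N) \ne 0$ implies $\nabla_W(M) \ne 0$ or $\nabla_W(Q) \ne 0$. Therefore $\{W : \nabla_W(N) \ne 0\} \subseteq \{W : \nabla_W(M) \ne 0\} \cup \{W : \nabla_W(Q) \ne 0\}$, and unioning the associated irreducible closed sets yields $\VV(N) \subseteq \VV(M) \cup \VV(Q)$.

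The substantive part, and the one I expect to be the main obstacle, is (v): showing $\VV(\unit) = X$ under the assumption that $V$ satisfies the Realization Property (\ref{SS:supportdata}.\ref{E:supporteight}). It suffices to show that $\nabla_W(\unit) \ne 0$ for every nonempty $W \in \XX_{irr}$, since then $\VV(\unit) = \bigcup_{W \in \XX_{irr}} W = X$ (every point of $X$ lies in some irreducible closed set, e.g. the closure of that point, $X$ being a Zariski space). Fix such a $W$ and let $Z$ be the union of the closed sets properly contained in $W$, so $Z \subsetneq W$. Suppose for contradiction that $\nabla_W(\unit) = \Gamma_W L_Z(\unit) = 0$. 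By \cref{T:localizationtriangles} applied with $\bC = \bI_W$, $\Gamma_W L_Z(\unit) = 0$ means there are no nonzero maps from $\Loc(\bI_W)$ to $L_Z(\unit)$; combined with the colocalization triangle $\Gamma_Z(\unit) \to \unit \to L_Z(\unit) \to$ and the universal property defining $\Gamma_Z$ and $L_Z$, I would argue that $\unit$ — and hence, since $\unit \otimes (-) \cong \mathrm{id}$ and $\Gamma_Z, L_Z$ are ideal-compatible, all of $\bK$ — would be forced into $\Loc(\bI_Z)$, so in particular every compact object lies in $\bI_Z$, i.e. $V(M) \subseteq Z$ for all $M \in \bK^c$. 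But the Realization Property gives an $M \in \bK^c$ (or at least in $\bK$; one must check realization holds at the level needed) with $V(M) = W \not\subseteq Z$, a contradiction. Making this precise requires care with whether $\bI_W$ is a tensor ideal, with the interaction between $\Gamma_W$, $L_Z$ and $\otimes$, and with exactly which objects the Realization Property produces; this bookkeeping, rather than any deep new idea, is where the real work lies, and it is closely modeled on \cite[Section 2.4]{BIK:12}.
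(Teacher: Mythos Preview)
Your treatment of (i)--(iv) is correct and matches the paper's proof: all four parts follow immediately from the fact that $\nabla_{W}=\Gamma_{W}L_{Z}$ is an exact, coproduct-preserving functor, so that $\nabla_{W}$ applied to a coproduct (resp.\ a triangle, resp.\ $\Sigma M$) is the coproduct (resp.\ a triangle, resp.\ $\Sigma\nabla_{W}(M)$).

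Part (v), however, has a genuine gap. From $\nabla_{W}(\unit)=\Gamma_{W}L_{Z}(\unit)=0$ you correctly conclude that $L_{Z}(\unit)$ is $\bI_{W}$-local, but you then claim this forces $\unit\in\Loc(\bI_{Z})$. That deduction fails: knowing $L_{Z}(\unit)$ receives no maps from $\Loc(\bI_{W})$ is strictly stronger than knowing it receives no maps from $\Loc(\bI_{Z})$ (which you already knew), but it does not make $L_{Z}(\unit)$ vanish. There is nothing in the colocalization triangle $\Gamma_{Z}(\unit)\to\unit\to L_{Z}(\unit)\to$ alone that lets you pass from ``$L_{Z}(\unit)$ is $\bI_{W}$-local'' to ``$L_{Z}(\unit)=0$''. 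This is not bookkeeping; the argument as outlined is simply incomplete.

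The paper argues directly rather than by contradiction, and the key extra input is the tensor formula $\nabla_{W}(M)\cong\nabla_{W}(\unit)\otimes M$ from \cite[Corollary 8.3]{BIK:08a}. Given $W\in\XX_{irr}$, realization produces $M\in\bK^{c}$ with $V(M)=W$ (note $V$ is defined on $\bK^{c}$, so there is no ambiguity about where $M$ lives). Then $M\in\bI_{W}$ gives $\Gamma_{W}(M)\cong M$ by \cref{L:gammaC}, while $V(M)=W\not\subseteq Z$ gives $M\notin\bI_{Z}$, hence $M\notin\Loc(\bI_{Z})$ (Neeman's lemma, since $M$ is compact), hence $L_{Z}(M)\neq 0$. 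Thus $\nabla_{W}(M)=L_{Z}\Gamma_{W}(M)\cong L_{Z}(M)\neq 0$, and the tensor formula forces $\nabla_{W}(\unit)\neq 0$. Your contradiction strategy can be repaired by inserting exactly this tensor formula (if $\nabla_{W}(\unit)=0$ then $\nabla_{W}(M)=0$ for the realized $M$, contradicting the computation just given), but at that point it is the contrapositive of the paper's argument, not an alternative route.
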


\begin{proof} (i) If $M=0$ then $\nabla_{W}(M)=0$ for all $W\in \X_{irr}$, implying that $\V(M)=\varnothing$. (ii) $\nabla_{W}(M\oplus N) = \nabla_{W}(M)\oplus \nabla_{W}(N)$ since $
\nabla_{W}$ is an exact functor and, in particular, is additive (cf.\ \cite[Section 3]{BIK:08a}). Thus $\nabla_{W}(M\oplus N) \ne 0$ if and only if $\nabla_{W}(M)\ne 0$ or $\nabla_{W}(N)\ne 0$. The same argument works for arbitrary direct sums, and the result follows. 
(iii) Being exact, $\nabla_{W}$ commutes with $\Sigma$, which implies the result (cf.\ \cite[Proposition 5.1]{BIK:08a} and \cite[Section 1.3.6]{BIK:12}). (iv) By exactness of $\nabla_{W}$, $\nabla_{W}(M) \to \nabla_{W}(N) \to \nabla_{W}(Q) \to \nabla_{W}(\Sigma M)$ 
is a distinguished triangle. If $\nabla_{W}(N)\ne 0$, then at least one of $\nabla_{W}(M)\ne 0$ or $\nabla_{W}(Q)\ne 0$.

Let $x\in X$ and $W=\overline{\{x\}}$. Assuming that (\ref{SS:supportdata}.\ref{E:supporteight}) holds for $V$, for $W$ there exists $M\in\bK^{c}$ with $V(M)=W$. Since $M\in\Loc(\bI_{W})$, we have $\Gamma_{W}(M) \cong M$ 
(cf.\ Definition~\ref{D:abstractsupport} and Lemma~\ref{L:gammaC}). Letting $Z$ be as in Definition~\ref{D:abstractsupport}, we have $M\notin\bI_{Z}$. 
It follows from \cref{L:gammaC} again that $\Gamma_{\bI_{Z}}(M) \not\cong M$ and hence (from the exact triangle) that $L_{Z}(M)\ne 0$. Thus $\nabla_{W}(M) = L_{Z} \Gamma_{W}(M) \cong L_{Z}(M) \ne 0$. But $\nabla_{W}(M) 
\cong \nabla_{W}(\unit \otimes M) \cong \nabla_{W}(\unit ) \otimes M$ by \cite[Corollary 8.3]{BIK:08a}. Thus $\nabla_{W}(\unit )\ne 0$ and $x\in \V(\unit)$. This proves that $\V(\unit )=X$.
\end{proof}

Thanks to \cref{T:AbstractSupportProperties}, in order to show that $\VV$ extends $V$ it suffices to prove that 
\begin{eqnlist} 
\item \label{E:extendone1} $\VV(M \otimes N) = \VV(M) \cap \VV(N)$ for $M, N\in \bK$; 
\item \label{E:extendtwo2}  $\VV(M)=V(M)$ for all $M \in \bK^{c}$; and,
\item \label{E:extendthree3}  if $V$ satisfies (\ref{SS:supportdata}.\ref{E:supportseven}), then $\VV$ satisfies (\ref{SS:supportdata}.\ref{E:supportseven}).
\end{eqnlist}

\subsection{Hopkins' Theorem}  Given an object $M\in\bK^{c}$, let $\Tensor(M)\subseteq \bK^{c}$ be the thick tensor ideal in $\bK^{c}$ generated by $M$. 
We can also use localization to provide a general version of a theorem of Hopkins \cite{Hop} and Neeman \cite{Nee2} in the context of our setting.  Note that 
the following theorem holds in general for an arbitrary extension ${\mathcal V}$ of a support data $V$ (cf.\ Definition~\ref{D:extends}). 

\begin{theorem} \label{T:Hopkinstheorem} Let $\bK$ be a compactly generated TTC, $X$ be a Zariski space, and $\XX_{cl}$ be the closed subsets of $X$. Moreover, let $V:\bK^{c}\rightarrow \XX_{cl}$ be a 
support data satisfying (\ref{SS:supportdata}.\ref{E:supportseven}), and assume that $\VV: \bK \to \XX$ extends $V$. Fix an object $M\in\bK^{c}$, and set $W= V(M)$. 
Then $\bI_{W}=\Tensor(M)$, where $\bI_{W}$ is as in \cref{D:abstractsupport}.
\end{theorem}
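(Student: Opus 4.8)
The statement $\bI_W = \Tensor(M)$ is an equality of thick tensor ideals in $\bK^c$, so the plan is to prove the two inclusions separately. The inclusion $\Tensor(M) \subseteq \bI_W$ is the easy direction: since $V(M) = W$ and $W$ is closed (being the union of all closed sets contained in $V(M)$, which is just $V(M)$ itself since $V$ takes values in $\XX$), we have $M \in \bI_W$ by definition of $\bI_W$ in \cref{D:abstractsupport}. Since $\bI_W$ is a thick tensor ideal of $\bK^c$ and $\Tensor(M)$ is the smallest thick tensor ideal containing $M$, we conclude $\Tensor(M) \subseteq \bI_W$. The properties of a support data guarantee $\bI_W$ really is a thick tensor ideal: property (\ref{SS:supportdata}.\ref{E:supportfour}) handles triangles, (\ref{SS:supportdata}.\ref{E:supporttwo}) handles summands, and (\ref{SS:supportdata}.\ref{E:supportfive}) handles the tensor ideal condition.

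The reverse inclusion $\bI_W \subseteq \Tensor(M)$ is the heart of the matter. Fix $N \in \bI_W$, so $V(N) \subseteq W = V(M)$; I want to show $N \in \Tensor(M)$. The strategy, following the Hopkins--Neeman and Benson--Carlson--Rickard template, is to use the localization triangle for the thick subcategory $\bC = \Tensor(M)$ of $\bK^c$: by \cref{T:localizationtriangles} there is a functorial triangle $\Gamma_{\bC}(N) \to N \to L_{\bC}(N) \to$ with $\Gamma_{\bC}(N) \in \Loc(\bC)$ and no nonzero maps from $\Loc(\bC)$ to $L_{\bC}(N)$. By \cref{L:gammaC}, it suffices to show $\Gamma_{\bC}(N) \cong N$, equivalently $L_{\bC}(N) = 0$ — and then since $N$ is compact, $N \in \Loc(\Tensor(M)) \cap \bK^c = \Tensor(M)$ (using that a thick subcategory of $\bK^c$ is recovered as the compact objects of the localizing subcategory it generates, via Neeman's theorem). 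To see $L_{\bC}(N) = 0$, I would use the extension $\VV$: one first computes $\VV(L_{\bC}(N))$. Applying $\VV$ to the localization triangle and using properties (\ref{SS:supportdata}.\ref{E:supportone})--(\ref{SS:supportdata}.\ref{E:supportfour}) from \cref{T:AbstractSupportProperties} together with \cref{L:localizingsubcategorysupport} (which gives $\VV(\Gamma_{\bC}(N)) \subseteq \bigcup_{M' \in \bC} \VV(M') = \bigcup_{M' \in \Tensor(M)} V(M') \subseteq V(M) = W$, since $\Gamma_{\bC}(N) \in \Loc(\bC)$ and each generator of $\bC$ has support inside $W$), one gets $\VV(L_{\bC}(N)) \subseteq \VV(N) \cup \VV(\Gamma_{\bC}(N)) \subseteq W$.

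The key trick to finish is a tensor-nilpotence / smashing argument: show that $L_{\bC}(N)$ is a retract of something built from $N$ that becomes null after tensoring, forcing its support to be empty. Concretely, since $M \in \bC = \Tensor(M)$, we have $\Gamma_{\bC}(M) \cong M$, hence $L_{\bC}(M) = 0$; tensoring the localization triangle for $N$ with $M$ and using that $L_{\bC}$ commutes with $- \otimes M$ (the smashing property, as $\bC$ is generated by compact objects — this is the analogue of \cite[Corollary 8.3]{BIK:08a} used in the proof of \cref{T:AbstractSupportProperties}), we get $L_{\bC}(N) \otimes M \cong L_{\bC}(N \otimes M)$. But one shows $N \otimes M \in \bC$ — indeed $N \otimes M$ lies in the tensor ideal generated by $M$ — so $L_{\bC}(N \otimes M) = 0$, giving $L_{\bC}(N) \otimes M = 0$, i.e.\ $\VV(L_{\bC}(N)) \cap V(M) = \varnothing$ by property (\ref{SS:supportdata}.\ref{E:supportfive}) for $\VV$. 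Combined with $\VV(L_{\bC}(N)) \subseteq W = V(M)$ from the previous paragraph, this forces $\VV(L_{\bC}(N)) = \varnothing$, hence $L_{\bC}(N) = 0$ by property (\ref{SS:supportdata}.\ref{E:supportseven}) for $\VV$ (which holds since $\VV$ extends $V$ and $V$ satisfies (\ref{SS:supportdata}.\ref{E:supportseven})). Therefore $\Gamma_{\bC}(N) \cong N$, so $N \in \Loc(\bC)$, and compactness of $N$ gives $N \in \Tensor(M)$.

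\textbf{Main obstacle.} The delicate point is the smashing/tensor-nilpotence step: justifying that $L_{\bC}$ commutes with $- \otimes M$ and that tensoring kills $L_{\bC}(N)$ requires knowing that the localization functor associated to the compactly generated ideal $\bC$ is smashing and that $\Gamma_{\bC}(\unit)$ is an idempotent algebra through which everything factors — this is where the rigidity of compact objects and the compact generation hypotheses are really used, and where one must invoke (as in the proof of \cref{T:AbstractSupportProperties}) the results of \cite{BIK:08a} on tensoring with $\Gamma$ and $L$. A secondary technical point is the identification $\Loc(\Tensor(M)) \cap \bK^c = \Tensor(M)$, i.e.\ that no new compact objects are created by passing to the localizing subcategory; this is Neeman's theorem on compactly generated localizing subcategories, valid because $\Tensor(M) \subseteq \bK^c$ consists of compact objects and $\bK$ is compactly generated.
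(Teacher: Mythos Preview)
Your proposal is correct and follows the same overall strategy as the paper: the easy inclusion is identical, and for the hard inclusion both arguments use the localization triangle for $\bC=\Tensor(M)$, show the relevant ``error term'' has $\VV$-support contained in $W$, kill it by tensoring with $M$ (or $M^{*}$), and finish with Neeman's \cite[Lemma 2.2]{Nee} to pass from $\Loc(\Tensor(M))$ back to $\Tensor(M)$.

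There are two minor differences in execution worth noting. First, the paper applies the $\Tensor(M)$-localization triangle to $\Gamma_{\bI_{W}}(N)$ rather than to $N$ itself; your more direct version (applying it to $N$ and using $\VV(N)=V(N)\subseteq W$) works equally well and is arguably cleaner. Second, and more substantively, for the vanishing step the paper avoids the smashing machinery: instead of invoking $L_{\bC}(N)\otimes M\cong L_{\bC}(N\otimes M)$, it uses rigidity of compacts directly---for every compact $S$ one has $S\otimes M\in\Tensor(M)$, hence $\Hom(S\otimes M,L_{\bC}(N))=0$, so by duality $\Hom(S,M^{*}\otimes L_{\bC}(N))=0$, and compact generation forces $M^{*}\otimes L_{\bC}(N)=0$. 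This uses only the axioms listed in \cref{SS:TTC} and property (\ref{SS:supportdata}.\ref{E:supportsix}), whereas your smashing argument, while correct, requires importing the fact (from \cite{BIK:08a} or Hovey--Palmieri--Strickland) that localization at a compactly generated tensor ideal is smashing. Both routes are valid; the paper's is more self-contained within the stated hypotheses.
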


\begin{proof} We apply the strategy presented in \cite[Theorem 2.39]{BIK:12} which in turn is based upon the analogous result in \cite{BCR}. For brevity we set $\bI=\bI_{W}$ and $ \bI'=\Tensor(M)$. 

\medskip
\noindent ($\supseteq$) Since $\Tensor(M)$ is the smallest thick tensor ideal of $\bK^{c}$ containing $M$, from the definition of $\bI_{W}$ it follows that $\bI \supseteq \bI'$.

\medskip
\noindent ($\subseteq$) Let $N\in\bK$. Apply the exact triangle of functors $\Gamma_{\bI'}\to\text{Id} \to L_{\bI'}\to\ $ to $\Gamma_{\bI}(N)$:
\begin{equation} \label{E:ExactTriangle}
\Gamma_{\bI'}\Gamma_{\bI}(N) \to \Gamma_{\bI}(N) \to L_{\bI'} \Gamma_{\bI}(N) \to
\end{equation}
Since $\bI' \subseteq \bI$, the first term belongs to $\Loc(\bI') \subseteq \Loc(\bI)$. The second term also belongs to $\Loc(\bI)$, a triangulated subcategory, and hence so does the third term: $L_{\bI'}\Gamma_{\bI}(N) \in \Loc(\bI)$. Using  \cref{L:localizingsubcategorysupport}, we deduce that $\VV(L_{\bI'}\Gamma_{\bI}(N)) \subseteq W$.

By \cref{T:localizationtriangles} (applied to $\bI'$), there are no non-zero maps from $\bI'$ to $L_{\bI'}\Gamma_{\bI}(N)$. Thus, for any object $S\in\bK^{c}$, the duality property implies 
that 
\begin{equation} \label{E:ExactNTriangle}
0=\Hom_{\bK}(S\otimes M, L_{\bI'}\Gamma_{\bI}(N)) \cong \Hom_{\bK}(S, M^{*}\otimes L_{\bI'}\Gamma_{\bI}(N)).
\end{equation}
Since $\bK$ is compactly generated it follows that 
$M^{*}\otimes L_{\bI'}\Gamma_{\bI}(N)=0$ in $\bK$. Hence, 
\begin{align*}
\varnothing &= \VV(M^{*}\otimes L_{\bI'}\Gamma_{\bI}(N)) \\
  &= \VV(M) \cap \VV(L_{\bI'}\Gamma_{\bI}(N)) \\
  &= W \cap \VV(L_{\bI'}\Gamma_{\bI}(N)) \\
  &= \VV(L_{\bI'}\Gamma_{\bI}(N)),
\end{align*}
by Definition~\ref{D:extends}, (\ref{SS:supportdata}.\ref{E:supportsix}), and the observation at the end of the previous paragraph. Thus, by Definition~\ref{D:extends}(iii), $L_{\bI'}\Gamma_{\bI}(N) =0$ in $\bK$. 
By \cref{E:ExactTriangle} it follows that $\Gamma_{\bI}(N) \cong \Gamma_{\bI'}\Gamma_{\bI}(N)$.

Now specialize to $N\in\bI$. Then, using \cref{L:gammaC} twice, we have $\Gamma_{\bI}(N) \cong N$, so $\Gamma_{\bI'}(N) \cong N$, whence $N\in\operatorname{Loc}\left( \bI'\right)$. Applying \cite[Lemma 2.2]{Nee} we see that in fact $N \in \bI'$.  This shows $\bI\subseteq\bI'$ and completes the proof.
\end{proof}

\subsection{Classifying Thick Tensor Ideals in a TTC} \label{SS:classifying} The following theorem provides a classification of the thick tensor ideals of compact objects in our setting.  Dell'Ambrogio proved a somewhat more general version of this result \cite[Theorem 1.5]{Dell:10}.

\begin{theorem} \label{I:bijectiongeneral} Let $\bK$ be a compactly generated TTC.  Let $X$ be a Zariski space and let
$V:\bK^{c} \rightarrow \XX_{cl}$ be a support data defined on $\bK^{c}$ satisfying the additional conditions (\ref{SS:supportdata}.\ref{E:supportseven}) and (\ref{SS:supportdata}.\ref{E:supporteight}).  Moreover, assume $\VV: \bK \to \XX$ extends $V$. 

Given the above setup there is a pair of mutually inverse maps
$$
\{\text{thick tensor ideals of $\bK^{c}$}\} \begin{array}{c} {\Gamma} \atop {\longrightarrow} \\ {\longleftarrow}\atop{\Theta} \end{array}  \XX_{sp},
$$
given by 
\begin{align*}
\Gamma(\bI) & =\bigcup_{M\in \bI} V(M),\\
 \Theta(W) & = \bI_{W},
\end{align*}
where $\bI_{W}=\{M\in \bK^{c} \mid V(M)\subseteq W \}$ as in \cref{D:abstractsupport}. 
\end{theorem}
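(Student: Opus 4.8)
The plan is to establish that $\Gamma$ and $\Theta$ are mutually inverse, order-preserving bijections by showing two composition identities: $\Gamma \circ \Theta = \operatorname{id}_{\XX_{sp}}$ and $\Theta \circ \Gamma = \operatorname{id}$ on thick tensor ideals. First I would verify that both maps are well defined: for a thick tensor ideal $\bI$ of $\bK^{c}$, the set $\Gamma(\bI) = \bigcup_{M \in \bI} V(M)$ is a union of closed sets by (\ref{SS:supportdata}.\ref{E:supportseven})--(\ref{SS:supportdata}.\ref{E:supportsix}) applied to $V$, hence specialization closed; and conversely, for $W \in \XX_{sp}$, $\bI_{W} = \{M \in \bK^{c} \mid V(M) \subseteq W\}$ is a thick tensor ideal of $\bK^{c}$, which follows from the support data axioms exactly as in \cref{D:abstractsupport}(a). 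Both $\Gamma$ and $\Theta$ are visibly inclusion-preserving.

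For $\Gamma \circ \Theta = \operatorname{id}$: fix $W \in \XX_{sp}$ and show $\bigcup_{M \in \bI_{W}} V(M) = W$. The inclusion ``$\subseteq$'' is immediate from the definition of $\bI_{W}$. For ``$\supseteq$'', I would use the Realization Property (\ref{SS:supportdata}.\ref{E:supporteight}): write $W = \bigcup_{j} W_{j}$ with each $W_{j} \in \XX$ closed, and for each $j$ produce (via the Realization Property applied to $\bK^{c}$, using the observation at the end of \cref{SS:supportdata} if we need to pass through the maximal spectrum) an object $M_{j} \in \bK^{c}$ with $V(M_{j}) = W_{j}$; then $M_{j} \in \bI_{W}$ and $W_{j} = V(M_{j}) \subseteq \bigcup_{M \in \bI_{W}} V(M)$, giving the reverse inclusion after taking the union over $j$. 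One subtlety: a general specialization closed set is a union of closed sets but need not be realized by a single object, so the argument must genuinely proceed closed-piece by closed-piece, which is why only the union over all of $\bI_{W}$ (not a single module) recovers $W$.

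For $\Theta \circ \Gamma = \operatorname{id}$: fix a thick tensor ideal $\bI$ of $\bK^{c}$ and show $\bI_{\Gamma(\bI)} = \bI$. The inclusion $\bI \subseteq \bI_{\Gamma(\bI)}$ is trivial since $V(M) \subseteq \Gamma(\bI)$ for every $M \in \bI$. For the reverse inclusion, take $N \in \bI_{\Gamma(\bI)}$, i.e.\ $V(N) \subseteq \Gamma(\bI) = \bigcup_{M \in \bI} V(M)$. Here is where Hopkins' Theorem (\cref{T:Hopkinstheorem}) does the heavy lifting: since $X$ is Noetherian, the closed set $V(N)$ is covered by finitely many of the closed sets $V(M_{1}), \dots, V(M_{k})$ with $M_{i} \in \bI$ (using that each $V(M)$ is closed and $V(N)$ is Noetherian, so irreducible-component considerations reduce the cover to a finite subcover); replacing $M_{1}, \dots, M_{k}$ by $M := M_{1} \oplus \cdots \oplus M_{k} \in \bI$ and using (\ref{SS:supportdata}.\ref{E:supporttwo}) we get $V(N) \subseteq V(M)$. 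Now $\Tensor(N) \subseteq \Tensor(M)$ as thick tensor ideals: indeed \cref{T:Hopkinstheorem} identifies $\Tensor(N) = \bI_{V(N)}$ and $\Tensor(M) = \bI_{V(M)}$, and $V(N) \subseteq V(M)$ gives $\bI_{V(N)} \subseteq \bI_{V(M)}$. Hence $N \in \Tensor(N) \subseteq \Tensor(M) \subseteq \bI$, the last inclusion because $\bI$ is a thick tensor ideal containing $M$. This shows $\bI_{\Gamma(\bI)} \subseteq \bI$ and completes the proof.

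The main obstacle I anticipate is the reduction to a finite subcover in the $\Theta \circ \Gamma$ direction and the correct invocation of \cref{T:Hopkinstheorem}: one must be careful that $V(N) \subseteq \bigcup_{M \in \bI} V(M)$ with a possibly infinite union can be refined to a finite union of the $V(M)$'s — this uses that $V(N)$ is a Noetherian (hence quasi-compact) closed subspace and each $V(M)$ is closed, together with the fact that finite direct sums stay inside $\bI$ by (\ref{SS:supportdata}.\ref{E:supporttwo}) and the ideal property. Everything else is a formal consequence of the support data axioms, \cref{L:localizingsubcategorysupport}, and \cref{T:Hopkinstheorem}, which in turn relies on $\VV$ extending $V$ as hypothesized.
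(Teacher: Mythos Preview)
Your proposal is correct and follows the same overall strategy as the paper. The one point that needs sharpening is the reduction to a finite subcover in the $\Theta \circ \Gamma$ direction: you cannot appeal to quasi-compactness, since the $V(M)$'s are \emph{closed} sets, and quasi-compactness only yields finite subcovers of open covers. The paper handles this using the Zariski space hypothesis: decompose $V(N)$ into finitely many irreducible components $W_{1}, \dots, W_{n}$ (Noetherian); each $W_{i}$ has a unique generic point $x_{i}$, and since $x_{i} \in \Gamma(\bI) = \bigcup_{M \in \bI} V(M)$ there exists $M_{i} \in \bI$ with $x_{i} \in V(M_{i})$; as $V(M_{i})$ is closed, $W_{i} = \overline{\{x_{i}\}} \subseteq V(M_{i})$. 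Setting $M = \bigoplus_{i=1}^{n} M_{i} \in \bI$ then gives $V(N) \subseteq V(M)$ as desired. Your phrase ``irreducible-component considerations'' points in the right direction, but the generic-point step is precisely where the Zariski hypothesis on $X$ enters, and it should be made explicit. With that correction, your argument and the paper's coincide.
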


\begin{proof} Observe that by (\ref{SS:supportdata}.\ref{E:supporttwo})--(\ref{SS:supportdata}.\ref{E:supportfive}), $\bI_{W}$ is a thick tensor ideal of $\bK^{c}$.  

(i) We first show that $\Gamma \circ \Theta$ is the identity. Observe that 
$$
\Gamma(\Theta(W)) = \Gamma(\bI_{W}) = \bigcup_{M\in\bI_{W}} V(M) \subseteq W,
$$
where the final inclusion follows from definition of $\bI_{W}$.

For the reverse inclusion, write $W=\bigcup_{j\in J} W_{j}$ for some index set $J$ and closed subsets $W_{j}\in \XX$. By (\ref{SS:supportdata}.\ref{E:supporteight}), there exist objects $N_{j}\in \bK^{c}$ such that $V(N_{j}) = W_{j}$ for 
$j\in J$. Then $N_{j}\in \bI_{W}$ so $W \subseteq \bigcup_{M\in\bI_{W}} V(M)$. Hence, $\Gamma(\Theta(W))=W$.

(ii) Now we show that $\Theta \circ \Gamma$ is the identity. Given a thick tensor ideal $\bI$, set $W=\Gamma(\bI) = \bigcup_{M\in\bI} V(M)$. Then
$$
\Theta(\Gamma(\bI)) = \Theta(W)=\bI_{W}\supseteq \bI.
$$

For the reverse inclusion, let $N\in\bI_{W}$, so $V(N) \subseteq W$. Since $X$ is Noetherian, $V(N)=W_{1}\cup\dots\cup W_{n}$, where the $W_{i}$ are the irreducible components of $V(N)$. 
Each $W_{i}$ has a generic point $x_{i}$ with $\overline{\{x_{i}\}}=W_{i}$, and since $W_{i}\subseteq W$, $x_{i}\in W$. By definition of $W$, there  exists $M_{i}\in\bI$ such that $x_{i}\in V(M_{i})$. Since $V(M_{i})$ 
is closed, $W_{i}\subseteq V(M_{i})$. Set $M:= \bigoplus_{i=1}^{n}M_{i}\in\bI$. Then
$$
V(N) \subseteq \bigcup_{i=1}^{n}V(M_{i}) = V(M) \subseteq W.
$$

We claim that $N\in\Tensor(M)$. Since $\bI$ is a thick tensor ideal containing $M$, clearly $\Tensor(M)\subseteq\bI$, so this assertion will complete the proof of the  inclusion $\bI_{W}\subseteq\bI$, and thus of the 
theorem.

To prove the claim, we use Theorem~\ref{T:Hopkinstheorem}. Namely, we have the equality
$\Tensor(M) = \bI_{Z}$, where 
$$
Z =V(M).
$$
But $V(N)\subseteq Z$ since $V(N)\subseteq V(M)$, so $N\in\bI_{Z}=\Tensor(M)$ as claimed.
\end{proof}

\subsection{Computing the Balmer Spectrum} \label{SS:balmer}

We can now prove that the Balmer spectrum of $\bK^{c}$ is homeomorphic to $X$ in our setup.  

\begin{theorem} \label{K:bijectiongeneral} Let $\bK$ be a compactly generated TTC and let $X$ be a Zariski space. Assume that  
$V:\bK^{c} \rightarrow \XX_{cl}$ is a support data defined on $\bK^{c}$ satisfying the additional conditions (\ref{SS:supportdata}.\ref{E:supportseven}) and (\ref{SS:supportdata}.\ref{E:supporteight}).  Further assume that we have a support data $\VV: \bK \to \XX$ which extends $V$. Then there is a homeomorphism
$$
f: X  \to \operatorname{Spc}(\bK^{c}).
$$
\end{theorem}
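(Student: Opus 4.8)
The plan is to construct the homeomorphism $f$ explicitly using the classification of thick tensor ideals from Theorem~\ref{I:bijectiongeneral}, together with Balmer's dictionary relating points of $\operatorname{Spc}(\bK^{c})$ to certain thick tensor ideals. Recall that a point $\bP \in \operatorname{Spc}(\bK^{c})$ is a prime thick tensor ideal, and that for a Zariski space the generic points of irreducible closed sets are in bijection with those closed sets. So I would define $f$ on a point $x \in X$ by setting $f(x) = \bP_x$, where $\bP_x$ is the thick tensor ideal corresponding under $\Theta$ to the specialization closed set $X \setminus \{ x \}^{\circ}$ — more precisely, to the union of all irreducible closed subsets of $X$ not containing $x$. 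Equivalently, following Balmer, $f(x) = \{ M \in \bK^{c} \mid x \notin V(M) \}$. The first step is to check that this $\bP_x$ is indeed a prime ideal: it is a thick tensor ideal by Theorem~\ref{I:bijectiongeneral} (it is $\bI_W$ for $W$ the specialization closed set just described), it is proper since $V(\unit) = X \ni x$ by (\ref{SS:supportdata}.\ref{E:supportone}), and the prime condition $M \otimes N \in \bP_x \Rightarrow M \in \bP_x$ or $N \in \bP_x$ follows immediately from (\ref{SS:supportdata}.\ref{E:supportfive}): if $x \notin V(M \otimes N) = V(M) \cap V(N)$ then $x$ lies outside one of them.

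Next I would establish that $f$ is a bijection. Surjectivity: given a prime ideal $\bP$, set $W = \Gamma(\bP) = \bigcup_{M \in \bP} V(M) \in \XX_{sp}$; by Theorem~\ref{I:bijectiongeneral}, $\bP = \bI_W$, and since $\bP$ is proper, $W \neq X$. The complement $X \setminus W$ is nonempty; I claim the primeness of $\bP$ forces $X \setminus W$ to have a unique generic point $x$, so that $W$ is exactly the specialization closed set associated to $x$ above and hence $\bP = \bP_x = f(x)$. The point here is a standard argument: if $X \setminus W$ contained two distinct closed points of $X/{\sim}$ (or, more carefully, if $W$ were not of the required form), one produces compact objects $M, N$ with $V(M), V(N) \subseteq$ (suitable closed sets), $V(M) \cap V(N) \subseteq W$ but neither $V(M)$ nor $V(N)$ contained in $W$ — using the Realization Property (\ref{SS:supportdata}.\ref{E:supporteight}) to build $M, N$ — contradicting primeness. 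Injectivity: if $f(x) = f(y)$ then $\{ M \mid x \notin V(M)\} = \{ M \mid y \notin V(M)\}$; applying $\Gamma$ shows the associated specialization closed sets coincide, and since $X$ is a Zariski space (generic points are unique) this gives $x = y$.

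Then I would check that $f$ is a homeomorphism. Balmer's topology on $\operatorname{Spc}(\bK^{c})$ has as a basis of closed sets the supports $\operatorname{supp}(M) = \{ \bP \mid M \notin \bP \}$ for $M \in \bK^{c}$; under $f$, the preimage $f^{-1}(\operatorname{supp}(M)) = \{ x \in X \mid M \notin \bP_x \} = \{ x \mid x \in V(M) \} = V(M)$, which is closed in $X$ by hypothesis. Conversely every closed subset of $X$ is, by Noetherianity and (\ref{SS:supportdata}.\ref{E:supporteight}), of the form $V(M)$ for some $M \in \bK^{c}$ (a finite union of closed sets, each realized, then take the direct sum and use (\ref{SS:supportdata}.\ref{E:supporttwo})), so $f$ and $f^{-1}$ are both continuous and closed, hence $f$ is a homeomorphism. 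Alternatively, one can invoke Balmer's universal property of the spectrum: a support data $V$ on $\bK^{c}$ with values in the closed subsets of a Zariski space $X$ induces a continuous map $X \to \operatorname{Spc}(\bK^{c})$, and one shows it is a homeomorphism precisely when $V$ classifies thick tensor ideals, which is Theorem~\ref{I:bijectiongeneral}.

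The main obstacle I expect is the surjectivity step — specifically, verifying that a prime ideal's associated specialization closed set $W$ is of the form "everything not specializing to a single point $x$." This is where the Zariski space hypothesis (unique generic points) and the Realization Property are both essential, and the argument requires care: one must rule out the possibility that $X \setminus W$ fails to be the closure-complement of a single point, which amounts to showing that if $W = W_1 \cup W_2$ is not "irreducible from the complement side" then $\bP = \bI_W$ is not prime. This is exactly the kind of reasoning used in \cite[Theorem~2.39 and its consequences]{BIK:12} and in \cite{balmer}, and I would model it on those arguments; everything else is a routine unwinding of definitions given the machinery already in place.
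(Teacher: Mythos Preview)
Your proposal is correct, and in fact your final ``alternatively'' paragraph is exactly the route the paper takes: the paper simply observes that $V$ is a support data (so Balmer's universal property gives a continuous map $f:X\to\operatorname{Spc}(\bK^{c})$), that Theorem~\ref{I:bijectiongeneral} together with the Zariski hypothesis makes $(\XX,V)$ a \emph{classifying support data} in Balmer's sense, and then invokes \cite[Theorem~5.2]{balmer} to conclude that $f$ is a homeomorphism. The one point the paper makes explicit that you do not is that Balmer's framework classifies \emph{radical} thick tensor ideals, so one must note that in $\bK^{c}$ every thick tensor ideal is radical; this follows from rigidity/duality via \cite[Remark~4.3, Proposition~4.4]{balmer}.

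Your main argument, by contrast, reproves Balmer's Theorem~5.2 by hand in this setting: you write down $f(x)=\{M\mid x\notin V(M)\}$, verify primeness, and establish bijectivity and bicontinuity directly. This is a genuinely different (more self-contained) route. What it buys you is independence from the radical-ideal subtlety---since you work directly with primes rather than passing through Balmer's classification of radical ideals, you never need to invoke the duality argument. What the paper's approach buys is brevity: three sentences instead of three paragraphs, at the cost of outsourcing the real work to \cite{balmer}. Your identification of surjectivity as the crux is accurate; the argument you sketch (if $X\setminus W$ is not irreducible-with-generic-point, use realization to build $M,N$ witnessing failure of primeness) is the standard one and goes through.
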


\begin{proof}  Since $V:\bK^{c} \rightarrow \XX_{cl}$ is a support data the universal property of the Balmer spectrum implies that there is a continuous map $f: X  \to \operatorname{Spc}(\bK^{c})$ as defined in \cite[Theorem 3.2]{balmer}.  By assumption $X$ is Zariski and by \cref{I:bijectiongeneral} we know that $V$ induces a bijection between the thick tensor ideals of $\bK^{c}$ and the specialization closed subsets of $X$ (noting that since $\bK^{c}$ is assumed to have a duality we have that the radical thick tensor ideals are exactly the thick tensor ideals by \cite[Remark 4.3 and Proposition 4.4]{balmer}).  Thus the pair $(X,V)$ is a classifying support data as defined in \cite[Definition 5.1]{balmer}.  The results in \cite[Theorem 5.2]{balmer} then implies that $f$ is a homeomorphism.
\end{proof}

\section{Classical Lie Superalgebras} \label{S:classicalLiesuperalgebras} 

\subsection{}\label{SS:notationandconventions} We now apply the technology of the previous sections to the representations of classical Lie superalgebras.  We start with 
the notation and conventions in \cite{BKN1, BKN2, BKN3, LNZ}. Let $\fa=\fa_{\0}\oplus \fa_{\1}$ be a 
Lie superalgebra over the complex numbers, $\C$, with Lie bracket $[\;,\;]:\fa \otimes \fa \to \fa$. A finite dimensional Lie superalgebra
$\fa$ is called \emph{classical} if there is a connected reductive algebraic group $A_{\0}$ such that $\operatorname{Lie}(A_{\0})=\fa_{\0}$ and an action of $A_{\0}$ on $\fa_{\1}$ which differentiates to the adjoint action of $\fa_{\0}$ on $\fa_{\1}.$ In what follows all Lie superalgebras will be assumed to be classical.

Let $U(\fa)$ be the universal enveloping superalgebra of $\fa$. If $M$ and $N$ are $\fa$-supermodules (equivalently 
$U(\fa)$-supermodules) one can use the coproduct and antipode of $U(\fa)$ to define an $\fa$-supermodule
structure on the tensor product $M\otimes N$ and, when $M$ is finite dimensional, the dual $M^{*}$. In this context we will use the term $\fa$-module to mean 
$\fa$-supermodule. Furthermore, in what follows we will only consider the underlying even category in which the Hom sets consist of morphisms of $\fa$-modules 
which preserve the $\Z_{2}$-grading. 

Let $\FF_{(\fa,\fa_{\0})}$ be the full subcategory of
finite dimensional $\fa$-modules which have a compatible action by $A_{\0}$ and are completely reducible over $A_{\0}$.  The category $\FF:=\FF_{(\fa,\fa_{\0})}$ has enough injective
(and projective) modules and is a Frobenius category (cf.\ \cite[Proposition 2.2.2]{BKN3}). Given $M, N$ in $\FF$, let $\Ext_{\mathcal{F}}^{d}(M,N)$ be the degree $d$
extensions between $N$ and $M$. These extension groups can be realized via relative Lie superalgebra cohomology for the pair
$(\fa,\fa_{\0})$: 
$$\Ext_{\mathcal{F}}^{d}(M,N)\cong \Ext^{d}_{(\fa,\fa_{\0})}(M,N)\cong \HH^{d}(\fa,\fa_{\0}; M^{*}\otimes N).$$
There exists a Koszul type resolution which can be used to calculate $\HH^{d}(\fa,\fa_{\0}; M^{*}\otimes N)$. 

\subsection{} Let  $\CC_{(\fa,\fa_{\0})}$ be the category of all $\fa$-modules which are finitely semisimple over $A_{\0}$ 
(i.e., the objects in $\CC_{(\fa,\fa_{\0})}$ have a compatible $A_{\0}$ action and as $A_{\0}$-modules they decompose into a direct sum of finite dimensional simple modules).  
The category $\FF_{(\fa,\fa_{\0})}$ is a full subcategory of $\CC_{(\fa,\fa_{\0})}$. Since the projectives, injectives, and simple modules 
in $\CC_{(\fa,\fa_{\0})}$ are the same as in $\F_{(\fa,\fa_{\0})}$, it follows that $\CC_{(\fa,\fa_{\0})}$ is also a Frobenius category. 

Let $\bK=\operatorname{Stab}(\CC_{(\fa,\fa_{\0})})$ be the stable module category of $\CC_{(\fa,\fa_{\0})}$.  That is, it is the quotient category of  $\CC_{(\fa,\fa_{\0})}$ given by identifying maps whose difference factors through a projective module.  Since the projective and injective modules in $\CC_{(\fa,\fa_{\0})}$ coincide it follows that $\bK$ is a triangulated category. See \cite[Section 1.3.5]{BIK:12} for further details. Using the coproduct and augmentation maps on $U(\fg)$ it follows that $\bK$ is a tensor triangulated category. We also note 
that $\operatorname{Stab}(\F_{(\fa,\fa_{\0})})$ is a tensor triangulated subcategory of $\bK$. We now verify that $\bK$ is compactly generated 
and $\bK^{c}=\operatorname{Stab}(\F_{(\fa,\fa_{\0})})$. 

\begin{prop} Let $\bK = \operatorname{Stab}(\CC_{(\fa,\fa_{\0})})$. 
\begin{itemize} 
\item[(a)] $\bK$ is generated by $\operatorname{Stab}(\F_{(\fa,\fa_{\0})})$.
\item[(b)] $\bK^{c}= \operatorname{Stab}(\F_{(\fa,\fa_{\0})})$. 
\end{itemize} 
In particular, $\bK$ is compactly generated.
\end{prop}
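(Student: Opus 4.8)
The plan is to establish (a) first, since (b) will follow from (a) together with standard facts about compactly generated triangulated categories. For (a), the goal is to show that $\operatorname{Loc}_{\bK}(\operatorname{Stab}(\F_{(\fa,\fa_{\0})})) = \bK$. Since every module in $\CC_{(\fa,\fa_{\0})}$ decomposes as a direct sum of finite-dimensional $A_{\0}$-simple modules, and since any finite-dimensional $\fa$-submodule generated over $U(\fa)$ by finitely many such simples lies in $\F_{(\fa,\fa_{\0})}$, one sees that every object of $\CC_{(\fa,\fa_{\0})}$ is a filtered colimit (indeed a directed union) of finite-dimensional $\fa$-submodules belonging to $\F_{(\fa,\fa_{\0})}$. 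The key point is then to upgrade such a directed union in the module category to a coproduct statement in the stable category: writing $M = \bigcup_{i} M_{i}$ with $M_{i} \in \F_{(\fa,\fa_{\0})}$ increasing, one argues (as is standard, cf.\ the group algebra case) that $M$ is a direct summand of the homotopy colimit $\operatorname{hocolim} M_{i}$, which by construction sits in $\operatorname{Loc}_{\bK}$ of the $M_{i}$ and hence in $\operatorname{Loc}_{\bK}(\operatorname{Stab}(\F_{(\fa,\fa_{\0})}))$. Since localizing subcategories are thick, the summand $M$ lies there as well. This proves (a).

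For (b), I would first check that every object of $\operatorname{Stab}(\F_{(\fa,\fa_{\0})})$ is compact in $\bK$: if $C$ is finite-dimensional then $\underline{\operatorname{Hom}}_{\bK}(C, -)$ is computed (in each fixed cohomological degree, via a complete resolution) by a finite-dimensional-source Hom, and a finite-dimensional module, being a finitely presented $U(\fa)$-module in the relevant sense, makes $\operatorname{Hom}$ commute with the coproducts that appear; so $C \in \bK^{c}$. For the reverse inclusion, one invokes the general principle that in a compactly generated triangulated category $\bK$ with a set of compact generators $\mathcal G$, one has $\bK^{c} = \Thick(\mathcal G)$ (Neeman's theorem). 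Here by (a) the set of (isomorphism classes of) objects of $\operatorname{Stab}(\F_{(\fa,\fa_{\0})})$ is a set of compact generators, and $\operatorname{Stab}(\F_{(\fa,\fa_{\0})})$ is already thick in $\bK$ (it is closed under summands and triangles since $\F_{(\fa,\fa_{\0})}$ is a Frobenius subcategory on which the relevant constructions restrict); therefore $\bK^{c} = \operatorname{Stab}(\F_{(\fa,\fa_{\0})})$. The final sentence, that $\bK$ is compactly generated, is immediate: $\bK$ admits set-indexed coproducts, the isomorphism classes of finite-dimensional modules form a set, and by (a) these generate, so for nonzero $M$ some finite-dimensional $C$ has $\underline{\operatorname{Hom}}_{\bK}(C,M) \neq 0$.

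The main obstacle I anticipate is the homotopy-colimit argument in part (a): one must verify that the directed union $M = \bigcup_i M_i$ in $\CC_{(\fa,\fa_{\0})}$ really does yield $M$ as a summand (or retract) of $\operatorname{hocolim} M_i$ in $\operatorname{Stab}(\CC_{(\fa,\fa_{\0})})$. This requires knowing that the natural map $\operatorname{colim} M_i \to M$ is an isomorphism of $\fa$-modules (clear) and that passing to the stable category does not destroy the splitting — typically one shows the canonical triangle $\bigoplus M_i \to \bigoplus M_i \to \operatorname{hocolim} M_i \to$ has $\operatorname{hocolim} M_i \cong M$ up to a projective summand, using that $\lim^1$ of the relevant Hom-groups vanishes or that the colimit in the Frobenius category is exact. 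A secondary technical point is the compactness verification in (b), which hinges on the fact that a complete resolution of a finite-dimensional module can be taken to be degreewise finite-dimensional (equivalently, $U(\fa)$ has a nice PBW-type structure making $\F_{(\fa,\fa_{\0})}$ behave like finitely generated modules), so that $\underline{\operatorname{Hom}}$ out of it commutes with coproducts.
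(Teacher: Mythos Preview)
Your proposal is correct, and for part (b) it matches the paper's approach almost exactly (both appeal to Neeman's result that $\bK^{c} = \Thick(\mathcal{G})$ once a set $\mathcal{G}$ of compact generators is found). The difference is in part (a).

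You prove generation by showing $\Loc_{\bK}(\operatorname{Stab}(\F_{(\fa,\fa_{\0})})) = \bK$: write $M$ as a directed union of finite-dimensional $\fa$-submodules and realize $M$ as a retract of the homotopy colimit. This works, but as you yourself flag, it carries the usual technical overhead (matching $\operatorname{colim}$ with $\operatorname{hocolim}$ in the stable category, non-sequential systems, etc.). The paper instead uses the equivalent formulation of compact generation directly: given a nonzero $M$ in $\bK$, it produces a compact $C$ with $\underline{\Hom}(C,M) \neq 0$. Concretely, replace $M$ by a module with no projective summands; by PBW any nonzero element generates a finite-dimensional $\fa$-submodule $N \in \F_{(\fa,\fa_{\0})}$; $N$ can have no projective summand (else it would split off $M$); hence the inclusion $N \hookrightarrow M$ is nonzero in the stable category. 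This sidesteps the homotopy-colimit machinery entirely and is shorter. Your approach has the mild advantage of being the one that transports verbatim to other Frobenius settings where one already knows the colimit/hocolim comparison; the paper's buys simplicity here because local finiteness via PBW makes the single-element argument immediate.
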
 
\begin{proof} We first observe that since every object in $\FF_{(\fa,\fa_{\0})}$ is finite dimensional they give compact objects in $\bK$.

We now prove $\bK$ is compactly generated.  Let $M \in \bK$ be non-zero.  As in \cite[Lemma 3.1]{Rick} we may assume $M$ is stably isomorphic to an $\fa$-module,  which we also call $M$, with no projective direct summands.  By the PBW theorem for Lie superalgebras the module $M$ is locally finite.  Because of this the $U(\fa)$-module generated by a fixed non-zero element of $M$ is finite dimensional and, hence, is an object of $\F_{(\fa,\fa_{\0})}$.  Call it $N$.  Furthermore, $N$ cannot have a projective direct summand for, if it did, it would provide a projective submodule of $M$ and, hence (since projectives are injective), a projective summand of $M$.   The inclusion map defines an isomorphism between $N$ and its image in $M$ in 
$\CC_{(\fa,\fa_{\0})}$.  But since $M$ and $N$ have no projective summands \cite[Lemma 3.1(b)]{Rick} implies this map also defines a stable isomorphism between $N$ and its image.  In particular, it defines a non-zero homomorphism between $N$ and $M$ in $\bK$.  Therefore $\bK$ is generated by  $\operatorname{Stab}(\F_{(\fa,\fa_{\0})})$ and, in particular, $\bK$ is compactly generated. To prove (b) we note that since 
$\bK = \Loc_{\bK}\left(\operatorname{Stab}(\FF_{(\fa,\fa_{\0})}\right)$ it follows from \cite[Lemma 2.2]{Nee} that $\bK^{c}$ 
is precisely the thick subcategory generated by $\operatorname{Stab}(\FF_{(\fa,\fa_{\0})})$.   That is, $\bK^{c}$ is $\operatorname{Stab}(\FF_{(\fa,\fa_{\0})})$ itself.
\end{proof}

The antipode map on $U(\fa)$ gives a duality on $\FF_{(\fa, \fa_{\0})}$ and from this it follows that the compact objects of $\bK$ are rigid. It is straightforward to verify that $\bK$ is a compactly generated TTC.

\subsection{Relative Cohomology and the Cohomological Support}\label{SS:relativecohomology} The relative cohomology ring for $(\fa,\fa_{\0})$ can be computed using an explicit complex (cf.\ \cite[Section 2.3]{BKN1}). Since $[\fa_{\1},\fa_{\1}]\subseteq \fa_{\0}$ the differentials in the complex are zero and from this it follows that 
$$
R:=\HH^{\bullet}(\fa,\fa_{\0}; \C)=
S^{\bullet}(\fa_{\1}^*)^{A_{\0}}.
$$
Since $A_{\0}$ is assumed to be reductive it follows that $R$ is a finitely generated commutative $\C$-algebra. Moreover, it was shown in \cite{BKN1} that 
if $M_{1}$, $M_{2}$ are in $\FF_{(\fa,\fa_{\0})}$ then $\Ext_{(\fa,\fa_{\0})}^{\bullet}(M_{1},M_{2})$ is a finitely generated $R$-module.

If $X=\operatorname{Proj}(R)$, then $X$ is a Zariski space. Recall that $\bK^{c}=\operatorname{Stab}(\F_{(\fa,\fa_{\0})})$, and that $\XX$ denotes the collection of closed subsets of $X$. 
We will now define an assignment $V_{(\fa,\fa_{\0})}:\bK^{c}\rightarrow \XX_{cl}$ as follows. Let 
$$V_{(\fa,\fa_{\0})}(M):=\operatorname{Proj}(R/J_{\fa, M})$$ 
where $J_{\fa, M}=\operatorname{Ann}_{R}(\Ext_{\mathcal{F}}^{\bullet}(M,M))$
(i.e., the annihilator ideal of this module).  An equivalent formulation of this assignment is 
$$V_{(\fa,\fa_{\0})}(M)=\operatorname{Proj}(\{P\in \operatorname{Spec}(R)\mid \Ext_{(\fa,\fa_{\0})}^{\bullet}(M,M)_{P}\neq 0\})$$   
(i.e., the primes ideals $P$ for which the cohomology is non-zero when localized at $P$).

More generally, for a pair of modules $M,N \in {\mathcal F}_{(\fa,\fa_{\0})}$, let 
$$V_{(\fa,\fa_{\0})}(M,N)=\operatorname{Proj}(\{P\in \operatorname{Spec}(R)\mid \Ext_{(\fa,\fa_{\0})}^{\bullet}(M,N)_{P}\neq 0\}).$$
Note that $V_{(\fa,\fa_{\0})}(M)=V_{(\fa,\fa_{\0})}(M,M)$. 
   

As in the case with group algebras, one can verify that properties (\ref{SS:supportdata}.\ref{E:supportone})--(\ref{SS:supportdata}.\ref{E:supportfour}), (\ref{SS:supportdata}.\ref{E:supportsix}) hold for $V_{(\fa,\fa_{\0})}$ (cf.\ \cite[Chapters 8, 10]{Evensbook}). We call $V_{(\fa,\fa_{\0})}$ a \emph{pre-support data} on $\bK^c$ as we will need additional conditions on $\fa$ to have an actual support data which satisfies property  (\ref{SS:supportdata}.\ref{E:supportfive}). 

\subsection{Detecting Subalgebras}\label{SS:detectingsubalgebras} We say that a Lie superalgebra is of {\em Type I} if it admits a $\Z$-grading $\fg=\g_{-1}\oplus {\g}_{0}\oplus {\g}_{1}$  which is compatible with the Lie bracket and with $\fg_{\0}=\fg_{0}$ and
$\fg_{\1}=\g_{-1}\oplus {\g}_{1}$. Otherwise, $\fg$ is of {\em Type II}. 
The prototypical example of a Type I classical Lie superalgebra is $\mathfrak{gl}(m|n)$.  Simple Type I classical Lie superalgebras in the Kac classification \cite{Kac1} include those of types $A(m,n)$, $C(n)$ and $P(n)$.

Throughout this section we will assume that $\fg$ is a Type I classical Lie superalgebra admitting both a stable and 
polar action of $G_{\0}$ on $\fg_{\1}$.  See \cite[Section 3]{BKN1} for the definition of stable and polar, and see Table~5 in \emph{loc.\  cit.} for a list of classical Lie superalgebras which are stable and polar.  Note that this is the same setup used in \cite{LNZ}. We quickly 
review the construction of the two families of  detecting subalgebras for 
classical Lie superalgebras using the invariant theory of $G_{\0}$ on $\fg_{\1}$. For further details we refer the reader to 
\cite[Sections 3--4]{BKN1}. 

If the action of $G_{\0}$ on $\fg_{\1}$ is a stable action, then we 
can construct the detecting Lie subsuperalgebra $\ff$ as follows. The stable action 
ensures that there is a generic point $x_{0}$ in $\fg_{\1}$ (i.e., the $G_{\0}$-orbit of $x_{0}$ is closed and has maximal dimension). Set $H=\operatorname{Stab}_{G_{\0}}(x_0)$ and $N=\operatorname{Norm}_{G_{\0}}(H)$. Now let $\ff_{\1}=\fg_{\1}^{H}$ and set $\ff_{\0}=[\ff_{\1}, \ff_{\1}]$.  We then put $\ff=\ff_{\0}\oplus \ff_{\1}$.  We note that the $\ff$ used here differs slightly from the one in \cite{LNZ}.  However, the difference is only in the choice of $\ff_{\0}$ and one can verify that the arguments used in \emph{loc.\ cit.}  show that their results also apply to our choice of $\ff$.

When the action of $G_{\0}$ on $\fg_{\1}$ is a polar action (as defined by Dadok and Kac \cite{dadokkac}) 
we can construct a second detecting subalgebra $\fe$ as follows.  We fix a Cartan subspace $\fe_{\1} \subseteq \fg_{\1}$ and then obtain  
a classical Lie subalgebra $\fe=\fe_{\0}\oplus \fe_{\1}$ with $\fe_{\0}:=[\fe_{\1},\fe_{\1}]$.  From \cite[Section 8.9]{BKN1} it follows for $\mathfrak{gl}(m|n)$ and the simple classical Type I Lie superalgebras that $\fe_{\0}$ and $\ff_{\0}$ are tori and $[\fe_{\0}, \fe_{\1}]=[\ff_{\0}, \ff_{\1}]=0$.  For example, for $\mathfrak{gl}(n|n)$ one can take $\ff_{\1}$ to be the span of the matrix units $\left\{E_{i,n+i}, E_{n+i,i} \mid i=1, \dotsc, n \right\}$  and one can take $\fe_{\1}$ to be the span of  $\left\{E_{i,n+i}+ E_{n+i,i} \mid i=1, \dotsc, n \right\}$.  Then $\ff_{\0}$ is the Cartan subalgebra of $\mathfrak{gl}(n|n)$ consisting of diagonal matrices and $\fe_{\0}$ is the subalgebra spanned by $\left\{E_{i,i}+E_{n+i,n+i} \mid i = 1, \dotsc , n \right\}$.

Since $\fg$ is a classical Lie algebra such that $G_{\0}$ admits a stable and polar action on $\fg_{\1}$ we can make an appropriate choice of generic element $x_0$ to ensure that $\fe\leq \ff \leq \fg$. 
Furthermore, these inclusions induce restriction homomorphisms $S(\g_{\1}^{*}) \to S(\f_{\1}^{*})\to S(\e_{\1}^{*})$ which provide isomorphisms
\begin{equation} \label{eq:cohoiso}
\HH^{\bullet}(\fg,\fg_{\0};\C) \stackrel{\text{res}}{\longrightarrow}
\HH^{\bullet}(\ff,\ff_{\0};\C)^{N}\stackrel{\text{res}}{\longrightarrow} \HH^{\bullet}(\fe,\fe_{\0};\C)^{W}.
\end{equation}
where $W\subseteq G_{\0}$ is a pseudoreflection group. This implies that $R=\HH^{\bullet}(\fg,\fg_{\0};\C)$ 
is a polynomial algebra.  See \cite{BKN1} for further details.

\subsection{Superalgebras of the form \texorpdfstring{${\fz}={\ft}\oplus {\fz}_{\1}$}{z=t+z1}} \label{SS:z-properties} In this section we will focus on Lie superalgebras of the form 
$\fz=\fz_{\0}\oplus \fz_{\1}$ where $\fz_{\0}=\ft$ is a torus and $[\fz_{\0},\fz_{\1}]=0$.  This is a classical Lie superalgebra. As an example, $\fz$ could be one of the detecting subalgebras introduced in the previous section. The main goal of this section will be to 
classify thick tensor ideals in $\bK^{c}$ and to compute $\operatorname{Spc}(\bK^{c})$ where $\bK^{c}=\operatorname{Stab}(\FF_{(\fz,\fz_{\0})})$. We first verify that 
(\ref{SS:extend}.\ref{E:extendone1}), (\ref{SS:extend}.\ref{E:extendtwo2}) and  (\ref{SS:extend}.\ref{E:extendthree3}) hold for $\fz$ by following the arguments in \cite{BCR:96} and indicating the places where modifications are necessary. 

Let  $K$ denote an algebraically closed extension field of $\C$ having sufficiently large transcendence degree;  that is, larger than $\dim (\fz_{\1})$. For $M\in \CC_{(\fz,\fz_{\0})}$, set 
\begin{equation}
V^{r}_{\fz_{\1}}(K\otimes M) = \{\,x\in \operatorname{Proj}(K\otimes \fz_{\1})\mid K\otimes M \text{ is not projective as a $U({\langle x \rangle})$-module}\,\}.
\end{equation}
Here $\langle x \rangle$ is the Lie superalgebra generated by $x$ and here as elsewhere $\otimes$ denotes the tensor product over $\C$. 

We first verify the tensor product property for these generalized rank varieties.  This is an analogue of \cite[Theorem 7.4]{BCR:96}. 

\begin{prop} \label{P:TensorProductForz1Rank}
Let $M$ and $N$ be in $\CC_{(\fz,\fz_{\0})}$. Then 
$$V^{r}_{\fz_{\1}}(K\otimes (M\otimes N)) = V^{r}_{\fz_{\1}}(K\otimes M) \cap V^{r}_{\fz_{\1}}(K\otimes N).$$
\end{prop}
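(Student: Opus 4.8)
The statement is the tensor product property for the generalized rank varieties $V^r_{\fz_1}(K \otimes M)$. The natural model is the proof of \cite[Theorem 7.4]{BCR:96}, adapted to the Lie superalgebra setting. The key observation is that for a single element $x \in K \otimes \fz_1$ (with $x$ odd, so $x^2 = \tfrac12[x,x] \in K \otimes \fz_0$), the rank variety condition is entirely about restriction to the subalgebra $\langle x \rangle$, and projectivity over $U(\langle x\rangle)$ should be governed by a cyclic shift/Heller-translate argument exactly as for the rank-one restricted enveloping algebras (or for $\Z/p$) in the classical theory.

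**Main steps.** First I would reduce to a pointwise statement: $x \in V^r_{\fz_1}(K\otimes(M\otimes N))$ iff $x \in V^r_{\fz_1}(K\otimes M)$ and $x \in V^r_{\fz_1}(K\otimes N)$, for each $x \in \operatorname{Proj}(K\otimes\fz_1)$. Fix such an $x$ and set $A = U(\langle x\rangle)$. Since $[\fz_0,\fz_1]=0$ and $\fz_1$ is (super)commutative, $\langle x\rangle$ is a very small Lie superalgebra — essentially spanned by $x$ and $x^2$ — so $A$ is a commutative local algebra and its module theory is elementary. The inclusion $\subseteq$ is immediate from property (\ref{SS:supportdata}.\ref{E:supportfive})-type reasoning, or more directly: if $K\otimes M$ is projective over $A$ then $K\otimes M$ is $A$-injective (Frobenius), hence $K\otimes(M\otimes N) \cong (K\otimes M)\otimes_K(K\otimes N)$ is $A$-projective because a projective module tensored (over the ground field, with the diagonal action via the coproduct of $U(\langle x\rangle)$) with any module is again projective — this uses that $A$ is a Frobenius/Hopf-type algebra and that $K\otimes N$, being locally finite, is a direct limit of finite-dimensional modules. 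The reverse inclusion $\supseteq$ is the content of the Dade-lemma-style argument: if both $K\otimes M$ and $K\otimes N$ are non-projective over $A$, one must produce a non-projective summand of the tensor product. Here I would mimic the BCR argument using the explicit structure of $A$-modules: over the relevant rank-one algebra every finitely generated module decomposes into frees plus a known "small" indecomposable, and the tensor product of two small indecomposables contains a small indecomposable summand (computed via Clebsch–Gordan-type decompositions). The transcendence degree hypothesis on $K$ enters, as in \cite{BCR:96}, to guarantee that one can take such generic points and that the finite-dimensional-approximation arguments go through for infinite-dimensional $M,N$.

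**The main obstacle.** The delicate point is the reverse inclusion for infinite-dimensional modules: $M, N \in \CC_{(\fz,\fz_0)}$ need not be finite-dimensional, so "non-projective over $A$" must be handled via the local finiteness (write $M$ as a union of finite-dimensional submodules) together with the large transcendence degree of $K$, which lets one detect non-projectivity on a single generic $x$ and transfer it through the tensor product. Controlling summands of an infinite-dimensional module and ensuring the BCR finite-dimensional tensor product computation survives the limit is where I expect the real work to lie; the purely algebraic computation for $\langle x\rangle$-modules themselves should be routine given the commutativity of $\fz_1$ and the vanishing $[\fz_0,\fz_1]=0$. I would structure the write-up to first dispose of the finite-dimensional case by an explicit module computation for $U(\langle x\rangle)$, then bootstrap to the general case using local finiteness and the transcendence-degree hypothesis following \cite[Section 7]{BCR:96} verbatim where possible, flagging only the spots where the super sign conventions or the non-restricted nature of $U(\fz)$ force a change.
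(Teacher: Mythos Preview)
Your outline is in the right spirit---both you and the paper follow \cite[Section 7]{BCR:96}---but the routes diverge structurally. The paper first proves the \emph{external} formula
\[
V^{r}_{\fz_{\1}\times\fz'_{\1}}(K\otimes(M\boxtimes N)) = V^{r}_{\fz_{\1}}(K\otimes M)\times V^{r}_{\fz'_{\1}}(K\otimes N)
\]
for a pair of such superalgebras (the analogue of \cite[Theorem 7.2]{BCR:96}), and only then pulls back along the coproduct $\Delta:U(\fz)\to U(\fz)\otimes U(\fz)$ to obtain the internal statement (as in \cite[Theorem 7.4]{BCR:96}). You instead attack the internal tensor product directly, pointwise in $x$. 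Both succeed here because the module theory over $\langle x\rangle$ is so simple, but the external-then-diagonal route packages the bookkeeping more cleanly and transfers the BCR argument almost verbatim.

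A few of your structural claims need sharpening, and these are exactly what the paper supplies. First, $A=U(\langle x\rangle)$ is \emph{not} local: $\langle x\rangle$ is isomorphic to $\fq(1)$ (or to a one-dimensional odd abelian superalgebra when $[x,x]=0$), and $U(\fq(1))$ is a polynomial ring. What makes the argument work is that any module in $\CC_{(\langle x\rangle,\langle x\rangle_{\0})}$ splits as a direct sum of $\langle x\rangle_{\0}$-weight spaces, each a module for the Clifford-type quotient $U(\fq(1))/(t-\lambda(t))$; for $\lambda\neq 0$ this quotient is semisimple, so non-projectivity lives entirely at weight $0$, where one is reduced to $\C[x]/(x^{2})$. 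Second, your worry about infinite-dimensional limits is unnecessary: the paper invokes \cite[Corollary 4.8]{Aus} to decompose each weight space (even infinite-dimensional ones) directly into finite-dimensional indecomposables---trivials and two-dimensional projectives---after which the tensor-product analysis is a finite combinatorial check. Your proposed Clebsch--Gordan computation would go through once these facts are in place, but identifying the $\fq(1)$ structure and the weight-$0$ reduction is the real content that your sketch leaves implicit.
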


\begin{proof} Let $\fz, \fz^{\prime}$ be Lie superalgebras which satisfy the assumptions at the beginning of this section, and $M$ and $N$ be $\fz$- and $\fz^{\prime}$-modules, respectively. 
We see that 
$$
V^{r}_{\fz_{\1} \times \fz_{\1}{\prime}}(K\otimes(M\boxtimes N)) = V^{r}_{\fz_{\1}}(K\otimes M) \times V^{r}_{\fz^{\prime}_{\1}}(K\otimes N)
$$
as subvarieties of $\operatorname{Proj}(K\otimes (\fz_{\1} \times \fz_{\1}^{\prime})) = \operatorname{Proj}(K\otimes \fz_{\1}) \times \operatorname{Proj}(K\otimes \fz_{\1}^{\prime})$.  

The verification of this statement uses the same line of reasoning as given in \cite[Theorem 7.2]{BCR:96}. The only modification necessary is to note that for any  fixed $x \in \fz_{\1}$, the subalgebra $\langle x \rangle$ generated by $x$ is either isomorphic to $\fq (1)$ or a $1$-dimensional abelian Lie superalgebra concentrated in degree $\1$.  As a consequence, any (possibly infinite dimensional) $\langle x \rangle$-module $M$ in $\CC_{(\langle x \rangle,\langle x \rangle_{\0})}$ decomposes as a direct sum $M=\bigoplus M_{\lambda}$ of weight spaces for $\langle x \rangle_{\0}$ and this is a decomposition as $\langle x \rangle$-modules. If $\langle x \rangle_{\0} \neq 0$, then write $t$ for a vector which spans $\langle x \rangle_{\0}$.  We may then view $M_{\lambda}$ as a module for the Clifford algebra $A_{\la}=U(\fq(1))/{(t-\lambda(t))}$. Applying \cite[Corollary 4.8]{Aus}   we have that $M_{\lambda}$ is a direct sum of 
finite dimensional indecomposable modules. If $\lambda\ne 0$ then the only indecomposable modules are two dimensional projective simple modules. When $\lambda=0$ the indecomposables consist of the one dimensional trivial module  and its two dimensional projective cover (see \cite[Proposition 5.2]{BKN2}).  On the other hand, if $\langle x \rangle_{\0}=0$, then the enveloping algebra is isomorphic to an exterior algebra on the generator $x$ and one is immediately in the case $\lambda=0$. In every case one can then analyze the variety of the external product of such indecomposables as in 
\cite[Theorem 7.2]{BCR:96}. 

In order to finish the proof for a given $\fz$ one can use the coproduct map $\Delta:U({\fz})\rightarrow U(\fz)\otimes U(\fz)$ and then proceed as in the proof of \cite[Theorem 7.4]{BCR:96}. 
\end{proof} 

We can now prove a version of Dade's Lemma for modules in $\CC_{(\fz,\fz_{\0})}$.

\begin{prop} \label{T:Dade}
Let $M\in\CC_{(\fz,\fz_{\0})}$. Then $M$ is projective if and only if $V^{r}_{\fz_{\1}}(K\otimes M) = \varnothing$. 
\end{prop}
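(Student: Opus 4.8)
The statement is a version of Dade's Lemma for the category $\CC_{(\fz,\fz_{\0})}$ where $\fz=\ft\oplus\fz_{\1}$ with $\ft$ a torus acting trivially on $\fz_{\1}$. The easy direction is that if $M$ is projective then it is projective on restriction to every $U(\langle x\rangle)$, so $V^{r}_{\fz_{\1}}(K\otimes M)=\varnothing$; this follows because projective modules stay projective after field extension and after restriction to subalgebras over which $U(\fz)$ is free (the PBW theorem for Lie superalgebras gives this freeness). So the content is the converse: if $K\otimes M$ is not projective then there is some $x\in\operatorname{Proj}(K\otimes\fz_{\1})$ on which $K\otimes M$ fails to be projective. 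This is the analogue of \cite[Theorem 6.4]{BCR:96} (the infinite-dimensional Dade's Lemma), and I would follow their proof strategy closely, indicating only the modifications needed in the super/Lie-superalgebra setting.

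First I would reduce to the case of a module that is finitely generated over $U(\fz_{\1})$ — or rather, exploit that $M\in\CC_{(\fz,\fz_{\0})}$ decomposes as a direct sum of weight spaces for $\ft$, each of which is a module for a Clifford-type quotient, and that projectivity of $M$ is equivalent to projectivity of each weight space. This is the same weight-space reduction used in the proof of the preceding proposition on tensor products. The key point is that $U(\fz)=U(\ft)\otimes\Lambda(\fz_{\1})$ as an algebra (since $[\ft,\fz_{\1}]=0$ and $[\fz_{\1},\fz_{\1}]\subseteq\ft$ is forced to act appropriately on weight spaces), so after fixing a weight $\lambda$ the relevant algebra is an exterior algebra over a Clifford algebra, and the rank variety is literally $\operatorname{Proj}$ of the affine space $K\otimes\fz_{\1}$. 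Then I would run the Benson–Carlson–Rickard argument: assume $K\otimes M$ is projective on every cyclic subalgebra $\langle x\rangle$; one shows, using a Noetherian-type/generic-flatness argument over the function field and a rank-variety computation exactly as in \emph{loc.\ cit.}, that $K\otimes M$ must be projective as a $U(K\otimes\fz_{\1})$-module, and then that projectivity over the exterior/Clifford algebra forces projectivity as a $\fz$-module. The only genuinely new ingredient, as in the tensor-product proposition, is the structure theory of $\langle x\rangle$-modules: $\langle x\rangle$ is either $\fq(1)$ or a one-dimensional odd abelian superalgebra, and by \cite[Corollary 4.8]{Aus} together with \cite[Proposition 5.2]{BKN2} every module in $\CC_{(\langle x\rangle,\langle x\rangle_{\0})}$ is a sum of finite-dimensional indecomposables with an explicit list, so "projective as a $U(\langle x\rangle)$-module" has the same combinatorial meaning as in the group-algebra case.

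I would therefore structure the proof as: (1) the forward direction via PBW freeness and base change; (2) the reduction to weight spaces and to the algebra $\Lambda(\fz_{\1})$ over a Clifford algebra, citing the structure of $\langle x\rangle$-modules as in the previous proposition; (3) invoking the Benson–Carlson–Rickard machinery (their Sections 5–6), in particular the rank-variety criterion for projectivity over an exterior algebra extended to a large field $K$, to conclude that emptiness of $V^{r}_{\fz_{\1}}(K\otimes M)$ forces $K\otimes M$ to be projective over $U(K\otimes\fz_{\1})$; (4) descending from $U(K\otimes\fz_{\1})$-projectivity to $U(\fz)$-projectivity of $M$ itself, using that $K$ is a large enough extension field (transcendence degree $>\dim\fz_{\1}$) so that projectivity is not lost — this last descent is the standard flat base change argument for self-injective algebras.

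The main obstacle, as it is in \cite{BCR:96}, is step (3): making the generic-flatness/rank-variety argument work for a possibly infinite-dimensional module $M$ over the exterior-type algebra after passing to the large field $K$. The finite-dimensional Dade's Lemma over $\C$ is comparatively routine, but the infinite-dimensional version genuinely requires the field extension of large transcendence degree and a careful argument that a module detected as non-projective must be detected on a single cyclic shifted subalgebra defined over $K$. I expect one can cite \cite{BCR:96} essentially verbatim here, with the substitution of the $\fq(1)$/odd-abelian structure theory for $\langle x\rangle$-modules as the one place where the argument must be re-examined; the remaining points — weight-space decomposition, PBW freeness, and flat descent of projectivity — are bookkeeping once that substitution is in place.
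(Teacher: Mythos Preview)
Your overall strategy matches the paper's in spirit (weight decomposition, reduce to an exterior algebra, cite \cite{BCR:96}), but there is a genuine gap in your step (2)--(3). After decomposing $M=\bigoplus_{\lambda}M_{\lambda}$, the summand $M_{\lambda}$ is a module for the Clifford algebra $U(\fz)/(t-\lambda(t)\mid t\in\ft)$, \emph{not} for the exterior algebra $\Lambda^{\bullet}(\fz_{\1})$; the two coincide only when the form $\lambda([\,\cdot\,,\,\cdot\,])$ on $\fz_{\1}$ vanishes, in particular when $\lambda=0$. So you cannot invoke the BCR Dade argument for $\Lambda^{\bullet}(\fz_{\1})$ on $M_{\lambda}$ directly. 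The paper's fix is a short but essential trick you are missing: tensor $M_{\lambda}$ with the simple module $L(-\lambda)$. By the tensor product property just proved, $V^{r}_{\fz_{\1}}(K\otimes(M_{\lambda}\otimes L(-\lambda)))\subseteq V^{r}_{\fz_{\1}}(K\otimes M_{\lambda})=\varnothing$, and now $M_{\lambda}\otimes L(-\lambda)$ has trivial $\ft$-action, hence is a module for $U(\fz/\fz_{\0})\cong\Lambda^{\bullet}(\fz_{\1})$, where the BCR exterior-algebra Dade's Lemma applies verbatim.

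Your step (4) is also not the mechanism the paper uses. There is no descent from $K$ back to $\C$ to perform: BCR's Theorem 5.2 already gives $\Lambda^{\bullet}(\fz_{\1})$-projectivity over $\C$. What remains is to pass from $\Lambda^{\bullet}(\fz_{\1})$-projectivity of $M_{\lambda}\otimes L(-\lambda)$ to projectivity of $M_{\lambda}$ in $\CC_{(\fz,\fz_{\0})}$, and this is done via the Lyndon--Hochschild--Serre spectral sequence for $(\fz_{\0},\fz_{\0})\trianglelefteq(\fz,\fz_{\0})$, which collapses to show $\Ext^{n}_{(\fz,\fz_{\0})}(L(\lambda),M_{\lambda})\cong\Ext^{n}_{\fz_{\1}}(\C,M_{\lambda}\otimes L(-\lambda))=0$ for $n>0$. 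Since $L(\lambda)$ is the unique simple in the block containing $M_{\lambda}$, this suffices. Your flat-base-change descent would not obviously give you the relative $(\fz,\fz_{\0})$-projectivity you need.
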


\begin{proof}
If $M$ is projective in $\CC_{(\fz,\fz_{\0})}$ then $K\otimes M$ is projective when restricted to $\langle x \rangle $ for any $x\in \Proj(K\otimes\fz_{\1})$ and so $V^{r}_{\fz_{\1}}(K\otimes M) = \varnothing$. 

For the converse, note that by our assumptions on $M$ and $\fz$ we have $M = \bigoplus_{\la\in\ft^{*}} M_{\la}$ where $M_{\la}$ is the $\lambda$-weight space for $\fz_{\0}=\ft$. Furthermore, this decomposition is as $\fz$-modules. Since the generalized rank variety of a direct sum is the union of the varieties of the direct summands it suffices to prove the result for $M_{\la}$. 
Now $M_{\la}$ is a module for the Clifford algebra $U(\fz)/(t-\lambda(t) \mid t\in\ft)$. By \cite[Section 5.2]{BKN2} the associated block ${\mathcal B}_{\la}$ containing the module $M_{\la}$ of $\CC_{(\fz,\fz_{\0})}$ has only a single simple module up to parity shift.  Call it $L(\la)$.

Now assume  $V^{r}_{\fz_{\1}}(K\otimes M_{\la}) = \varnothing$. Since 
$$V^{r}_{\fz_{\1}}(K\otimes M_{\la} \otimes L(-\la)) \subseteq V^{r}_{\fz_{\1}}(K\otimes M_{\la}) \cap V^{r}_{\fz_{\1}}(K\otimes L(-\la))$$ 
it follows that 
$V^{r}_{\fz_{\1}}(K\otimes M_{\la} \otimes L(-\la)) = \varnothing$. But, $M_{\la} \otimes L(-\la)$ has trivial $\ft$-action, so it is a module for $U(\fz/\fz_{\0}) \cong \Lambda^{\bullet}(\fz_{\1})$, where $\fz_{\1}$ is 
regarded as an abelian Lie superalgebra. The proof of Dade's Lemma (Theorem 5.2) in \cite{BCR:96} directly carries over 
to the algebra $\Lambda^{\bullet}(\fz_{\1})$ (which behaves like the group algebra of an elementary abelian 2-group). Hence, $M_{\la} \otimes L(-\la)$ is projective as a module for $\Lambda^{\bullet}(\fz_{\1})$.

It remains to show that $M_{\la}$ is projective in $\CC_{(\fz,\fz_{\0})}$. Recalling that $L(\la)$ is the only simple module in the block ${\mathcal B}_{\la}$ up to parity shift, and that $L(\la)$ is finite dimensional, it suffices to show that 
$$\Ext^{n}_{(\fz,\fz_{\0})}(L(\la),M_{\la})=\Ext^{n}_{(\fz,\fz_{\0})}(\C,M_{\la}\otimes L(-\la))=0$$ 
for all $n>0$. Apply the Lyndon-Hochschild-Serre spectral sequence for the pair $(\fz_{\0},\fz_{\0}) \triangleleft (\fz,\fz_{\0})$ (cf.\ \cite[I.\ Theorem 6.5]{BorelWallach}):
$$
E_{2}^{i,j} = \Ext^{i}_{(\fz_{\1},\{0\})}(\C,\Ext^{j}_{(\fz_{\0},\fz_{\0})}(\C,M_{\la}\otimes L(-\la))) \Rightarrow \Ext^{i+j}_{(\fz,\fz_{\0})}(\C,M_{\la}\otimes L(-\la)).
$$
The spectral sequence collapses because modules in $\CC_{(\fz_{\0},\fz_{\0})}$ are completely reducible. This yields 
$$
\Ext^{n}_{(\fz,\fz_{\0})}(\C,M_{\la}\otimes L(-\la)) \cong \Ext^{n}_{\fz_{\1}}(\C,M_{\la}\otimes L(-\la)) = 0
$$
for all $n>0$ by the last sentence of the preceding paragraph.
\end{proof}

We let $V_{(\fz , \fz_{\0})}(-)$ be the cohomological variety theory for $\fz$ as defined in \cref{SS:relativecohomology}.  As explained in the proof of \cref{T:torusclassification} (below), this is a support data and we can define the variety theory  $\VV_{(\fz,\fz_{\0})}(-)$ on $\bK$ following \cref{D:abstractsupport2}.  Assuming that the transcendence degree of $K$ over $\C$ is at least $\dim (\fz_{\1})$ we have a bijective correspondence between $V^{r}_{\fz_{\1}}(K)$ and $\V_{(\fz,\fz_{\0})}(\C)$, which equals $X$, by a direct calculation. Namely, a point in  $V^{r}_{\fz_{\1}}(K)$ corresponds to a generic point for a uniquely determined irreducible subvariety of $X$ (see \cite[Propositions 2.1, 3.1]{BCR:96}). Let ${\beta}^{*}:\V_{(\fz,\fz_{\0})}(\C)\rightarrow V^{r}_{\fz_{\1}}(K)$ denote this correspondence.  Under this correspondence one can use the same strategy as outlined in \cite[Sections 9--10]{BCR:96} to show that for 
all $M\in\CC_{(\fz,\fz_{\0})}$ 
$${\beta}^{*}:\V_{(\fz,\fz_{\0})}(M)\rightarrow V^{r}_{\fz_{\1}}(K\otimes M)$$
is a bijection.
For the analogous statement, see \cite[Theorem 10.5]{BCR:96}. In order to make this translation, 
for $W\in \XX_{sp}$, $E(W)$ corresponds to our $\Gamma_{W}(\unit )$ and $F(W)$ corresponds to $L_{W}(\unit )$.  Moreover, if $W\in \XX_{irr}$, 
$\kappa(W)$ corresponds to $\nabla_{W}(\unit )$ (cf.\ \cite[p.\ 609, Table]{BIK:08a}). Hence, we obtain the following result. 

\begin{prop} \label{P:beta*bijection}
Let $M\in\CC_{(\fz,\fz_{\0})}$. Then ${\beta}^{*}:\V_{(\fz,\fz_{\0})}(M) \rightarrow V^{r}_{\fz_{\1}}(K\otimes M)$ is a bijection. 
\end{prop}

We can now classify the thick tensor ideals of $\operatorname{Stab}(\FF_{(\fz,\fz_{\0})})$.

\begin{theorem}\label{T:torusclassification} Let $\fz=\fz_{\0}\oplus \fz_{\1}$ where $\fz_{\0}=\ft$ is a torus and $[\fz_{\0},\fz_{\1}]=0$. Set 
$R=S^{\bullet}(\fz_{\1}^{*})$. Then the thick tensor ideals of $\bK^{c}=\operatorname{Stab}(\FF_{(\fz,\fz_{\0})})$ are in bijective correspondence with specialization closed subsets of $\operatorname{Proj}(R)$.  Furthermore, 
$\operatorname{Spc}(\bK^{c})$ is homeomorphic to $\operatorname{Proj}(R)$. 
\end{theorem}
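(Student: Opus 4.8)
The plan is to invoke the general machinery established in \cref{SS:classifying} and \cref{SS:balmer}, so the work reduces to verifying its hypotheses for the pair $(\fz, \fz_{\0})$. Concretely, I would set $X = \operatorname{Proj}(\operatorname{Spec}(R))$ with $R = S^{\bullet}(\fz_{\1}^{*}) = \HH^{\bullet}(\fz, \fz_{\0}; \C)$ (the identification from \cref{SS:relativecohomology}, using that $\fz_{\0} = \ft$ acts trivially on $\fz_{\1}$, so $S^{\bullet}(\fz_{\1}^{*})^{\fz_{\0}} = S^{\bullet}(\fz_{\1}^{*})$); this is a Zariski space since $R$ is a finitely generated commutative graded $\C$-algebra. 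Then I would check that $V := V_{(\fz,\fz_{\0})}$ of \cref{SS:relativecohomology} is a support data on $\bK^{c} = \operatorname{Stab}(\FF_{(\fz,\fz_{\0})})$ satisfying the two extra properties (\ref{SS:supportdata}.\ref{E:supportseven}) and (\ref{SS:supportdata}.\ref{E:supporteight}), and that the assignment $\VV = \VV_{(\fz,\fz_{\0})}$ of \cref{D:abstractsupport2} genuinely extends it. Once all of that is in place, \cref{I:bijectiongeneral} gives the bijection between thick tensor ideals of $\bK^{c}$ and $\XX_{sp}$, and \cref{K:bijectiongeneral} gives the homeomorphism $X \cong \operatorname{Spc}(\bK^{c})$.

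The verifications break into four pieces. First, properties (\ref{SS:supportdata}.\ref{E:supportone})--(\ref{SS:supportdata}.\ref{E:supportfour}) and (\ref{SS:supportdata}.\ref{E:supportsix}) for $V_{(\fz,\fz_{\0})}$ are already recorded as the ``pre-support data'' properties in \cref{SS:relativecohomology}. Second, the tensor product property (\ref{SS:supportdata}.\ref{E:supportfive}), $V(M \otimes N) = V(M) \cap V(N)$, together with the faithfulness property (\ref{SS:supportdata}.\ref{E:supportseven}), $V(M) = \varnothing \iff M = 0$ (i.e.\ $M$ projective in $\CC_{(\fz,\fz_{\0})}$, equivalently stably trivial), both follow by transporting along the bijection ${\beta}^{*}: \VV_{(\fz,\fz_{\0})}(M) \to V^{r}_{\fz_{\1}}(K \otimes M)$ to the rank-variety side: the tensor product property for $V^{r}_{\fz_{\1}}$ is the Proposition proved just above, and faithfulness is \cref{T:Dade} (Dade's Lemma). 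Third, for the extension claim one applies \cref{T:AbstractSupportProperties} to get (\ref{SS:supportdata}.\ref{E:supportone})--(\ref{SS:supportdata}.\ref{E:supportfour}) and $\VV(\unit) = X$ automatically for $\VV$, and then one must check (\ref{SS:extend}.\ref{E:extendone1})--(\ref{SS:extend}.\ref{E:extendthree3}): again the rank-variety translation plus the arguments of \cite[Sections 9--10]{BCR:96} handle $\VV(M \otimes N) = \VV(M) \cap \VV(N)$, the agreement $\VV(M) = V(M)$ on compact objects, and the faithfulness of $\VV$.

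The real content, and the step I expect to be the main obstacle, is the Realization Property (\ref{SS:supportdata}.\ref{E:supporteight}): given any closed $W \in \XX$, one must produce $M \in \bK^{c} = \operatorname{Stab}(\FF_{(\fz,\fz_{\0})})$ with $V_{(\fz,\fz_{\0})}(M) = W$. By the maximal-spectrum reduction at the end of \cref{SS:supportdata} it suffices to realize closed conical subvarieties of $\fz_{\1}$ (equivalently, points/subvarieties of $X_{\max}$), and the natural tool is the analogue of Carlson's $L_{\zeta}$-modules: for a homogeneous $\zeta \in R = S^{\bullet}(\fz_{\1}^{*})$ one takes the cone of the corresponding map $\widehat{\zeta}: \Omega^{|\zeta|}\C \to \C$ in $\operatorname{Stab}(\FF_{(\fz,\fz_{\0})})$, shows its cohomological support is the hypersurface $Z(\zeta)$, and intersects (via tensor product, using property (\ref{SS:supportdata}.\ref{E:supportfive})) finitely many such to cut out an arbitrary closed conical set; one also needs that these Carlson-type modules indeed lie in $\FF_{(\fz,\fz_{\0})}$, which uses the exterior/Clifford-algebra structure of $U(\fz/\fz_{\0}) \cong \Lambda^{\bullet}(\fz_{\1})$ and the block decomposition from \cite[Section 5.2]{BKN2} as in the proof of \cref{T:Dade}. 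Since $\fz_{\1}$ behaves like the restricted part of an elementary abelian $2$-group, this realization is essentially the one in \cite{BCR:96}; I would carry it out by reducing to a single weight space $M_{\la} \otimes L(-\la)$ over $\Lambda^{\bullet}(\fz_{\1})$ and citing the Carlson module construction there. With realization in hand, all hypotheses of \cref{I:bijectiongeneral} and \cref{K:bijectiongeneral} are met, giving both the classification of thick tensor ideals and the homeomorphism $\operatorname{Spc}(\bK^{c}) \cong \operatorname{Proj}(\operatorname{Spec}(R))$.
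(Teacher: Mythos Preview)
Your overall strategy matches the paper's: verify that $V=V_{(\fz,\fz_{\0})}$ is a support data on $\bK^{c}$ satisfying (\ref{SS:supportdata}.\ref{E:supportseven}) and (\ref{SS:supportdata}.\ref{E:supporteight}), that $\VV=\VV_{(\fz,\fz_{\0})}$ of \cref{D:abstractsupport2} extends it, and then invoke \cref{I:bijectiongeneral} and \cref{K:bijectiongeneral}. The realization step via Carlson $L_{\zeta}$ modules and the tensor product property is exactly what the paper does (your extra reduction to a single weight space is not needed---the $L_{\zeta}$ are automatically in $\FF_{(\fz,\fz_{\0})}$ as kernels of maps between finite dimensional modules).

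There is, however, a logical ordering problem in how you obtain (\ref{SS:supportdata}.\ref{E:supportfive}) and (\ref{SS:supportdata}.\ref{E:supportseven}). You propose to deduce both by transporting along $\beta^{*}:\VV_{(\fz,\fz_{\0})}(M)\to V^{r}_{\fz_{\1}}(K\otimes M)$. But $\VV$ is defined in \cref{D:abstractsupport2} from the thick tensor ideals $\bI_{W}$ of \cref{D:abstractsupport}, and to know $\bI_{W}$ is a tensor ideal one already needs $V$ to be a support data---in particular one needs (\ref{SS:supportdata}.\ref{E:supportfive}). So you cannot use $\beta^{*}$ (which is about $\VV$) to establish (\ref{SS:supportdata}.\ref{E:supportfive}) for $V$ without circularity. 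The paper avoids this: it first asserts that for compact (finite dimensional) $M$ the cohomological variety $V_{(\fz,\fz_{\0})}(M)$ already has a concrete rank-variety description over $\C$ (this is established in earlier BKN work, analogous to Avrunin--Scott/Carlson for finite groups), and that gives (\ref{SS:supportdata}.\ref{E:supportfive}) directly. Only after $V$ is known to be a support data does one define $\VV$ and then invoke the $\beta^{*}$ correspondence with $V^{r}_{\fz_{\1}}(K\otimes -)$ to verify the extension properties (\ref{SS:extend}.\ref{E:extendone1})--(\ref{SS:extend}.\ref{E:extendthree3}).

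For (\ref{SS:supportdata}.\ref{E:supportseven}) the paper also takes a different and more direct route than yours: rather than going through Dade's Lemma and $\beta^{*}$, it invokes \cite[Theorem 2.9.1]{BKN3}, using that each block of $\FF_{(\fz,\fz_{\0})}$ has only finitely many simple modules. That argument is independent of the rank-variety machinery and so does not share the circularity concern above.
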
 

\begin{proof} Since $\fz_{\0}$ acts trivially on $\fz_{\1}$ via the adjoint action, $R=S^{\bullet}(\fz_{\1}^{*})\cong \operatorname{H}^{\bullet}(\fz,\fz_{\0};\C)$. Set $\bK=\operatorname{Stab}(\CC_{(\fz,\fz_{\0})})$ 
and $\bK^{c}=\operatorname{Stab}(\FF_{(\fz,\fz_{\0})})$.  In this case the cohomological variety  $V_{(\fz,\fz_{0})}(-)$  of \cref{SS:relativecohomology}  has, by \cref{P:beta*bijection}, a concrete ``rank variety'' type description. So by applying \cref{P:TensorProductForz1Rank}, (\ref{SS:supportdata}.\ref{E:supportfive}) holds 
and hence $V_{(\fz,\fz_{\0})}$ is a support data. Moreover, by using (\ref{SS:supportdata}.\ref{E:supportfive}) and Carlson's $L_{\zeta}$ modules (see \cref{SS:Weight0} for another use of the same technique) one  can show that (\ref{SS:supportdata}.\ref{E:supporteight})  is satisfied. 

Since $\fz$ has only finitely many simple modules in each block, property (\ref{SS:supportdata}.\ref{E:supportseven}) holds by \cite[Theorem 2.9.1]{BKN3}. 
Given $V_{(\fz,\fz_{\0})}$ let $\VV_{(\fz,\fz_{\0})}$ be defined as in \cref{D:abstractsupport2}. Using $\beta^{*}$ to identify $\VV_{(\fz , \fz_{\0})}(M)$ with its rank variety as in \cref{P:beta*bijection}, one can transport the earlier results in this section to $\VV_{(\fz , \fz_{\0})}$ and
verify that (\ref{SS:extend}.\ref{E:extendone1}), (\ref{SS:extend}.\ref{E:extendtwo2}) and  (\ref{SS:extend}.\ref{E:extendthree3}) hold for $\fz$.  In particular, 
one can conclude that $\VV_{(\fz,\fz_{\0})}$ extends $V_{(\fz,\fz_{\0})}$. The results now follow from \cref{I:bijectiongeneral,K:bijectiongeneral}. 
\end{proof}

Since $R$ is also the cohomology ring for $\CC_{(\fz,\fz_{\0})}$ it follows that $\bK$ is an $R$-linear triangulated category in the sense of \cite{BIK:12}.  By \cite[Corollary 4.7]{BIK:12} the local global principle holds and by  \cref{SS:notationandconventions}, one has that 
$\bK$ is Noetherian in the sense of \cite[Definition 4.21]{BIK:12}.  We expect that $R$ stratifies $\bK$ and that the previous theorem can be deduced from \cite[Theorem 4.23]{BIK:12}.  

Note that one could also directly connect $\bK^{c}$ to the stable module category of finite-dimensional supermodules for the exterior algebra $\Lambda(\fz_{\1})$ (viewed as a superalgebra by declaring the generators to be odd) as follows.  Since $\fz_{\0}$ is a torus which commutes with $\fz$ the weight space decomposition of any $\fz$-supermodule is a decomposition as $\fz$-modules and there are corresponding decompositions $\CC_{(\fz,\fz_{\0})} = \oplus_{\lambda \in \fz^{*}_{\0}}\CC_{\lambda}$ and $\FF_{(\fz,\fz_{\0})} = \oplus_{\lambda \in \fz^{*}_{\0}}\FF_{\lambda}$.  Modules in the principal blocks $\CC_{0}$ and $\FF_{0}$ are annihilated by the ideal $I$ of $U(\fz)$ generated by $U(\fz_{\0})$ and so are naturally modules for the superalgebra $U(\fz )/I \cong \Lambda(\fz_{\1})$.  In this way we see that  $\CC_{0}$ (resp.\  $\FF_{0}$) is isomorphic to the category of all (resp.\ finite dimensional) supermodules for $\Lambda(\fz_{\1})$.  The same is true for their stable categories.  Furthermore, under this identification there is a bijection between the thick tensor ideals given by projecting an ideal $I$ of $\bK^{c}$ onto the principal block: $I \mapsto I \cap \bK^{c}_{0}$.  In this way the problem of identifying the Balmer spectrum and its support variety theory can be reduced to doing so for the superalgebra $\Lambda(\fz_{\1})$.  Such questions have been considered elsewhere (e.g.\ \cite{CarIyen, Ste, AAH}) in somewhat different settings.

\subsection{Support Data for Type I superalgebras} We will now investigate the construction of support data via cohomological support varieties for the classical Lie superalgebra $\fg$, its detecting subalgebras $\fe$ and $\ff$, and the relation between them. 

\begin{example} \label{ex:abelianLiesuper} Let $\fz=\fz_{\1}$ (i.e., $\fz_{\0}=0$) be an abelian Lie superalgebra consisting of odd degree elements.  Here $R=\HH^{\bullet}(\fz,\fz_{\0};\C)\cong S^{\bullet}(\fz_{\1}^{*})$. 
From Theorem~\ref{T:torusclassification} the thick tensor ideals of $\bK^{c}$ are in bijective correspondence with specialization closed subsets of $\operatorname{Proj}(R)$ and 
$\operatorname{Spc}(\bK^{c})$ identifies with $\operatorname{Proj}(R)$.
Note that this is analogous to the case of the group algebra of an elementary abelian $p$-group. 
\end{example} 

\begin{example}\label{ex:fesupport} Let $\fz$ denote the detecting subalgebra $\fe$ or $\ff$ of \cref{SS:detectingsubalgebras}.  In either case $\fz_{\0}$ acts trivially on $R\cong S^{\bullet}(\fz_{\1}^{*})$ and arguing as in the preceding example, we can classify thick tensor ideals in $\bK^{c}$ by specialization closed subsets of $\operatorname{Proj}(R)$, and $\operatorname{Spc}(\bK^{c})\cong \operatorname{Proj}(R)$. 

\end{example} 

\begin{example} \label{ex:cohosupport}  Let $\fg$ be a classical Lie superalgebra of Type I admitting both a stable and 
polar action of $G_{\0}$ on $\fg_{\1}$.  Let $R=\operatorname{H}^{\bullet}(\fg,\fg_{\0};\C)$. The ring homomorphisms given in \cref{eq:cohoiso} induce morphisms of topological spaces,
$$
{V}_{(\e,\e_{\0})}(\C)\stackrel{\text{res}^{*}}\longrightarrow {V}_{(\f,\f_{\0})}(\C)\stackrel{\text{res}^{*}}\longrightarrow {V}_{(\g,\g_{\0})}(\C),
$$
and isomorphisms (by passing to quotients),
$$
{V}_{(\e,\e_{\0})}(\C)/\!/W\stackrel{\text{res}^{*}}\longrightarrow {V}_{(\f,\f_{\0})}(\C)/\!/N\stackrel{\text{res}^{*}}\longrightarrow {V}_{(\g,\g_{\0})}(\C).
$$
Let $M$ be in $\bK^{c}=\operatorname{Stab}(\FF_{(\fg, \fg_{\0})})$. Then $\res^{*}$ induces maps
between the cohomologically defined pre-support data:
$${V}_{(\e,\e_{\0})}(M) \rightarrow {V}_{(\f,\f_{\0})}(M) \rightarrow {V}_{(\g,\g_{\0})}(M).$$ 
Since $M$ is a $G_{\0}$-module it follows that ${V}_{(\e,\e_{\0})}(M)$ (resp.\ $ {V}_{(\f,\f_{\0})}(M)$) is invariant under the action of $W$ (resp.\ $N$).  Thus we obtain induced embeddings  between the geometric quotients (cf.\  \cite[(6.1.3)]{BKN1}):
\begin{equation}\label{E:embeddings}
{V}_{(\e,\e_{\0})}(M)/\!/W \hookrightarrow {V}_{(\f,\f_{\0})}(M)/\!/N
\hookrightarrow {V}_{(\g,\g_{\0})}(M).
\end{equation}


By \cite[Theorem 5.1.1(b)]{LNZ}  the maps in \cref{E:embeddings} are in fact homeomorphisms.  Therefore, we can use the fact that (\ref{SS:supportdata}.\ref{E:supportone})--(\ref{SS:supportdata}.\ref{E:supportsix}) hold for $V_{(\e,\e_{\0})}$ to see that they also hold for $V_{(\g,\g_{\0})}$ (cf.\ \cite[Section 5.2]{LNZ}).  By \cite[Section 5.2]{LNZ} property (\ref{SS:supportdata}.\ref{E:supporteight}) holds as well.  

The preceding arguments demonstrate that $V_{(\fg, \fg_{\0})}$ is indeed a support data for $\bK^{c}$. However, 
we cannot use $V_{(\fg,\fg_{\0})}$ to classify thick tensor ideals or compute the Balmer spectrum 
because property (\ref{SS:supportdata}.\ref{E:supportseven}) does not hold in general. For example, for any Kac module $K^{+}(\lambda)$ for $\fg=\mathfrak{gl}(m|n)$, $V_{(\fg,\fg_{\0})}(K^{+}(\lambda))=\varnothing$  by \cite[Corollary 3.3.1]{BKN2}, but it is well known that Kac modules need not be projective in $\FF_{(\fg,\fg_{\0})}$. 

We remark that $\bK$ is an $R$-linear triangulated category and the local global principal holds by \cite[Corollary 4.7]{BIK:12} but in general the minimality condition of \emph{loc.\ cit.}\ fails and, hence, $R$ fails to stratify $\bK$.  This can already be seen in the case of $\fg =\gl (1|1)$.
\end{example}

\begin{example}\label{ex:dufloserganova} Consider the associated variety for a classical Lie superalgebra $\fa$ defined by Duflo and Serganova \cite{dufloserganova}. 
Let $X=\operatorname{Proj}(\{x\in \fa_{\1} \mid [x,x]=0\})$, a projective variety over $\C$, and let $\XX_{cl}$ be the set of closed subsets of $X$. 
Define $V:\bK^{c}\rightarrow \XX_{cl}$ in the following way.  For each $M$ in $\bK^{c}$, let 
$$V(M):=X_{M}=\operatorname{Proj}\left(  \{ x\in X \mid \text{$M$ is not free as a $U(\langle x \rangle)$-module} \}\right).$$ 

The assignment $M \mapsto V(M)$ satisfies properties (\ref{SS:supportdata}.\ref{E:supportone})--(\ref{SS:supportdata}.\ref{E:supportsix}). In many instances, for example when $\fa=\gl(m|n)$, 
$V$ satisfies (\ref{SS:supportdata}.\ref{E:supportseven}). Moreover, one can check that (\ref{SS:supportdata}.\ref{E:supporteight}) holds for $\gl(1|1)$ and $\gl(2|2)$. These low rank examples also show that $V$ does not classify the thick tensor ideals of $\bK^{c}$.  From the point of view of our setup what fails is that $X$ is not a Zariski space. 
\end{example} 

\begin{example}\label{ex:supportsforstandardparabolics} Let $\fg=\mathfrak{gl}(m|n)$.  The algebra $\fg$ is Type I with $\fg=
\fg_{-1}\oplus \fg_{0} \oplus \fg_{1}$ where $\fg_{0}\cong \gl(m)\oplus \gl(n)$. 
Set $\fq^{+}=\fg_{0}\oplus \fg_{1}$ and note that it is a classical Lie superalgebra. We will investigate the cohomologically defined support data for this parabolic subalgebra, 
$\bK = \operatorname{Stab}\left(\CC_{(\fq^{+}, \fg_{0})} \right)$, and the classification of thick tensor ideals in $\bK^{c}=\operatorname{Stab}\left( \F_{(\fq^{+},\fg_{0})}\right)$.

We first consider the Lie subsuperalgebra $\fg_{1}$.  Let $R=\HH^{\bullet}(\fg_{1},\C)=\HH^{\bullet}(\fg_{1}, \{0 \};\C)\cong S^{\bullet}(\fg_{1}^{*})$ with $Y=\operatorname{Proj}(R)$. 
Since $\fg_{1}$ is an abelian Lie superalgebra consisting of odd degree elements one has a support data 
$V_{\fg_{1}}:\bL^{c}\rightarrow \YY_{cl}$ where $\bL$ is the stable module category of $\CC_{(\fg_{1}, \{0 \})}$ as 
in Example~\ref{ex:abelianLiesuper} and $\YY_{cl}$ is the collection of all closed subsets of $Y$. Moreover, let $\VV_{\fg_{1}}: \bL \to \YY$ be the extension of $V_{\g_{1}}$ as 
used in Theorem~\ref{T:torusclassification}. 

Using the support data $V_{\fg_{1}}$ define a support data $\widehat{V}:\bK^{c}\rightarrow \YY_{cl}$ by $\widehat{V}(M)=V_{\fg_{1}}(M)$. Properties 
(\ref{SS:supportdata}.\ref{E:supportone})--(\ref{SS:supportdata}.\ref{E:supportsix}) follow because $V_{\g_{1}}$ is a support data for $\bL^{c}$ and restriction is an exact functor. 
For (\ref{SS:supportdata}.\ref{E:supportseven}), one can use the fact that a module in $\CC_{(\fq^{+},\fg_{0})}$ is projective (equivalently, its complexity is zero)
if and only if it is projective over $U(\fg_{1})$ \cite[Theorem 3.3.1]{BKN4}; note that there only finite dimensional modules are considered but the relevant part of the proof does not use this fact. 
Thus one can define an extension $\widehat{\VV}:\bK\rightarrow \YY$ of $\widehat{V}$ by setting $\widehat{\VV}(M)=\VV_{\fg_{1}}(M)$. 

Now observe that the group $G_{0}=GL(m)\times GL(n)$  acts on $R$ and so we may consider $X=Y_{G_{0}}=G_{0}\text{-}\operatorname{Proj}(R)$.  
If $M\in \bK^{c}$ then $\widehat{V}(M)$ is a $G_{0}$-invariant closed set in $Y$ and thus the support data $\widehat{V}$ can be viewed as having image in $\XX_{cl}$ via the map $\rho: Y \to X$ given in \cref{SS:zariski}. 
In order to show that $\widehat{V}:\bK^{c}\rightarrow \XX_{cl}$ classifies thick tensor ideals we need to verify that (\ref{SS:supportdata}.\ref{E:supporteight}) holds. 
The number of $G_{0}$-orbits on $Y_{\text{max}}$ is finite with orbit representatives given by matrices $I_{t}$
in $\fg_{1}$ of rank $t$ with $0\leq t \leq r$ and $r=\text{min}(m,n)$. Furthermore, $\overline{G_{0}\cdot I_{s}}\subseteq
\overline{G_{0}\cdot I_{t}}$ if and only if $s\leq t$.  The closed $G_{0}$-invariant sets in $Y_{\text{max}}$ have the form $W_{t}=\overline{G_{0}\cdot I_{t}}$ for some $0\le t\leq r$.  

There certainly exists a module $N\in \FF_{(\fq^{+},\fg_{0})}$ 
such that $V_{\fg_{1}}(N) = W_{t}$; for example, one may take $N$ to be a simple module of atypicality $t$ (see \cite[(3.8.1)]{BKN3}). 
We can now conclude that $\Gamma$ and $\Theta$ yield a bijective correspondence between thick tensor ideals of $\operatorname{Stab}(\F_{(\fq^{+},\fg_{0})})$ and 
specialization closed $G_{0}$-invariant subsets of  $\operatorname{Proj}(S^{\bullet}(\g_{1}^{*}))$ (i.e., specialization closed subsets of $X$). 

Let $M_{t}$ be the maximal ideal in $R$ corresponding to the point $I_{t}$. 
According to \cite[Proposition 14]{Lorenzpaper}, $X=Y_{G_{0}}$ is finite and $X=\{\cap_{g} gM_{t}:\ 1\leq t \leq r\}$. Furthermore, irreducible 
sets of $X$ are ordered by inclusion: $\overline{\{\cap_{g}\ gM_{s} \}} \subseteq \overline{\{\cap_{g}\ gM_{t}\}}$ if and only if $s\leq t$. 
Consequently, every specialization closed set in $X$ is irreducible, thus all thick tensor ideals in $\operatorname{Stab}(\F_{(\fq^{+},\fg_{0})})$ are prime.   

An identical analysis applies also to $\fq^{-}= \fg_{0}\oplus \fg_{-1}$ using the support data $\widehat{V}(M)=V_{\fg_{-1}}(M)$.

\end{example}

\section{Classification of Thick Tensor Ideals for \texorpdfstring{$\mathfrak{gl}(m|n)$}{gl(m|n)}} \label{S:MainTheorem}

\subsection{}\label{SS:glmnsupportdata}  Let $\fg=\gl(m|n)$ and let $\bK = \operatorname{Stab}\left(\CC_{(\fg , \fg_{\0})} \right)$ with $\bK^{c}=  \operatorname{Stab}\left(\FF_{(\fg , \fg_{\0})} \right)$.  
Our goal is to construct a support data $\widehat{V}:\bK^{c}\rightarrow \XX_{cl}$. 

First, let $\bL$ (resp.\ $\bL^{c}$) be the stable module category for $\CC_{(\ff,\ff _{\0})}$ (resp.\ $\FF_{(\ff,\ff_{\0})}$).  Set 
\[
R=\HH^{\bullet}(\ff,\ff_{\0};\C )\cong S^{\bullet}(\ff_{\1}^{*}), 
\] $Y=\operatorname{Proj}(R)$, $\YY_{cl}$ to be the closed subsets of $Y$, and $\YY$ to be the set of all subsets of $Y$.  
From \cref{ex:fesupport} there exists a cohomologically defined support data $V:\bL^{c}\rightarrow \YY_{cl}$ with an extension $\VV :\bL \to \YY$.

Now define $\widehat{V}:\bK^{c}\rightarrow \YY_{cl}$ and $\widehat{\VV}:\bK\rightarrow \YY$ by setting $\widehat{V}(M)=V(M)$ and $\widehat{\VV}(M) = \VV (M)$, respectively.  Let  $N=\operatorname{Norm}_{G_{\0}}(\ff_{\1})$.  Then $N$ acts on $R$ and so 
set $X=N\text{-}\Proj(R)$ which is a Zariski space. 
Let $\XX_{cl}$ be the closed sets in $X$ and let $\XX$ be the collection of all subsets of $X$. Any $M$ in $\FF_{(\fg,\fg_{\0})}$ is a $G_{\0}$-module, and hence $V(M)$ will be an $N$-invariant closed set.  As a consequence we can and will view $\widehat{V}(M)$ as lying in $\XX_{cl}$ 
under the map $\rho:\operatorname{Proj}(R)\rightarrow N\text{-}\Proj(R)$ from \cref{SS:zariski}. That is, 
$\widehat{V}: \bK^{c} \to \XX_{cl}$. Similarly, one can view the image of $\widehat\VV$  as lying in $\XX$ under the map $\rho$, and hence consider $\widehat\VV$  as a map
\[
\widehat{\VV}: \bK \to \XX.
\]

The fact that $V$ is a support data shows that (\ref{SS:supportdata}.\ref{E:supportone})--(\ref{SS:supportdata}.\ref{E:supportsix}) hold for $\widehat{V}$.  The fact that $\VV$ extends $V$ immediately implies $\widehat{\VV}$ also satisfies (\ref{SS:supportdata}.\ref{E:supportone}), (\ref{SS:supportdata}.\ref{E:supporttwo}),  (\ref{SS:supportdata}.\ref{E:supportfour}), (\ref{SS:supportdata}.\ref{E:supportfive}), and \cref{D:extends}(ii).  Since the restriction functor is exact, the fact that $\VV$ satisfies (\ref{SS:supportdata}.\ref{E:supportthree}) implies that $\widehat{\VV}$ does as well.   Using \cite[Theorem 3.3.1]{LNZ} we see that a module $M\in \CC_{(\fg,\fg_{\0})}$ is projective if and only if its restriction is projective in $\CC_{(\ff,\ff_{\0})}$; note that in \emph{loc.\  cit.}\ they assume $M$ lies in $\FF_{(\fg , \fg_{\0})}$ but the proof of the needed results does not use this fact.  This along with the fact that $\VV$ satisfies (\ref{SS:supportdata}.\ref{E:supportseven}) implies it also holds for $\widehat{\VV}$.  In short, $\widehat{\VV}$ extends $\VV$.

To apply the results of \cref{S:classificationtheorems} it only remains to verify that $\widehat{V}$ satisfies (\ref{SS:supportdata}.\ref{E:supporteight}).  This verification involves results on geometric induction which will be obtained in the subsequent sections. Using the discussion in \cref{SS:zariski} and the rank variety description of $V$, we can reduce this problem to showing that for any $N$-invariant 
closed subvariety $W$ in $\operatorname{Proj}(\ff_{\1})$ there exists a module $M$ in $\FF_{(\fg,\fg_{\0})}$ such that 
$$W=\widehat{V}_{\max}(M)\cong V^{r}_{\f_{\1}}(M):= \operatorname{Proj}(\{x\in \ff_{\1}\mid \text{$M$ is not projective as a $U(\langle x \rangle)$-module}\}).$$
This will be proved in \cref{S:VarietiesandGeometricInduction} and the notation $V^{r}_{\f_{\1}}(M)$ will be used for the remainder of the paper.  

\subsection{} Assuming (\ref{SS:supportdata}.\ref{E:supporteight})  is verified for $\widehat{V}: \bK^{c} \to \XX_{cl}$, we can now state the main result.  Namely, $\widehat{V}$ provides a classification of the thick tensor ideals for $\FF_{(\gl(m|n), \gl(m|n)_{\0})}$ and $X$ provides a realization of the Balmer spectrum for this category. These results are consequences of our general setup in \cref{SS:classifying,SS:balmer}. 

\begin{theorem} \label{T:glmnclassification}
Let $\fg=\glmn$, let $\ff$ be the detecting subalgebra of $\fg$, and let $N=\operatorname{Norm}_{G_{\0}}(\ff_{\1})$. Then there is a bijection between
the set of thick tensor ideals of $\operatorname{Stab}(\F_{(\fg,\fg_{\0})})$
and the set of specialization closed subsets of $N\text{-}\Proj(S^{\bullet}(\ff_{\1}^{*}))$.
\end{theorem}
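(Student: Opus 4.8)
The plan is to deduce \cref{T:glmnclassification} directly from the general classification machinery established in \cref{SS:classifying} by verifying its hypotheses for the support data $\widehat{V}\colon \bK^{c}\to \XX$ constructed in \cref{SS:glmnsupportdata}. Recall that there we have already checked that $\widehat{V}$ is a support data satisfying \cref{SS:supportdata}.\ref{E:supportone}--\cref{SS:supportdata}.\ref{E:supportsix}, that \cref{SS:supportdata}.\ref{E:supportseven} holds (using that restriction from $\CC_{(\fg,\fg_{\0})}$ to $\CC_{(\ff,\ff_{\0})}$ detects projectivity, via \cite[Theorem 3.3.1]{LNZ}), and that $\widehat{\VV}\colon\bK\to\XX_{sp}$ extends $\widehat{V}$. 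Thus the only missing hypothesis of \cref{I:bijectiongeneral} is the Realization Property \cref{SS:supportdata}.\ref{E:supporteight}: every closed subset $W\in\XX$ must arise as $\widehat{V}(M)$ for some $M\in\bK^{c}$.

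First I would reduce the Realization Property to a concrete geometric statement about $\ff_{\1}$. By the discussion in \cref{SS:zariski} together with the observation at the end of \cref{SS:supportdata} (that it suffices to verify \cref{SS:supportdata}.\ref{E:supporteight} for $X_{G,\max}$, i.e.\ to realize the $N$-invariant closed subsets of $Y_{\max}$), and by the rank-variety description $\widehat{V}_{\max}(M)\cong V^{r}_{\ff_{\1}}(M)$ of \cref{ex:fesupport}, it suffices to prove: for every $N$-invariant closed conical subvariety $W\subseteq \ff_{\1}$ there is a module $M\in\FF_{(\fg,\fg_{\0})}$ with $V^{r}_{\ff_{\1}}(M)=W$ (equivalently, with $W=\operatorname{Proj}$ of this set inside $\operatorname{Proj}(\ff_{\1})$). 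This is exactly the realization result flagged in \cref{SS:glmnsupportdata} as being the subject of \cref{S:VarietiesandGeometricInduction}, so in the body of the proof I would simply cite that result (once proved) to supply \cref{SS:supportdata}.\ref{E:supporteight}.

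With all hypotheses of \cref{I:bijectiongeneral} in hand, the theorem follows immediately: \cref{I:bijectiongeneral} gives the pair of mutually inverse maps $\Gamma(\bI)=\bigcup_{M\in\bI}\widehat{V}(M)$ and $\Theta(W)=\bI_{W}$ between the thick tensor ideals of $\bK^{c}=\operatorname{Stab}(\FF_{(\fg,\fg_{\0})})$ and $\XX_{sp}$, the specialization closed subsets of $X=\operatorname{Proj}(N\text{-}\operatorname{Spec}(S^{\bullet}(\ff_{\1}^{*})))$. (As in \cref{ex:supportsforstandardparabolics} one may equivalently phrase $\XX_{sp}$ as the specialization closed $N$-invariant subsets of $\operatorname{Proj}(\operatorname{Spec}(S^{\bullet}(\ff_{\1}^{*})))$ via $\rho$.) I would also remark that \cref{K:bijectiongeneral} then yields the accompanying homeomorphism $f\colon X\xrightarrow{\;\simeq\;}\operatorname{Spc}(\bK^{c})$, giving the Balmer spectrum as promised in the introduction.

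The main obstacle is not in this deduction — which is essentially bookkeeping against the general theorems — but in the Realization Property itself. Constructing, for a prescribed $N$-invariant closed conical $W\subseteq\ff_{\1}$, an honest $\fg$-module in $\FF_{(\fg,\fg_{\0})}$ whose $\ff_{\1}$-rank variety is exactly $W$ is genuinely hard: unlike the finite group scheme setting one cannot simply use Carlson $L_{\zeta}$-modules, because the relevant cohomology classes live over $\ff$ rather than over $\fg$, and a naive pullback need not have the right support. The strategy (carried out in \cref{S:GeometricInduction,S:VarietiesandGeometricInduction}) is to build $M$ by geometric induction from a parabolic subalgebra, controlling $V^{r}_{\ff_{\1}}(M)$ through a careful analysis of how geometric induction interacts with support varieties, of the $N$-action on $\ff_{\1}$ and its orbit structure, and of the computational properties of the induction functor; this is where the real work lies, and the present theorem is contingent on it.
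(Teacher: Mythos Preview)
Your proposal is correct and matches the paper's own approach essentially verbatim: the paper states that \cref{T:glmnclassification} is a consequence of the general setup in \cref{SS:classifying} once (\ref{SS:supportdata}.\ref{E:supporteight}) is verified for $\widehat{V}$, with the remaining hypotheses having been checked in \cref{SS:glmnsupportdata} and the Realization Property supplied by \cref{S:VarietiesandGeometricInduction} (culminating in \cref{T:finalrealization}). Your account of where the real difficulty lies and how it is resolved is also accurate.
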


\begin{theorem}\label{T:glmnBalmer}
Let $\fg=\glmn$, let $\ff$ be the detecting subalgebra of $\fg$, and let $N=\operatorname{Norm}_{G_{\0}}(\ff_{\1})$. Then there is a homeomorphism between $\operatorname{Spc}(\bK^{c})$ and 
$N\text{-}\Proj(S^{\bullet}(\ff_{\1}^{*}))$.  In particular, there is a bijection between the set of  prime thick tensor ideals of $\operatorname{Stab}(\F_{(\fg,\fg_{\0})})$ and the collection of irreducible $N$-stable closed subsets of $\operatorname{Proj}(S^{\bullet}(\ff_{\1}^{*}))$.
\end{theorem}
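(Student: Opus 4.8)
The plan is to obtain both statements as an immediate consequence of the general machinery in \cref{I:bijectiongeneral} and \cref{K:bijectiongeneral}, once the one outstanding hypothesis has been supplied. Those theorems require: a compactly generated TTC $\bK$ (established for $\bK=\operatorname{Stab}(\CC_{(\fg,\fg_{\0})})$ in \cref{S:classicalLiesuperalgebras}); a Zariski space $X$, which we take to be $X=\operatorname{Proj}(N\text{-}\operatorname{Spec}(S^{\bullet}(\ff_{\1}^{*})))$ (a Zariski space by the discussion in \cref{SS:zariski}); a support data $\widehat{V}\colon\bK^{c}\to\XX$ satisfying (\ref{SS:supportdata}.\ref{E:supportseven}) and (\ref{SS:supportdata}.\ref{E:supporteight}); and an extension $\widehat{\VV}\colon\bK\to\XX_{sp}$ of $\widehat{V}$. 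By \cref{SS:glmnsupportdata}, $\widehat{V}$ is a support data satisfying (\ref{SS:supportdata}.\ref{E:supportseven}), and $\widehat{\VV}$ extends it; the only property still to be checked is the Realization Property (\ref{SS:supportdata}.\ref{E:supporteight}).

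Accordingly, the first task is to verify (\ref{SS:supportdata}.\ref{E:supporteight}) for $\widehat{V}$. Using the reduction to maximal ideal spectra from \cref{SS:supportdata}, the map $\rho$ of \cref{SS:zariski}, and the rank-variety description of $\widehat{V}$ coming from \cref{ex:fesupport} and \cref{T:torusclassification}, it suffices to realize every $N$-invariant closed conical subvariety $W\subseteq\ff_{\1}$ as $W=V^{r}_{\ff_{\1}}(M)$ for some $M\in\FF_{(\fg,\fg_{\0})}$. Since $\ff_{\1}$ is Noetherian, $W$ is a finite union of orbit closures $\overline{N\cdot x_{i}}$, and by property (\ref{SS:supportdata}.\ref{E:supporttwo}) of a support data it is enough to realize each $\overline{N\cdot x}$ individually and then take a direct sum. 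To produce such an $M$ I would use geometric induction from a parabolic subalgebra: a careful analysis of the $N$-action on $\ff_{\1}$ (carried out in \cref{S:VarietiesandGeometricInduction}) dictates the choice of parabolic and of inducing module, and the support variety $V^{r}_{\ff_{\1}}$ of the induced module is computed via the compatibility of geometric induction with restriction to the rank-one subalgebras $\langle x\rangle$, using the structural results on the induction functor developed in \cref{S:GeometricInduction} and \cref{S:VarietiesandGeometricInduction}.

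Granting the Realization Property, \cref{I:bijectiongeneral} applied to $(\bK,X,\widehat{V},\widehat{\VV})$ gives mutually inverse bijections $\Gamma,\Theta$ between thick tensor ideals of $\bK^{c}=\operatorname{Stab}(\FF_{(\fg,\fg_{\0})})$ and specialization closed subsets of $X$, which is \cref{T:glmnclassification}. Then \cref{K:bijectiongeneral} produces a homeomorphism $f\colon X\to\operatorname{Spc}(\bK^{c})$, giving the first assertion of \cref{T:glmnBalmer}. For the last assertion, the points of $\operatorname{Spc}(\bK^{c})$ are the prime thick tensor ideals, and $f$ identifies them with the points of $X$; since $X$ is a Zariski space its points correspond bijectively to its irreducible closed subsets via generic points, and via $\rho$ — invoking the fact recorded in \cref{SS:zariski} that $\cap_{g}\,gP_{1}=\cap_{g}\,gP_{2}$ exactly when $\overline{G\cdot P_{1}}=\overline{G\cdot P_{2}}$ — these correspond precisely to the irreducible $N$-stable closed subsets of $\operatorname{Proj}(\operatorname{Spec}(S^{\bullet}(\ff_{\1}^{*})))$. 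Composing these bijections completes the proof.

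The heart of the matter, and the step I expect to be genuinely hard, is the Realization Property. In the finite group scheme setting one realizes closed conical subsets of the cohomology spectrum directly by means of Carlson (or $L_{\zeta}$) modules constructed inside the ambient category; here, by contrast, $\ff_{\1}$ is a proper subspace that detects cohomology only after restriction, so the realizing objects must be genuine $\fg$-modules whose restricted rank varieties are prescribed in advance. Geometric induction is the mechanism that bridges this gap, and the delicate points — which I anticipate will occupy the bulk of \cref{S:GeometricInduction} and \cref{S:VarietiesandGeometricInduction} — are (i) matching the orbit combinatorics of $N$ on $\ff_{\1}$ with the modules accessible via induction, and (ii) the explicit computation of the support varieties of geometrically induced modules.
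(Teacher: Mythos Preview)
Your overall structure is correct and matches the paper: deduce the homeomorphism from \cref{K:bijectiongeneral} once the Realization Property (\ref{SS:supportdata}.\ref{E:supporteight}) is in hand, and your reduction of realization to sets of the form $\overline{N\cdot x}$ (for $x$ a not-necessarily-closed point of $\operatorname{Proj}(\operatorname{Spec}(S^{\bullet}(\ff_{\1}^{*})))$) is equivalent to the paper's reduction in \cref{SS:varietynotations} to $\Sigma_{m}V$ with $V$ a $T$-irreducible closed set.

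One genuine oversimplification in your realization sketch: geometric induction by itself does \emph{not} realize every $\overline{N\cdot x}$. Such an orbit closure corresponds to a $T$-invariant prime ideal, and since $T$ is connected this forces the form $V(s,t,p)\cap Z(g_{1},\dots,g_{r})$ with the $g_{i}$ weight-zero polynomials (\cref{E:generalV}). The paper's argument in \cref{S:VarietiesandGeometricInduction} must assemble several tools: Carlson $L_{\zeta}$ modules to handle the weight-zero part $Z(g_{1},\dots,g_{r})$ (\cref{SS:Weight0}), simple $\fg$-modules for the diagonal varieties $\Sigma_{m}V(p,p,p)$ (\cref{T:simples}), geometrically induced modules only for the coordinate varieties $\Sigma_{m}V(0,t,0)$ (\cref{T:Vstprealizations}), the twist $\tau$ for $\Sigma_{m}V(s,0,0)$, and then tensor products to build up the general case (\cref{P:CoordsNoOverlap,P:CoordsOverlap,P:mixed,T:finalrealization}). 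So while your deferral to \cref{S:GeometricInduction,S:VarietiesandGeometricInduction} is appropriate, the statement ``to produce such an $M$ I would use geometric induction'' understates what is needed.
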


Let us also mention that the analogues of the above theorems are also true for the type $C$ Lie superalgebra $\mathfrak{osp}(2|2n)$. It is a Type I Lie superalgebra of defect $1$ and with representation theory quite similar to that of $\mathfrak{gl}(1|1)$.  In particular, it has a detecting subalgebra which has all the requisite properties and, like $\mathfrak{gl}(1|1)$, realization can be achieved without parabolic induction by using only Carlson's $L_{\zeta}$ modules, Kac modules, and dual Kac modules.

\section{Geometric Induction}\label{S:GeometricInduction}

\subsection{}\label{SS:IntroGeometricInduction} In order to verify  (\ref{SS:supportdata}.\ref{E:supporteight}) for $\fg = \gl (m|n)$ we introduce modules which are constructed via geometric induction from parabolic subgroups.  Let $G$ denote the supergroup $\operatorname{GL}(m|n)$ and let $P$ be a parabolic subgroup of $G$.  By parabolic subgroup we will always mean the stabilizer of a flag of subsuperspaces in the natural representation of $G$.  In particular, define the \emph{standard parabolic} $Q=Q^{+}$ to be the subgroup of block upper triangular matrices with Levi subgroup isomorphic to $G_{\0}=\operatorname{GL}(m) \times \operatorname{GL}(n)$.  Set $Q^{-}$ to be the \emph{opposite standard parabolic} of block lower triangular matrices with Levi subgroup isomorphic to $G_{\0}$.

 We define induction from $P$-modules to $G$-modules using Lie superalgebras as in \cite{serganova6}.  See also \cite{GS} and references therein for further details on parabolic induction.  Let $\fp$ be the Lie subsuperalgebra of $\fg$ corresponding to $P$, let $\fu^{+}$ denote the nilpotent radical of $\fp$, $\fl$ the Levi subalgebra of $\fp$, and $\fu^{-}$ the opposite nilpotent radical.  Thus $\fp = \fl \oplus \fu^{+}$ and $\fg = \fu^{-} \oplus \fp$.  Note that the $\Z$-grading on $\fg$ is compatible with our choice of standard parabolic in the sense that for $\fp = \operatorname{Lie}(Q^{+})$ we have $\fg_{0}=\fl$, $\fg_{1}=\fu^{+}$, and $\fg_{-1}=\fu^{-}$.
    
Given an arbitrary $\fg$-module $M$ let 
\[
\Gamma (M)=\left\{\,m \in M \mid U(\fg_{\0})m \text{ is finite dimensional} \,\right\}.
\]  It is straightforward to see that $\Gamma$ is an endofunctor on the category of $\fg$-modules and that it coincides with the analogous functor defined by replacing $\fg_{\0}$ with $\fg$.  The functor $\Gamma$ is left exact and we let $\Gamma^{i}$ denote its $i$th right derived functor.  In particular, we define a functor from $P$-modules to $G$-modules by
\[
H^{i}(G/P, N) = \Gamma^{i}\left(\Hom_{U(\fp )}(U(\fg), N ) \right),
\]  
where $\Hom_{U(\fp )}(U(\fg), N )$ is a left $U(\fg)$-module via $(u.f)(u')=(-1)^{(\bar{f}+\bar{u'})\bar{u}} f(u'u)$.

As $\fg_{\0}$-modules these modules have the following useful alternative description.  For any finite dimensional $\fp$-module $N$ we have
\[
\Hom_{U(\fp )}\left(U(\fg ),  N^{*} \right) \cong \Hom_{U(\fp_{\0})}\left(U(\fg_{\0}), \left[\Lambda^{\bullet}(\fg_{\1}/\fp_{\1}) \otimes N \right]^{*} \right),
\] where the isomorphism is as $\fg_{\0}$-modules \cite[Section 4]{serganova6}.  Applying $\Gamma^{i}$ to both sides yields the following result.  

\begin{prop}\label{P:KeyFact}  Let $N$ be a finite dimensional $P$-module.  Then for all $i \geq 0$ there is an isomorphism of $\fg_{\0}$-modules
\[
H^{i}(G/P, N^{*}) \cong H^{i}(G_{\0}/P_{\0}, \left[ \Lambda^{\bullet}(\fg_{\1}/\fp_{\1}) \otimes N\right]^{*}).
\]
\end{prop}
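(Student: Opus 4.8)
The plan is to deduce this from the $\fg_{\0}$-module isomorphism
\[
\Hom_{U(\fp )}\left(U(\fg ),  N^{*} \right) \cong \Hom_{U(\fp_{\0})}\left(U(\fg_{\0}), \left[\Lambda^{\bullet}(\fg_{\1}/\fp_{\1}) \otimes N \right]^{*} \right)
\]
cited from \cite[Section 4]{serganova6} by applying the derived functors $\Gamma^{i}$ to both sides. The key point is that both sides of this isomorphism are the same $\fg_{\0}$-module, so their derived functors agree once we know that $\Gamma^{i}$ is computed the same way on each. First I would recall that, as noted just before the statement, the functor $\Gamma$ defined using $\fg_{\0}$ agrees with the one defined using $\fg$; in particular it only depends on the underlying $\fg_{\0}$-module structure, and likewise for its right derived functors $\Gamma^{i}$. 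Therefore $\Gamma^{i}$ applied to the left-hand side, a priori a $\fg$-module computation, coincides with $\Gamma^{i}$ applied to it as a $\fg_{\0}$-module; and the latter is by definition $H^{i}(G_{\0}/P_{\0}, -)$ of the relevant $\fg_{\0}$-module.

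The main technical point to address is that forming right derived functors requires injective resolutions, and one must check that an injective resolution on one side maps to an injective resolution on the other, or equivalently that $\Gamma^{i}$ may be computed via a resolution that works simultaneously for both the $\fg$- and $\fg_{\0}$-module structures. Concretely, I would argue as follows: choose an injective resolution $N^{*} \to I^{\bullet}$ of $N^{*}$ in the category of $P$-modules (finite-dimensional, or in the appropriate category $\CC_{(\fp,\fp_{\0})}$). The functor $\Hom_{U(\fp)}(U(\fg), -) = \Coind_{\fp}^{\fg}(-)$ is exact (as $U(\fg)$ is free as a right $U(\fp)$-module by PBW) and sends injectives to injectives, so $\Hom_{U(\fp)}(U(\fg), I^{\bullet})$ is an injective resolution of $\Hom_{U(\fp)}(U(\fg), N^{*})$ in $\fg$-modules; applying $\Gamma$ and taking cohomology computes $H^{i}(G/P, N^{*})$. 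Under the displayed isomorphism this resolution is identified, as a complex of $\fg_{\0}$-modules, with $\Hom_{U(\fp_{\0})}(U(\fg_{\0}), [\Lambda^{\bullet}(\fg_{\1}/\fp_{\1}) \otimes I^{\bullet}])$, which is an injective resolution of $\Hom_{U(\fp_{\0})}(U(\fg_{\0}), [\Lambda^{\bullet}(\fg_{\1}/\fp_{\1}) \otimes N])^{*}$ in the category of $\fg_{\0}$-modules (here $\Lambda^{\bullet}(\fg_{\1}/\fp_{\1})$ is a fixed finite-dimensional module, so tensoring with it preserves injectivity of modules for the reductive pair $(\fg_{\0},\fp_{\0})$, and coinduction along $\fp_{\0}\le\fg_{\0}$ again preserves injectives). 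Taking $\Gamma$ and cohomology of this complex computes $H^{i}(G_{\0}/P_{\0}, [\Lambda^{\bullet}(\fg_{\1}/\fp_{\1})\otimes N]^{*})$.

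Since $\Gamma$ agrees whether computed over $\fg$ or over $\fg_{\0}$, the two cohomology computations produce the same $\fg_{\0}$-modules, yielding the asserted natural isomorphism $H^{i}(G/P, N^{*}) \cong H^{i}(G_{\0}/P_{\0}, [\Lambda^{\bullet}(\fg_{\1}/\fp_{\1})\otimes N]^{*})$ for all $i\ge 0$. The step I expect to require the most care is verifying that the relevant functors ($\Coind_{\fp}^{\fg}$, $\Coind_{\fp_{\0}}^{\fg_{\0}}$, and $\Lambda^{\bullet}(\fg_{\1}/\fp_{\1})\otimes(-)$) all preserve injective objects in the appropriate module categories, so that the displayed $\fg_{\0}$-isomorphism of \cite{serganova6} can legitimately be promoted to an isomorphism of the derived functors; everything else is a formal consequence of the $\fg_{\0}$-equivariance of that isomorphism together with the observation that $\Gamma^{i}$ is intrinsic to the $\fg_{\0}$-structure.
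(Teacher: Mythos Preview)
Your proposal is correct and follows exactly the same approach as the paper: the paper's entire proof is the single sentence preceding the proposition, ``Applying $\Gamma^{i}$ to both sides yields the following result,'' and you have simply (and correctly) expanded on why this application is legitimate. Your elaboration about injective resolutions and the fact that $\Gamma$ depends only on the $\fg_{\0}$-structure is more detail than the paper provides, but it is the right justification.
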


\subsection{} These functors can be used to construct the simple modules for $\fg$ as follows.  Let $T$ be the subgroup of $G$ consisting of diagonal matrices and let $\ft=\operatorname{Lie}(T)$ be the corresponding Cartan subalgebra of $\fg_{\0}$ consisting of all diagonal matrices.   In particular, $\ft^{*}$ has a basis given by $\varepsilon_{1}, \dotsc , \varepsilon_{m+n}$ where $\varepsilon_{i} \in \ft^{*}$ is the functional which picks off the $i$th diagonal entry. Given $\lambda \in \ft^{*}$ we then may write $\lambda= \sum_{i=1}^{m+n}\lambda_{i}\varepsilon_{i}$.   Let $X^{+}_{0} \subseteq \ft^{*}$ be the set of dominant integral highest weights with respect to the Borel subalgebra $\fb_{\0}$ (of all upper triangular matrices) of $\fg_{\0}$.  For $\lambda \in X^{+}_{0},$ let $L_{0}(\lambda)$ be the simple finite dimensional $\fg_{\0}$-module of highest weight $\lambda$. 

Recall that $Q^{\pm}$ is our notation for the standard parabolic subgroups.  Set $\fq^{\pm}= \fg_{0} \oplus \fg_{\pm 1}= \operatorname{Lie}(Q^{\pm})$.  Since the Lie superalgebras $\fg_{\pm 1}$ are abelian ideals of $\fq^{\pm}$,  
$L_{0}(\lambda)$ can be viewed as a simple $\fq^{\pm}$-module via inflation.  For each $\lambda\in X^{+}_{0}$, the \emph{Kac modules} 
are constructed in the following way:
$$
K^{\pm}(\lambda)=U(\fg)\otimes_{U(\fq^{\pm})} L_{0}(\lambda). 
$$

From the PBW theorem for Lie superalgebras it follows that  
\begin{equation}\label{E:KacmoduleID}
H^{0}(G/Q^{\pm}, L_{0}(\lambda)^{*})^{*} \cong K^{\pm}(\lambda)
\end{equation} 
for any simple finite dimensional $\fq^{\pm}$-module $L_{0}(\lambda)$.

The module $K^{+}(\lambda)$ has a unique maximal submodule. The head of $K^{+}(\lambda)$ is the simple 
finite dimensional $\fg$-module $L(\lambda)$. Up to parity shift, the set $\{L(\lambda) \mid \lambda \in X_{0}^{+}\}$ 
constitutes a complete set of non-isomorphic simple modules in $\mathcal{F}_{(\g,\g_{\0})}$.  We disregard the parity shift as it will not play a role in what follows.

\subsection{}  Let $G=GL(m|n)$, let $G_{\pm 1}$ be the closed subgroup of $G$ with $\operatorname{Lie}(G_{\pm 1})=\fg_{\pm1}$, and let $P$ denote an arbitrary parabolic subgroup of $G$ with $P=L\ltimes U$, where $L$ is a Levi subgroup and $U$ is a unipotent subgroup. \emph{From this point on we always assume that $G_{1} \subseteq P$}. Let $Z$ be the closed subgroup $P\cap G_{-1}$ and $\mathfrak z$ be the Lie superalgebra associated to $Z$.

For a supergroup scheme $D$ with closed subgroup $C$ we have an induction functor from $C$-modules to $D$-modules, $\Ind_{C}^{D} -$, which can be defined as in the non-super setting (e.g.\ see \cite{zubkov}).  We use the notation and results of \cite{jantzen} extended to the supergroup setting.  This induction functor is 
left exact and admits higher right derived functors $R^{i}\Ind_{C}^{D} -$ for $i>0$.  When $D=G$ and $C=P$ these functors coincide with the functors $H^{i}(G/P, -)$ introduced in Section~\ref{SS:IntroGeometricInduction}.  Our first result relates the induction for the 
pairs $(G,P)$ and $(Q^{-},P_{0}\ltimes Z)$.  

\begin{prop} \label{P:equiv-vanishing}  Let $N$ be a module for $P$. Then for each $j>0$, $R^{j}\operatorname{ind}_{P}^{G} N=0$ if and only if 
$R^{j}\operatorname{ind}_{P_{0}\ltimes Z}^{Q^{-}}N=0$. 
\end{prop}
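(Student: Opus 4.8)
The plan is to reduce both derived inductions to sheaf cohomology on the ordinary flag variety $G_{\0}/P_{\0}$ by applying Proposition~\ref{P:KeyFact} twice — once to $(G,P)$ and once to $(Q^{-},P_{\0}\ltimes Z)$ — and to observe that the two reductions produce the \emph{same} coefficient module. First I would record the relevant structure. Since $\fg_{\1}=\fg_{-1}\oplus\fg_{1}$ and $\fg_{1}\subseteq\fp$, every odd element of $\fp$ decomposes into its degree $+1$ part (which lies in $\fp$) and its degree $-1$ part, so $\fp_{\1}=\fg_{1}\oplus\fz_{\1}$ where $\fz_{\1}=\fp\cap\fg_{-1}=\operatorname{Lie}(Z)$. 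Hence $\operatorname{Lie}(P_{\0}\ltimes Z)=\fp_{\0}\oplus\fz_{\1}=\fp\cap\fq^{-}$, so $P_{\0}\ltimes Z=P\cap Q^{-}$ is a parabolic subgroup of $Q^{-}$ with even part $P_{\0}$, underlying even homogeneous space $G_{\0}/P_{\0}$, and odd part $\fz_{\1}\subseteq\fg_{-1}=(\fq^{-})_{\1}$. The key elementary point is that the odd quotient governing induction is the same for the two pairs: since $G_{\0}$ preserves the $\Z$-grading and $\fp_{\1}=\fg_{1}\oplus\fz_{\1}$, the evident map induces an isomorphism of $P_{\0}$-modules $\fg_{\1}/\fp_{\1}\cong\fg_{-1}/\fz_{\1}$, and $\fg_{-1}/\fz_{\1}=(\fq^{-})_{\1}/\fz_{\1}$ is precisely the odd quotient attached to $(Q^{-},P_{\0}\ltimes Z)$.

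Next I would apply Proposition~\ref{P:KeyFact} to $(G,P)$: for a finite-dimensional $P$-module $N$,
\[
R^{j}\operatorname{ind}_{P}^{G}N^{*}=H^{j}(G/P,N^{*})\cong H^{j}\!\left(G_{\0}/P_{\0},\,[\Lambda^{\bullet}(\fg_{\1}/\fp_{\1})\otimes N]^{*}\right).
\]
Its proof combines the $\fg_{\0}$-module isomorphism of \cite[Section~4]{serganova6} with the identification of $\Gamma^{\bullet}$ of the resulting $\fg_{\0}$-module with $H^{\bullet}(G_{\0}/P_{\0},-)$; both steps rest only on the PBW theorem and on the direct-sum decomposition $\fg=\fu^{-}\oplus\fp$, and one checks that the complement $\fu^{-}$ may be taken inside $\fq^{-}$, so that $\fq^{-}=\fu^{-}\oplus(\fp\cap\fq^{-})$. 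Hence the identical argument applied to the supergroup $Q^{-}$ and its parabolic $P_{\0}\ltimes Z$ yields
\[
R^{j}\operatorname{ind}_{P_{\0}\ltimes Z}^{Q^{-}}N^{*}\cong H^{j}\!\left(G_{\0}/P_{\0},\,[\Lambda^{\bullet}(\fg_{-1}/\fz_{\1})\otimes N]^{*}\right).
\]
By the $P_{\0}$-module isomorphism of the previous paragraph the two coefficient modules coincide, so $R^{j}\operatorname{ind}_{P}^{G}N^{*}\cong R^{j}\operatorname{ind}_{P_{\0}\ltimes Z}^{Q^{-}}N^{*}$ as $\fg_{\0}$-modules for all $j\geq 0$; in particular, for $j>0$ one vanishes if and only if the other does. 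Applying this with $N$ replaced by $N^{*}$ and using $N^{**}\cong N$ proves the proposition for all finite-dimensional $N$.

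To remove the finite-dimensionality hypothesis I would note that the isomorphism $\operatorname{ind}_{P}^{G}(-)\cong\operatorname{ind}_{P_{\0}\ltimes Z}^{Q^{-}}\big((-)|_{P_{\0}\ltimes Z}\big)$, which is the $R^{0}$ case of the comparison above and is natural in the $P$-module, in fact holds for arbitrary $P$-modules. Moreover $U(\fp)$ is free of finite rank as a right $U(\fp\cap\fq^{-})$-module, with basis the exterior monomials in a basis of the odd space $\fg_{1}$ (equivalently, $P/(P_{\0}\ltimes Z)$ is the affine superspace on $\fg_{1}$), so its left adjoint to restriction, $U(\fp)\otimes_{U(\fp\cap\fq^{-})}(-)$, is exact; hence restriction from $P$-modules to $(P_{\0}\ltimes Z)$-modules carries injectives to injectives. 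Computing the right derived functors of both inductions from one and the same injective resolution of $N$ in $P$-modules then gives $R^{j}\operatorname{ind}_{P}^{G}N\cong R^{j}\operatorname{ind}_{P_{\0}\ltimes Z}^{Q^{-}}N$ for every $P$-module $N$ and every $j$, which is more than enough.

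The step I expect to require the most care is invoking Proposition~\ref{P:KeyFact} for $(Q^{-},P_{\0}\ltimes Z)$: that proposition was stated for parabolics of $\operatorname{GL}(m|n)$, and although $Q^{-}$ is itself such a parabolic, $P_{\0}\ltimes Z$ is a parabolic of the non-reductive supergroup $Q^{-}$, so one must verify that the reduction to the even flag variety $G_{\0}/P_{\0}$ in its proof goes through verbatim — which it does, since it uses only PBW and $\fq^{-}=\fu^{-}\oplus(\fp\cap\fq^{-})$. A minor secondary point is the injectivity-preservation argument needed to pass to infinite-dimensional $N$; everything else is formal.
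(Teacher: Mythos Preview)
Your argument is correct and actually yields a sharper conclusion than the paper's, namely a natural $\fg_{\0}$-module isomorphism $R^{j}\Ind_{P}^{G}N\cong R^{j}\Ind_{P_{0}\ltimes Z}^{Q^{-}}N$ for all $j\ge 0$, not merely simultaneous vanishing. The route, however, is quite different from the one the authors take. The paper never invokes Proposition~\ref{P:KeyFact}; instead it compares each of the two inductions in question to a third, $R^{j}\Ind_{P_{0}\ltimes Z}^{G}N$, via the two Grothendieck composition spectral sequences for the factorisations $P_{0}\ltimes Z\subset Q^{-}\subset G$ and $P_{0}\ltimes Z\subset P\subset G$. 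Both collapse because the inner inductions $\Ind_{Q^{-}}^{G}$ and $\Ind_{P_{0}\ltimes Z}^{P}$ are exact, and the tensor identity together with $\Ind_{P_{0}\ltimes Z}^{P}\C\cong K^{-}(0)|_{P}$ gives $R^{j}\Ind_{P_{0}\ltimes Z}^{G}N\cong K^{-}(0)\otimes R^{j}\Ind_{P}^{G}N$ on one side and $\Ind_{Q^{-}}^{G}\big(R^{j}\Ind_{P_{0}\ltimes Z}^{Q^{-}}N\big)$ on the other, whence the equivalence of vanishing.

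Your approach is more elementary and more informative: once one notes that $\fg_{\1}/\fp_{\1}\cong\fg_{-1}/\fz$ as $P_{\0}$-modules, both derived inductions are identified with the \emph{same} cohomology on the even flag variety $G_{\0}/P_{\0}$. The price is that you must redo the proof of Proposition~\ref{P:KeyFact} in the non-reductive setting $(Q^{-},P_{0}\ltimes Z)$; as you observe, this only uses PBW and the decomposition $\fq^{-}=\fu^{-}\oplus(\fp\cap\fq^{-})$, so it goes through. The paper's spectral sequence argument avoids this verification and stays entirely within standard formal machinery, but it only gives vanishing equivalence rather than an isomorphism, and it requires the tensor identity step. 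Your extension to infinite-dimensional $N$ via the exactness of $U(\fp)\otimes_{U(\fp\cap\fq^{-})}(-)$ is clean; just be aware that, strictly speaking, the paper is working with rational representations of supergroup schemes rather than arbitrary $U(\fp)$-modules, so the injectivity-preservation claim should be phrased in that category (where the analogous freeness statement still holds).
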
 

\begin{proof} We first employ the spectral sequence: 
$$E_{2}^{i,j}=R^{i}\Ind_{Q^{-}}^{G} R^{j}\Ind_{P_{0}\ltimes Z}^{Q^{-}}N \Rightarrow R^{i+j}\Ind_{P_{0}\ltimes Z}^{G} N.$$
The functor $\Ind_{Q^{-}}^{G}(-)$ coincides with the coinduction functor $\Hom_{U(\fq^{-})}(U(\fg ), -)$.  From the PBW theorem $U(\fg )$ is projective as a $U(\fg^{-})$-module.  This implies the functor is exact and the spectral sequence collapses.  This yields the isomorphism: 
$$\Ind_{Q^{-}}^{G}[R^{j}\Ind_{P_{0}\ltimes Z}^{Q^{-}} N]\cong R^{j}\Ind_{P_{0}\ltimes Z}^{G} N$$ 
for $j>0$.  Since $\Ind_{Q^{-}}^{G}(-) \cong \Hom_{U(\fq^{-})}(U(\fg),-)$ we have that $\Ind_{Q^{-}}^{G}(-)$ takes 
non-zero modules to non-zero modules. We can now conclude that for $j>0$, 
$R^{j}\Ind_{P_{0}\ltimes Z}^{Q^{-}}N=0$ if and only if $R^{j}\Ind_{P_{0}\ltimes Z}^{G}N=0$. 
Next we involve the spectral sequence: 
$$E_{2}^{i,j}=R^{i}\Ind_{P}^{G}\ R^{j}\Ind_{P_{0}\ltimes Z}^{P}N \Rightarrow R^{i+j}\Ind_{P_{0}\ltimes Z}^{G} N.$$ 
Since $\Ind_{P_{0}\ltimes Z}^{P} (-)$ is exact, it follows that for $j\geq 0$,
$$R^{j}\Ind_{P_{0}\ltimes Z}^{G} N\cong R^{j}\Ind_{P}^{G} [\Ind_{P_{0}\ltimes Z}^{P} N].$$ 
Since $G_{1} \subseteq P$ we see, by applying (\ref{E:KacmoduleID}), that $K^{-}(0)^{*}$ is isomorphic to $\Ind_{P_{0}\ltimes Z}^{P} \C$ when restricted to $P$. 
Using this and the tensor identity twice yields
$$ R^{j}\Ind_{P}^{G} [\Ind_{P_{0}\ltimes Z}^{P} N] \cong R^{j}\Ind_{P}^{G} [(\Ind_{P_{0}\ltimes Z}^{P} \C)\otimes  N]\cong  K^{-}(0)^{*}\otimes R^{j}\Ind_{P}^{G} N.  
$$ 
These isomorphisms show for $j>0$ that $R^{j}\operatorname{ind}_{P}^{G} N=0$ if and only if $R^{j}\operatorname{ind}_{P_{0}\ltimes Z}^{Q^{-}}N=0$. 
\end{proof} 

\begin{prop} \label{P:cohomology-identity} Let $R=\operatorname{H}^{\bullet}(\fg_{-1},\C)$, $N$ be a module for $P$, and $M=\operatorname{ind}_{P}^{G}N$. If $R^{j}\operatorname{ind}_{P}^{G} N=0$ for all $j>0$ then 
$$\operatorname{Ext}^{\bullet}_{\fg_{-1}}(M,M)\cong \operatorname{Ext}^{\bullet}_{\fg_{-1}}(M,\operatorname{ind}_{P_{0}\ltimes Z}^{Q^{-}}N)$$ 
as $R$-modules. 
\end{prop}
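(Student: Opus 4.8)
The plan is to compute $\operatorname{Ext}^{\bullet}_{\fg_{-1}}(M,M)$ by exploiting the hypothesis that the higher derived functors $R^j\Ind_P^G N$ vanish, which means $M = \Ind_P^G N$ behaves homologically like an ``honest'' (exact) induction. First I would observe that under this vanishing hypothesis, the spectral sequence arguments in the proof of \cref{P:equiv-vanishing} simplify: from the two spectral sequences used there (with trivial coefficients replaced by $N$), the vanishing of $R^j\Ind_P^G N$ for $j>0$ forces $R^j\Ind_{P_0\ltimes Z}^{Q^-}N = 0$ for $j>0$ as well, and moreover the isomorphism $M \cong \Ind_{Q^-}^G[\,\Ind_{P_0\ltimes Z}^{Q^-}N\,]$ holds (using that $\Ind_{Q^-}^G(-) \cong \Hom_{U(\fq^-)}(U(\fg),-)$ is exact).

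The core of the argument is then a change-of-rings / adjunction computation. Since $M = \Ind_P^G N$ and $M = \Ind_{Q^-}^G[\,\Ind_{P_0\ltimes Z}^{Q^-}N\,]$, and since $\fg_{-1} \subseteq \fq^-$ and $\fg_{-1} \cap \fp$-related subgroups control the relevant restrictions, I would use the tensor identity (as in the proof of \cref{P:equiv-vanishing}, where $K^-(0) \cong \Ind_{P_0\ltimes Z}^P\C$) together with Frobenius reciprocity for the pair $(Q^-, P_0\ltimes Z)$ to rewrite $\operatorname{Ext}^{\bullet}_{\fg_{-1}}(M, M)$. Concretely, writing $M' = \Ind_{P_0\ltimes Z}^{Q^-}N$, the first variable $M = \Ind_{Q^-}^G M'$ can be ``pushed down'' to $Q^-$ when computing $\fg_{-1}$-cohomology because $\fg = \fu^-_{Q^-} \oplus \fq^-$ with $\fg_{-1} \subseteq \fq^-$, so restriction from $G$ to $Q^-$ and the exactness of $\Ind_{Q^-}^G$ give $\operatorname{Ext}^{\bullet}_{\fg_{-1}}(\Ind_{Q^-}^G M', \,-) \cong \operatorname{Ext}^{\bullet}_{\fg_{-1}}(M', \operatorname{Res}\,-)$ at the level of the relevant restricted modules. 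Feeding $M$ itself into the second slot and using that $M|_{Q^-}$ and $M'$ differ only by the exact induction $\Ind_{P_0\ltimes Z}^{Q^-}$ — so the $\fg_{-1}$-restrictions agree up to the projective/free module $K^-(0)$-factor which the tensor identity handles — yields the desired identification $\operatorname{Ext}^{\bullet}_{\fg_{-1}}(M,M) \cong \operatorname{Ext}^{\bullet}_{\fg_{-1}}(M, M')$ as $R$-modules, where the $R = \operatorname{H}^{\bullet}(\fg_{-1},\C)$-module structure is the natural one coming from Yoneda product / cup product with cohomology classes and is visibly preserved by all the above natural isomorphisms.

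The main obstacle I expect is bookkeeping the $\fg_{-1}$-module structures carefully through the inductions and making the adjunction precise: one must verify that restriction to $\fg_{-1}$ commutes appropriately with $\Ind_{Q^-}^G$ and with $\Ind_{P_0\ltimes Z}^{Q^-}$ in the way required, and that the tensor identity isomorphism $M|_P \cong K^-(0) \otimes (\text{something})$ restricts compatibly to $\fg_{-1} = \fu^-$. In particular, since $K^-(0) = \Ind_{P_0\ltimes Z}^P\C$ is free/projective over $U(\fg_{-1})$ (being built from the exterior algebra $\Lambda^\bullet(\fg_{-1})$ via \cref{P:KeyFact}-type reasoning), tensoring with it does not change the $\fg_{-1}$-cohomology up to the natural grading shift, but one should confirm this does not disturb the $R$-module structure. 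The routine parts — naturality of the spectral sequences, compatibility of cup products — I would state but not belabor; the substance is the single change-of-rings identification combined with the tensor identity, exactly paralleling and slightly extending the method of \cref{P:equiv-vanishing}.
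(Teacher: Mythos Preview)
Your outline has real gaps that would prevent the argument from going through.

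First, the identification $M \cong \Ind_{Q^{-}}^{G}M'$ with $M' = \Ind_{P_{0}\ltimes Z}^{Q^{-}}N$ is not correct. Tracing the spectral sequences in \cref{P:equiv-vanishing} for $j=0$ gives $\Ind_{Q^{-}}^{G}M' \cong \Ind_{P_{0}\ltimes Z}^{G}N \cong K^{-}(0)\otimes M$, not $M$ itself. Second, the ``adjunction'' you invoke, $\operatorname{Ext}^{\bullet}_{\fg_{-1}}(\Ind_{Q^{-}}^{G}M',\,-) \cong \operatorname{Ext}^{\bullet}_{\fg_{-1}}(M',\operatorname{Res}\,-)$, is not a consequence of exactness of $\Ind_{Q^{-}}^{G}$; Frobenius reciprocity for induction gives an adjunction in the second variable, and in any case produces isomorphisms of $G$- or $Q^{-}$-Ext groups, not $\fg_{-1}$-Ext groups. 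Third, $K^{-}(0)$ is \emph{not} projective over $U(\fg_{-1})$: since $\fg_{-1}\subseteq \fq^{-}$, the generator $1\otimes 1$ of $K^{-}(0)=U(\fg)\otimes_{U(\fq^{-})}\C$ is annihilated by $\fg_{-1}$, so tensoring with $K^{-}(0)$ genuinely changes $\fg_{-1}$-cohomology.

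What the paper actually does is quite different and hinges on a trick you do not have. One rewrites $\operatorname{Ext}^{\bullet}_{\fg_{-1}}(M,M)$ as $\Hom_{\fg_{0}}\bigl(\bigoplus_{\lambda}L_{0}(\lambda)^{m_{\lambda}},\operatorname{Ext}^{\bullet}_{\fg_{-1}}(M,M)\bigr)$ and then, via the argument of \cite[Theorem~3.3.1]{BKN4}, identifies this with the \emph{relative} Ext $\operatorname{Ext}^{\bullet}_{(\fq^{-},\fg_{0})}\bigl(\bigoplus_{\lambda}L_{0}(\lambda)^{m_{\lambda}}\otimes M,\,M\bigr)$. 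Frobenius reciprocity then replaces $L_{0}(\lambda)$ by the Kac module $K^{-}(\lambda)$ and promotes this to $\operatorname{Ext}^{\bullet}_{(\fg,\fg_{0})}$. Now the vanishing hypothesis collapses the Grothendieck spectral sequence for $\Ind_{P}^{G}$ and lets one descend to $\operatorname{Ext}^{\bullet}_{(\fp,\fp_{0})}(\ldots,N)$. Using that $K^{-}(\lambda)\cong U(\fp)\otimes_{U(\fp_{0}\oplus\fz)}L_{0}(\lambda)$ as $\fp$-modules, another Frobenius reciprocity drops to $(\fp_{0}\oplus\fz,\fp_{0})$, and then the same spectral-sequence collapse (via \cref{P:equiv-vanishing}) climbs back to $(\fq^{-},\fg_{0})$ with $N$ replaced by $\Ind_{P_{0}\ltimes Z}^{Q^{-}}N$. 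Reversing the first step yields the result. The essential point is that all manipulations happen at the level of relative Ext for pairs containing $\fg_{0}$; the conversion to and from $\fg_{-1}$-Ext is done only at the endpoints via the sum-over-simples device.
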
 

\begin{proof} First note that over ${\mathbb C}$ the category of rational $G$-modules (resp.\ $P$-modules) is equivalent to the relative categories 
${\mathcal C}_{({\mathfrak g},{\mathfrak g}_{0})}$ (resp.\ ${\mathcal C}_{({\mathfrak p},{\mathfrak p}_{0})}$). Since $G_{0}$ is reductive all rational 
$G_{0}$-modules are completely reducible. 

Set $M=\operatorname{ind}_{P}^{G}N$. For each $\lambda\in X_{0}^{+}$ let $m_{\lambda}=\dim L_{0}(\lambda)$. 
Then as $R$-modules 
\begin{eqnarray*} 
\operatorname{Ext}^{\bullet}_{\fg_{-1}}(M,M) &\cong & \operatorname{Hom}_{\C}(\C,\text{Ext}^{\bullet}_{\fg_{-1}}(M,M)) \\
&\cong & \operatorname{Hom}_{G_{0}}(\C,[\text{ind}_{{\C}}^{G_{0}}{\C}]\otimes \text{Ext}^{\bullet}_{\fg_{-1}}(M,M))\\
&\cong & \operatorname{Hom}_{G_{0}}(\C,{\mathbb C}[G_{0}]\otimes \text{Ext}^{\bullet}_{\fg_{-1}}(M,M)) \\
& \cong & \operatorname{Hom}_{\fg_{0}}(\oplus_{\lambda\in X_{0}^{+}}L_{0}(\lambda)^{m_{\lambda}}, 
\operatorname{Ext}^{\bullet}_{\fg_{-1}}(M,M)) \\
&\cong & \operatorname{Ext}^{\bullet}_{(\fq^{-},\fg_{0})}(\oplus_{\lambda\in X_{0}^{+}}L_{0}(\lambda)^{m_{\lambda}}\otimes M,M)\\
&\cong &  \operatorname{Ext}^{\bullet}_{(\fg,\fg_{0})}(\oplus_{\lambda\in X_{0}^{+}}K^{-}(\lambda)^{m_{\lambda}}\otimes M,M),
\end{eqnarray*} 
by the proof of \cite[Theorem 3.3.1]{BKN4} and Frobenius reciprocity. Note that  the coordinate algebra 
${\mathbb C}[G_{0}]$ is a $G_{0}\times G_{0}$-module isomorphic to $\oplus_{\lambda\in X_{0}^{+}}L_{0}(\lambda)\otimes L_{0}(\lambda)^{*}$. 
The $G_{0}$-module $\operatorname{Ext}^{\bullet}_{\fg_{-1}}(M,M)$ is completely reducible and the bimodule decomposition of 
 ${\mathbb C}[G_{0}]$ can be used to give a decomposition of $\operatorname{Ext}^{\bullet}_{\fg_{-1}}(M,M)$. 

We have a spectral sequence given by the composition of the Hom and induction functor (cf.\ \cite[I (4.5)]{jantzen}): 
$$E_{2}^{i,j}= \operatorname{Ext}^{i}_{(\fg,\fg_{0})}(\oplus_{\lambda\in X_{0}^{+}}K^{-}(\lambda)^{m_{\lambda}}\otimes M,R^{j}\Ind_{P}^{G}N)
\Rightarrow  \operatorname{Ext}^{i+j}_{(\fp,\fp_{0})}(\oplus_{\lambda\in X_{0}^{+}}K^{-}(\lambda)^{m_{\lambda}}\otimes M,N).
$$ 
Since $R^{j}\operatorname{ind}_{P}^{G} N=0$ for $j>0$ this spectral sequence collapses and yields the isomorphism: 
$$\operatorname{Ext}^{\bullet}_{(\fg,\fg_{0})}(\oplus_{\lambda\in X_{0}^{+}}K^{-}(\lambda)^{m_{\lambda}}\otimes M,M)\cong 
\operatorname{Ext}^{\bullet}_{(\fp,\fp_{0})}(\oplus_{\lambda\in X_{0}^{+}}K^{-}(\lambda)^{m_{\lambda}}\otimes M,N).$$ 
Since $G_{1}\subseteq P$ we have that $K^{-}(\lambda)\cong U(\fp)\otimes_{U(\fp_{0}\oplus \fz)}L_{0}(\lambda)$ as $\fp$-modules and  
\begin{eqnarray*} 
\operatorname{Ext}^{\bullet}_{\fg_{-1}}(M,M) &\cong& \operatorname{Ext}^{\bullet}_{(\fp,\fp_{0})}(\oplus_{\lambda\in X_{0}^{+}}K^{-}(\lambda)^{m_{\lambda}}\otimes M,N)\\
&\cong& \operatorname{Ext}^{\bullet}_{(\fp_{0}\oplus \fz, \fp_{0})}(\oplus_{\lambda\in X_{0}^{+}} L_{0}(\lambda)^{m_{\lambda}}\otimes M,N)\\
&\cong& \operatorname{Ext}^{\bullet}_{({\mathfrak q}^{-},\fg_{0})}(\oplus_{\lambda\in X_{0}^{+}} L_{0}(\lambda)^{m_{\lambda}}\otimes M,\Ind_{P_{0}\ltimes Z}^{Q^{-}}N)\\
&\cong& \operatorname{Ext}^{\bullet}_{\fg_{-1}}(M,\Ind_{P_{0}\ltimes Z}^{Q^{-}}N).
\end{eqnarray*} 
The second-last line holds by a spectral sequence argument similar to the one above after replacing $(G,P)$ with $(Q^{-},P_{0}\ltimes Z)$ and using Proposition~\ref{P:equiv-vanishing}.  The last line uses the proof of 
\cite[Theorem 3.3.1]{BKN4}.  
\end{proof} 

\subsection{} Consider the  pairs $(Q^{-},G_{-1})$ and $(P_{0}\ltimes Z,Z)$ with quotient groups 
$Q^{-}/G_{-1}\cong G_{0}$ and $(P_{0}\ltimes Z)/Z\cong P_{0}$. Applying \cite[I (6.12)]{jantzen} when $M$ is a $Q^{-}$-module and $N$ is a $P_{0}\ltimes Z$-module 
we have two spectral sequences which converge to the same abutment: 
\begin{equation}
\widehat E_{2}^{i,j}=\operatorname{Ext}^i_{\fg_{-1}}(M,R^{j} \Ind_{P_{0}\ltimes Z}^{Q^{-}} N)\Rightarrow (R^{i+j}{\mathcal G}_{1})(M,N),
\end{equation}
\begin{equation}
E_{2}^{i,j}=R^i\Ind^{G_{0}}_{P_{0}}\Ext^j_{\fz}(M,N)\Rightarrow
(R^{i+j}{\mathcal G}_{2})(M,N).
\end{equation} 
Here ${\mathcal G}_{1}=\text{Hom}_{{\mathfrak g}_{-1}}(M,\text{ind}_{P_{0}\ltimes Z}^{Q^{-}}(-))$ and 
${\mathcal G}_{2}=\text{ind}_{P_{0}}^{G_{0}}(\text{Hom}_{{\mathfrak z}}(M,-))$. These functors 
are used to construct the aforementioned spectral sequences with 
${\mathcal G}_{1}$ isomorphic to ${\mathcal G}_{2}$ (cf.\ \cite[I (6.12)]{jantzen}). 

In the case when $R^{j} \Ind_{P_{0}\ltimes Z}^{Q^{-}} N=0$ for all $j>0$, the first spectral sequence collapses and one can identify the abutment in the 
second spectral sequence. This yields: 
$$E_{2}^{i,j}=R^i\Ind^{G_{0}}_{P_{0}}\Ext^j_{\fz}(M,N)\Rightarrow \operatorname{Ext}^{i+j}_{\fg_{-1}}(M,\Ind_{P_{0}\ltimes Z}^{Q^{-}} N).$$
Now one can apply Proposition~\ref{P:cohomology-identity} when $M=\Ind_{P}^{G} N$ to rewrite the aforementioned spectral sequence as 
\begin{equation}\label{E:inversionspectral} 
E_{2}^{i,j}=R^i\Ind^{G_{0}}_{P_{0}}\Ext^j_{\fz}(M,N)\Rightarrow \operatorname{Ext}^{i+j}_{\fg_{-1}}(M,M).
\end{equation}

\subsection{A Version of the Borel-Weil-Bott Theorem} \label{SS:BorelWeilBott} We now specialize to the case when the parabolic subgroup is block upper triangular. \emph{Namely, for the remainder of the paper we will always have in mind a  fixed integer $0 \leq k \leq n$ and the fixed parabolic subgroup $P=P_{k}$ which is block upper triangular and has Levi subgroup isomorphic to $GL(m|n-k) \times GL(k)$.}  Note that the assumption from the previous section that $G_{1} \subseteq P$ is satisfied by this choice of parabolic subgroup.

 Let $\fp=\operatorname{Lie}(P)$ be the parabolic subalgebra of $\fg=\gl (m|n)$ corresponding to $P$ and let $\fl$ be the corresponding Levi subalgebra.

\begin{lemma}\label{L:BreakitDown}  Let $D > m(n-k)$ be fixed.  Let $L_{\fp}(\lambda)$ be a simple $\fp$-module with highest weight $\lambda$ satisfying $\lambda_{i}-\lambda_{i+1} > D$ for $i=1, \dotsc , m-1, m+1, \dotsc , m+n-1$.  Then as an $\fl_{\0}$-module 
\[
L_{\fp}(\lambda) \cong \bigoplus_{\gamma \in \Gamma_{\lambda}} L_{\fl_{\0}}(\gamma)
\] with $\Gamma_{\lambda} \subseteq \ft^{*}$ a finite set such that for all $\gamma \in \Gamma_{\lambda}$, $\gamma_{i}-\gamma_{i+1} > D - m(n-k)$ for $i=1, \dotsc , m-1, m+1, \dotsc , m+n-1$.
\end{lemma}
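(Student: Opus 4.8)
The plan is to strip off the nilradical, realize $L_{\fp}(\lambda)$ as a quotient of a Kac module for the (Type I) Levi $\fl$, and read the $\fl_{\0}$-constituents off the weights of an exterior algebra. First, since $\fp = \fl \oplus \fu^{+}$ with $\fu^{+}$ the abelian nilpotent radical, the ideal $\fu^{+}$ annihilates a nonzero subspace of the finite dimensional simple module $L_{\fp}(\lambda)$, which is then an $\fl$-stable $\fp$-submodule; hence $\fu^{+}$ acts by zero and $L_{\fp}(\lambda)$ is the inflation of the simple $\fl$-module $L_{\fl}(\lambda)$. In particular $L_{\fp}(\lambda) \cong L_{\fl}(\lambda)$ as $\fl_{\0}$-modules. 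Now $\fl \cong \gl(m|n-k) \oplus \gl(k)$ inherits a $\Z$-grading $\fl = \fl_{-1} \oplus \fl_{\0} \oplus \fl_{1}$ from that of $\gl(m|n-k)$, with $\fl_{\0} \cong \gl(m)\oplus\gl(n-k)\oplus\gl(k)$ and with $\fl_{-1}$ the $m(n-k)$-dimensional odd space spanned by the root vectors of weight $\epsilon_{m+j}-\epsilon_{i}$ for $1\leq i\leq m$, $1\leq j\leq n-k$; note the last $k$ coordinates are untouched by these roots. Since $\fl$ is Type I, by the PBW theorem $L_{\fl}(\lambda)$ is a quotient, as $\fl_{\0}$-modules, of the Kac module $U(\fl)\otimes_{U(\fl_{\0}\oplus\fl_{1})}L_{\fl_{\0}}(\lambda) \cong \Lambda^{\bullet}(\fl_{-1})\otimes L_{\fl_{\0}}(\lambda)$.

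Next I would analyze the weights of $\Lambda^{\bullet}(\fl_{-1})$: any such weight $\delta$ is a sum of distinct roots $\epsilon_{m+j}-\epsilon_{i}$, so $-(n-k)\leq\delta_{i}\leq 0$ for $1\leq i\leq m$, $0\leq\delta_{i}\leq m$ for $m+1\leq i\leq m+n-k$, and $\delta_{i}=0$ for $m+n-k+1\leq i\leq m+n$; in particular $|\delta_{i}-\delta_{i+1}|\leq m(n-k)$ for every $i$. Because the hypothesis makes $\lambda$ strictly dominant for $\fl_{\0}$ with every relevant gap exceeding $D>m(n-k)$, the tensor product $\Lambda^{\bullet}(\fl_{-1})\otimes L_{\fl_{\0}}(\lambda)$ lies in the stable range: it decomposes as $\bigoplus_{\delta}L_{\fl_{\0}}(\lambda+\delta)^{\oplus\dim\Lambda^{\bullet}(\fl_{-1})_{\delta}}$ with each $\lambda+\delta$ dominant. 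Consequently every $\fl_{\0}$-constituent of the quotient $L_{\fp}(\lambda)=L_{\fl}(\lambda)$ has highest weight $\lambda+\delta$ for some $\delta\in\wt(\Lambda^{\bullet}(\fl_{-1}))$, and we take $\Gamma_{\lambda}$ to be the (finite) set of such $\lambda+\delta$.

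Finally I would check the gap estimate for $\gamma=\lambda+\delta$. For $i\in\{1,\dotsc,m-1\}$ and $i\in\{m+1,\dotsc,m+n-k-1\}$, $\gamma_{i}-\gamma_{i+1}=(\lambda_{i}-\lambda_{i+1})+(\delta_{i}-\delta_{i+1})>D-m(n-k)$; for $i=m+n-k$ we have $\delta_{i}\geq 0$ and $\delta_{i+1}=0$, so $\gamma_{i}-\gamma_{i+1}\geq\lambda_{i}-\lambda_{i+1}>D$; and for $i\in\{m+n-k+1,\dotsc,m+n-1\}$, $\delta_{i}=\delta_{i+1}=0$, so $\gamma_{i}-\gamma_{i+1}=\lambda_{i}-\lambda_{i+1}>D$. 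In every case $\gamma_{i}-\gamma_{i+1}>D-m(n-k)$, which is the claim. The main obstacle is the stable-range decomposition: one must make precise how dominant $\lambda$ must be for $\Lambda^{\bullet}(\fl_{-1})\otimes L_{\fl_{\0}}(\lambda)$ to split with no Weyl-group twist (equivalently, no cancellation in the Brauer/Klimyk character formula) and verify that $\lambda_{i}-\lambda_{i+1}>D>m(n-k)$ comfortably suffices; the remaining work is just tracking which coordinates of $\gl(m|n)$ lie in which block of $\fl_{\0}$.
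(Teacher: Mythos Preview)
Your argument is correct and follows the same architecture as the paper's: identify $L_{\fp}(\lambda)$ with the inflation of $L_{\fl}(\lambda)$, pass to the Kac module for the Type~I Levi $\fl\cong\gl(m|n-k)\oplus\gl(k)$, and use the PBW isomorphism with $\Lambda^{\bullet}(\fl_{-1})\otimes L_{\fl_{\0}}(\lambda)$ to read off the $\fl_{\0}$-constituents.

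The only difference is in how the tensor product is decomposed. You bound the weights $\delta$ of $\Lambda^{\bullet}(\fl_{-1})$ directly and invoke the stable-range (Brauer--Klimyk) formula, correctly observing that since every $\lambda+\delta$ is strictly dominant for $\fl_{\0}$ there are no Weyl twists or cancellations. The paper instead embeds $\Lambda^{\bullet}(\fl_{-1})$ into $\bigoplus_{t\le m(n-k)}(\fl_{-1})^{\otimes t}$, notes $\fl_{-1}\cong V_{m}^{*}\boxtimes V_{n-k}$, and applies Pieri's rule one factor at a time; each step changes a single coordinate by $1$, so after at most $m(n-k)$ steps the gaps drop by at most $m(n-k)$. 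Your route is slightly more direct and gives the same bound; the paper's route sidesteps any appeal to a general tensor-product formula. In either case the ``obstacle'' you flag is not one: the strict dominance of $\lambda+\delta$ you already verified is exactly the hypothesis needed.
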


\begin{proof}   As $L_{\fp}(\lambda) =L_{\fl}(\lambda) = L_{\mathfrak{gl}(m|n-k)}(\lambda_{1}, \dotsc , \lambda_{m+n-k}) \boxtimes L_{\mathfrak{gl}(k)}(\lambda_{m+n-k+1}, \dotsc , \lambda_{m+n})$ and a simple $\gl(m|n-k)$-module is a quotient of the Kac module of the same highest weight, it suffices to prove that as $\fl_{\0}$-modules,
\begin{equation}\label{E:Kacmodulebreakdown}
 K^{+}_{\mathfrak{gl}(m|n-k)}(\lambda_{1}, \dotsc , \lambda_{m+n-k}) \boxtimes L_{\mathfrak{gl}(k)}(\lambda_{m+n-k+1}, \dotsc , \lambda_{m+n}) \cong  \bigoplus_{\gamma \in \widetilde{\Gamma}_{\lambda}} L_{\fl_{\0}}(\gamma)
\end{equation}
with $\widetilde{\Gamma}_{\lambda}$ satisfying the conditions stated in the theorem. This implies the desired result since $\Gamma_{\lambda} \subseteq \widetilde{\Gamma}_{\lambda}$.

From the PBW theorem for Lie superalgebras 
\[
 K^{+}_{\mathfrak{gl}(m|n-k)}(\lambda_{1}, \dotsc , \lambda_{m+n-k}) \cong  \Lambda^{\bullet}(\fl_{-1}) \otimes L_{\mathfrak{gl}(m|n-k)_{\0}}(\lambda_{1}, \dotsc , \lambda_{m+n-k}),
\] where the isomorphism is as $\mathfrak{gl}(m|n-k)_{\0} \cong \mathfrak{gl}(m) \oplus \mathfrak{gl}(n-k)$-modules.  From this we see that as a $\mathfrak{gl}(m) \oplus \mathfrak{gl}(n-k)$-module the Kac module $ K^{+}_{\mathfrak{gl}(m|n-k)}(\lambda_{1}, \dotsc , \lambda_{m+n-k})$ appears as a direct summand of  
\begin{equation*}
 \bigoplus_{t=0}^{\dim(\fl_{-1})=m(n-k)} \left(\fl_{-1} \right)^{\otimes t} \otimes L_{\mathfrak{gl}(m)\oplus \mathfrak{gl}(n-k)}(\lambda_{1}, \dotsc , \lambda_{m+n-k})  \cong \bigoplus_{\sigma} L_{\mathfrak{gl}(m|n-k)_{\0}} (\sigma).
\end{equation*} 
To describe the $\sigma$'s which appear in the direct sum, we observe that as a $\gl(m)\oplus \gl(n-k)$-module we have $\fl_{-1} \cong  V_{m}^{*} \boxtimes V_{n-k}$, where $V_{m}$ (resp.\ $V_{n-k}$) denotes the natural module for $\gl(m)$ (resp.\ $\gl(n-k)$).   A calculation using Pieri's formula \cite[(5.16)]{Macdonald} shows that the $\sigma$'s which appear in the above sum satisfy  $\sigma_{i}-\sigma_{i+1} > D - m(n-k)$ for $i=1, \dotsc , m-1, m+1, \dots, m+n-k-1$.

Inserting this description of the Kac module into the left hand side of \eqref{E:Kacmodulebreakdown} yields the right hand side and, hence, the desired result.
\end{proof}

\begin{lemma}\label{L:TensorDecomp}  Fix $D \geq km$ and let $L_{\fl_{\0}}(\gamma)$ be a simple $\fl_{\0}$-module with $\gamma_{i}-\gamma_{i+1} > D$ for $i=1, \dotsc , m-1,m+1, \dotsc, m+n-1$.  Then as $\fl_{\0}$-modules we have the following isomorphism:
\[
\Lambda^{\bullet}\left(\fu^{-}_{\1} \right) \otimes L_{\fl_{\0}}(\gamma) \cong \bigoplus_{\sigma \in \Sigma_{\gamma}} L_{\fl_{\0}}(\sigma)
\] for a finite set $\Sigma_{\gamma} \subset \ft^{*}$ such that for every $\sigma \in \Sigma_{\gamma}$ we have $\sigma_{i}-\sigma_{i+1} > D - km$ for $i=1, \dotsc , m-1,m+1, \dotsc, m+n-1$. 
\end{lemma}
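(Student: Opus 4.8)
The plan is to follow the proof of \cref{L:BreakitDown} and reduce the statement to an iterated application of Pieri's rule on the reductive Levi factor $\fl_{\0} \cong \gl(m) \oplus \gl(n-k) \oplus \gl(k)$, whose three summands act on the coordinate blocks $1, \dotsc, m$, then $m+1, \dotsc, m+n-k$, then $m+n-k+1, \dotsc, m+n$.

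First I would identify $\fu^{-}_{\1}$ as an $\fl_{\0}$-module. It is the block of odd matrix entries with rows in the $\gl(k)$-slot and columns in the $\gl(m)$-slot, and a direct computation of the adjoint action gives $\fu^{-}_{\1} \cong V_{m}^{*} \boxtimes \unit \boxtimes V_{k}$, where $V_{m}$ (resp.\ $V_{k}$) is the natural module for $\gl(m)$ (resp.\ $\gl(k)$); in particular $\dim \fu^{-}_{\1} = km$. Since we work over $\C$, the module $\Lambda^{\bullet}(\fu^{-}_{\1}) = \bigoplus_{t=0}^{km} \Lambda^{t}(\fu^{-}_{\1})$ is an $\fl_{\0}$-direct summand of $\bigoplus_{t=0}^{km}(\fu^{-}_{\1})^{\otimes t}$, so every simple $\fl_{\0}$-summand of $\Lambda^{\bullet}(\fu^{-}_{\1}) \otimes L_{\fl_{\0}}(\gamma)$ already occurs in $(\fu^{-}_{\1})^{\otimes t} \otimes L_{\fl_{\0}}(\gamma)$ for some $0 \leq t \leq km$. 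It therefore suffices to bound the gaps of the highest weights occurring there; finiteness of $\Sigma_{\gamma}$ is automatic, the tensor product being finite dimensional.

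Next I would apply Pieri's formula \cite[(5.16)]{Macdonald}. Writing $\gamma = (\gamma^{(1)} \mid \gamma^{(2)}, \gamma^{(3)})$ according to the three slots, tensoring by $(\fu^{-}_{\1})^{\otimes t}$ tensors the $\gl(m)$-factor by $(V_{m}^{*})^{\otimes t}$, leaves the $\gl(n-k)$-factor fixed, and tensors the $\gl(k)$-factor by $V_{k}^{\otimes t}$. Iterating Pieri, every simple constituent has highest weight
\[
\sigma = (\gamma_{1} - t_{1}, \dotsc, \gamma_{m} - t_{m} \mid \gamma_{m+1}, \dotsc, \gamma_{m+n-k}, \gamma_{m+n-k+1} + s_{1}, \dotsc, \gamma_{m+n} + s_{k}),
\]
where $t_{a} \geq 0$, $s_{b} \geq 0$ and $\sum_{a} t_{a} = \sum_{b} s_{b} = t \leq km$; in particular $0 \leq t_{a}, s_{b} \leq km$.

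Finally I would verify $\sigma_{i} - \sigma_{i+1} > D - km$ for $i$ in the stated range by a short case check: for $1 \leq i \leq m-1$ the gap changes by $t_{i+1} - t_{i} \geq -t_{i} \geq -km$; for $m+1 \leq i \leq m+n-k-1$ it is unchanged; for $i = m+n-k$ the gap decreases by $s_{1} \leq km$ (only the $(i+1)$st coordinate moves); and for $m+n-k+1 \leq i \leq m+n-1$ the gap changes by $s_{i'} - s_{i'+1} \geq -km$ with $i' = i - m - n + k$. In each case the gap remains $> D - km$. I do not anticipate a real obstacle here: the bound is crude, and the only point to watch is the indexing — in particular that the gap at $i = m$ (between the $\gl(m)$- and $\gl(n-k)$-blocks) is \emph{excluded} from the claim, which is exactly why one never compares a coordinate that has been decreased with one that is held fixed.
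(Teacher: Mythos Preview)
Your proof is correct and follows essentially the same approach as the paper: identify $\fu^{-}_{\1} \cong V_{m}^{*} \boxtimes \C \boxtimes V_{k}$ as an $\fl_{\0}$-module, embed the exterior algebra into the tensor algebra, and then track highest weights via iterated Pieri. The paper's proof simply records the identification of $\fu^{-}_{\1}$ and then defers to the argument of \cref{L:BreakitDown}, so you have merely made explicit the case-by-case gap estimate that the paper leaves implicit.
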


\begin{proof}  As $\fl_{\0} \cong \mathfrak{gl}(m) \oplus \mathfrak{gl}(n-k) \oplus \mathfrak{gl}(k)$-modules we have that $\fu^{-}_{\1}$ is isomorphic to $V_{m}^{*} \boxtimes \C \boxtimes V_{k}$, where, as before, $V_{q}$ denotes the natural module of $\mathfrak{gl}(q)$.  Moreover,  
\[
L_{\fl_{\0}}(\gamma) = L_{\mathfrak{gl}(m)}(\gamma_{1}, \dotsc , \gamma_{m}) \boxtimes L_{\mathfrak{gl}(n-k)}(\gamma_{m+1}, \dotsc , \gamma_{m+n-k}) \boxtimes L_{\mathfrak{gl}(k)}(\gamma_{m+n-k+1}, \dotsc , \gamma_{m+n}).
\]  The claim follows as in the proof of the previous lemma using Pieri's formula \cite[(5.16)]{Macdonald}.
\end{proof}

We can now prove a coarse version of the parabolic Borel-Weil-Bott Theorem in the super setting for sufficiently dominant weights. Note we write it using the bound $d$ as it will be needed in what follows.

\begin{theorem}\label{T:SuperBWB}  Fix $d \geq 0$.  Let $\lambda \in X_{0}^{+}$ with $\lambda_{i}-\lambda_{i+1} >  d + mn$ for $i=1, \dotsc , m-1, m+1, \dotsc , m+n-1$.  Then 
\[
H^{j}(G/P, L_{\fp}(\lambda)^{*}) =  0
\] for $j>0$.  Furthermore, as $\fg_{\0}$-modules
\[
H^{0}(G/P, L_{\fp}(\lambda)^{*}) \cong \bigoplus_{\gamma \in \Gamma_{\lambda}}\bigoplus_{\sigma \in \Sigma_{\gamma}} L_{\fg_{\0}}(\sigma)^{*}
\] where $\Gamma_{\lambda}$ and $\Sigma_{\gamma}$ are as in the previous lemmas and, in particular, every $\sigma$ which appears in the direct sum satisfies $\sigma_{i}-\sigma_{i+1} > d$ for $i=1, \dotsc , m-1,m+1, \dotsc , m+n-1$.
\end{theorem}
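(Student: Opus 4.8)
The plan is to transfer the computation to ordinary parabolic Borel--Weil--Bott on the reductive group $G_{\0}$ by means of \cref{P:KeyFact}, and then to feed in the two decomposition lemmas above while carefully bookkeeping dominance, so that the slack $mn$ built into the hypothesis is exactly consumed. First I would apply \cref{P:KeyFact} with $N = L_{\fp}(\lambda)$ to obtain, for all $j \geq 0$, an isomorphism of $\fg_{\0}$-modules
\[
H^{j}(G/P, L_{\fp}(\lambda)^{*}) \;\cong\; H^{j}\!\left(G_{\0}/P_{\0},\; \bigl[\Lambda^{\bullet}(\fg_{\1}/\fp_{\1}) \otimes L_{\fp}(\lambda)\bigr]^{*}\right).
\]
Since $\fg = \fu^{-} \oplus \fl \oplus \fu^{+}$ we have $\fg_{\1} = \fu^{-}_{\1} \oplus \fp_{\1}$, so $\fg_{\1}/\fp_{\1} \cong \fu^{-}_{\1}$ as $\fl_{\0}$-modules; for $P = P_{k}$ this is exactly the module $V_{m}^{*} \boxtimes \C \boxtimes V_{k}$ appearing in \cref{L:TensorDecomp}, and $L_{\fp}(\lambda) = L_{\fl}(\lambda)$ because $\fu^{+}$ acts nilpotently on a simple $\fp$-module. (If $k = 0$ then $P = G$ and both claims of the theorem are immediate, so assume $1 \leq k \leq n$.) Thus the right-hand side is $H^{j}(G_{\0}/P_{\0}, [\Lambda^{\bullet}(\fu^{-}_{\1}) \otimes L_{\fl}(\lambda)]^{*})$.

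Next I would decompose the $\fl_{\0}$-module $\Lambda^{\bullet}(\fu^{-}_{\1}) \otimes L_{\fl}(\lambda)$ in two steps. Applying \cref{L:BreakitDown} with $D = d + mn$ (legitimate since $d + mn > m(n-k)$) gives $L_{\fl}(\lambda) \cong \bigoplus_{\gamma \in \Gamma_{\lambda}} L_{\fl_{\0}}(\gamma)$ with $\gamma_{i} - \gamma_{i+1} > d + mn - m(n-k) = d + mk$ for $i = 1, \dots, m-1, m+1, \dots, m+n-1$; then applying \cref{L:TensorDecomp} to each summand with $D = d + mk$ (legitimate since $d + mk \geq km$) gives $\Lambda^{\bullet}(\fu^{-}_{\1}) \otimes L_{\fl_{\0}}(\gamma) \cong \bigoplus_{\sigma \in \Sigma_{\gamma}} L_{\fl_{\0}}(\sigma)$ with $\sigma_{i} - \sigma_{i+1} > d + mk - km = d$ for the same indices. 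Hence as $\fl_{\0}$-modules
\[
\Lambda^{\bullet}(\fg_{\1}/\fp_{\1}) \otimes L_{\fp}(\lambda) \;\cong\; \bigoplus_{\gamma \in \Gamma_{\lambda}} \bigoplus_{\sigma \in \Sigma_{\gamma}} L_{\fl_{\0}}(\sigma),
\]
with every $\sigma$ satisfying $\sigma_{i} - \sigma_{i+1} > d$. Moreover $\fu^{+}_{\0}$ acts trivially on $\fg_{\1}/\fp_{\1}$ (one checks $[\fu^{+}_{\0}, \fu^{-}_{\1}] \subseteq \fp_{\1}$) and on the inflated module $L_{\fp}(\lambda)$, so $[\Lambda^{\bullet}(\fg_{\1}/\fp_{\1}) \otimes L_{\fp}(\lambda)]^{*}$ is, as a $\fp_{\0}$-module, the direct sum $\bigoplus_{\gamma, \sigma} L_{\fl_{\0}}(\sigma)^{*}$ of inflations of simple $\fl_{\0}$-modules, each with highest weight controlled by the bound $\sigma_{i} - \sigma_{i+1} > d \geq 0$.

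Finally I would invoke the classical parabolic Borel--Weil--Bott theorem on $G_{\0}$ over $\C$. Since cohomology distributes over the above direct sum, it suffices to treat one factor $L_{\fl_{\0}}(\sigma)^{*}$: the strong-dominance bound on $\sigma$ keeps the relevant weight regular after adding $\rho$ and places it in the range where $H^{>0}(G_{\0}/P_{\0}, -)$ vanishes, while $H^{0}(G_{\0}/P_{\0}, L_{\fl_{\0}}(\sigma)^{*}) \cong L_{\fg_{\0}}(\sigma)^{*}$ — the last identification being the $G_{\0}$-analogue of $H^{0}(G/Q^{\pm}, L_{0}(\mu)^{*})^{*} \cong K^{\pm}(\mu)$ recorded earlier, which pins down the (co)induction normalisation. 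Summing over $\gamma$ and $\sigma$ then yields $H^{>0}(G/P, L_{\fp}(\lambda)^{*}) = 0$ and $H^{0}(G/P, L_{\fp}(\lambda)^{*}) \cong \bigoplus_{\gamma \in \Gamma_{\lambda}} \bigoplus_{\sigma \in \Sigma_{\gamma}} L_{\fg_{\0}}(\sigma)^{*}$, as claimed. The step I expect to be the real obstacle is this last one: matching the coinduction normalisation $H^{j}(G/P, -) = \Gamma^{j}\Hom_{U(\fp)}(U(\fg), -)$ against the classical statement on $G_{\0}/P_{\0}$, and checking that after dualizing the weights $\sigma$ still fall strictly on the correct side of the wall — the surplus $mn = m(n-k) + mk$ in the hypothesis is precisely the sum of the two losses incurred in \cref{L:BreakitDown,L:TensorDecomp}.
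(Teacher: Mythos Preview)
Your argument is correct and follows the paper's approach closely: reduce via \cref{P:KeyFact} to $G_{\0}/P_{\0}$, decompose $\Lambda^{\bullet}(\fg_{\1}/\fp_{\1})\otimes L_{\fp}(\lambda)$ using \cref{L:BreakitDown,L:TensorDecomp} with the bookkeeping $mn = m(n-k)+mk$, and finish with classical parabolic Borel--Weil--Bott.

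There is one minor difference worth noting. The paper only observes that the $\fl_{\0}$-decomposition furnishes a $\fp_{\0}$-composition series with dominant regular factors, and then argues by induction on the length of that composition series (using the long exact sequence in cohomology). You instead check directly that $[\fu^{+}_{\0},\fu^{-}_{\1}]\subseteq\fp_{\1}$, so that $\fu^{+}_{\0}$ acts trivially on $\fg_{\1}/\fp_{\1}$ and on $L_{\fp}(\lambda)$, whence the $\fl_{\0}$-decomposition is already a $\fp_{\0}$-decomposition and BWB applies summand by summand. This is a slight sharpening that bypasses the induction; both routes are valid. Your closing worry about normalisation is not a genuine gap: \cref{P:KeyFact} already matches the two conventions, and the paper's proof is no more explicit on this point than you are.
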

 
\begin{proof} For the statement of the theorem it suffices to compute the induced module as a $\fg_{\0}$-module.  By Proposition~\ref{P:KeyFact},
\[
H^{j}(G/P, L_{\fp}(\lambda)^{*}) \cong  H^{j}(G_{\0}/P_{\0}, \left[ \Lambda^{\bullet}(\fg_{\1}/\fp_{\1}) \otimes L_{\fp}(\lambda)\right]^{*}).
\]  From our previous two lemmas we have an isomorphism as $\fl_{\0}$-modules: 
\[
\left[\Lambda^{\bullet}(\fg_{\1}/\fp_{\1}) \otimes L_{\fp}(\lambda) \right]^{*} \cong   \bigoplus_{\gamma \in \Gamma_{\lambda}} \bigoplus_{\sigma \in \Sigma_{\gamma}} L_{\fl_{\0}}(\sigma)^{*}
\] where each $\sigma$ satisfies $\sigma_{i}-\sigma_{i+1} > d $ for $i=1, \dotsc , m-1, m+1, \dotsc , m+n-1$.  In particular, each $\sigma$ is dominant regular.  That is, as a $\fp_{\0}$-module $\left[\Lambda^{\bullet}(\fg_{\1}/\fp_{\1}) \otimes L_{\fp}(\lambda) \right]^{*}$ has a composition series consisting of simple modules with dominant regular highest weights.  The theorem then follows by using the non-super parabolic Borel-Weil-Bott Theorem and an induction on the length of the composition series of modules with such a composition series.
\end{proof}

\subsection{The Parabolic Grading}\label{SS:gradings} 

Fix a positive real number $a$ and let $h \in T$  be the diagonal matrix 
\[
h = \operatorname{diag}(a, \dotsc ,a,a^{-1}, \dotsc ,a^{-1})
\]
where there are $m+n-k$ $a$'s and $k$ $a^{-1}$'s.  The adjoint action of $h$ on $\fg$ defines a $\Z$-grading compatible with our choice of parabolic.  That is, $h$ acts by $a^{-2}$, $1$, and $a^{2}$, respectively, on $\fu^{-}$, $\fl$, and $\fu^{+}$ and with the $\Z$-degree given by the exponent.  This 
element can be used to give a compatible $\Z$-grading to any weight module where the degree of a weight vector is determined by its weight.  Namely, for any integral weight $\mu =\sum_{i=1}^{m+n}\mu_{i}\varepsilon_{i}$ the degree is $\sum_{i=1}^{m+n-k}\mu_{i} - \sum_{i=m+n-k+1}^{m+n}\mu_{i}$ and we write $\deg(\mu)$ for this degree.  We call this the \emph{parabolic grading} to distinguish it from the $\Z_{2}$-grading and the $\Z$-grading introduced in Section~\ref{SS:IntroGeometricInduction} (although the two $\Z$-gradings coincide when $k=n$). 

Given a weight module $M$ we write $M_{t}$ for the $t$-th parabolic graded summand of $M$.
For example, the parabolic Verma module $U(\fg ) \otimes_{U(\fp )} L_{\fp}(\lambda)$ has a parabolic grading and by the PBW theorem it is non-zero only in degrees less than or equal to $\deg (\lambda)$.  As this module surjects onto the simple module $L_{\fg}(\lambda)$, it follows that the non-zero degrees of $L_{\fg}(\lambda)$ are also bounded above by $\deg (\lambda)$.  Similarly, since there exists a surjective $\fg$-module homomorphism by \cite[Lemma 2]{GS},
\begin{equation}\label{E:parabolicquotientmap}
U(\fg ) \otimes_{U(\fp )} L_{\fp}(\lambda) \to H^{0}(G/P, L_{\fp}(\lambda)^{*})^{*},
\end{equation}
it follows that the non-zero degrees of $H^{0}(G/P, L_{\fp}(\lambda)^{*})^{*}$ are bounded above by $\deg (\lambda)$.

The parabolic grading, \eqref{E:parabolicquotientmap} and Theorem~\ref{T:SuperBWB}  will be used to obtain a partial description of $H^{0}(G/P, L_{\fp}(\lambda)^{*})^{*}$ when $\lambda$ is sufficiently dominant.  To do so we will need the following lemma.

\begin{lemma}\label{L:purelyevengradings}  Fix $d \ge 0$.  Let $L_{\fg_{\0}}(\tau)$ be a finite dimensional $\fg_{\0}$-module of highest weight $\tau$ satisfying $\tau_{i}-\tau_{i+1}>d$ for $i=1, \dotsc , m-1, m+1, \dotsc , m+n-1$.  Then as $\fl_{\0}$-modules
\[
S^{t}(\fg_{\0}/\fp_{\0}) \otimes L_{\fp_{\0}}(\tau) \cong \left(U(\fg_{\0}) \otimes_{U(\fp_{\0})} L_{\fp_{\0}}(\tau) \right)_{\deg(\tau)-t} \cong  L_{\fg_{\0}}(\tau)_{\deg(\tau)-t}
\] for $t=0, \dotsc , d$.  

\end{lemma}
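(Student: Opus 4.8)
The plan is to treat the two displayed isomorphisms separately; the first is formal (PBW plus the parabolic grading) and the second is the real content.

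\emph{First isomorphism.} Write $M = U(\fg_{\0})\otimes_{U(\fp_{\0})}L_{\fp_{\0}}(\tau)$. As an $\fl_{\0}$-module $L_{\fp_{\0}}(\tau)\cong L_{\fl_{\0}}(\tau)$ (the nilradical of $\fp_{\0}$ acts trivially) and, since $\fg_{\0}/\fp_{\0}\cong\fu^{-}_{\0}$ is abelian, the PBW theorem gives $M\cong S^{\bullet}(\fg_{\0}/\fp_{\0})\otimes L_{\fl_{\0}}(\tau)$ as $\fl_{\0}$-modules. In the parabolic grading of \cref{SS:gradings}, $\fg_{\0}/\fp_{\0}\cong\fu^{-}_{\0}$ sits in degree $-1$ and every weight of $L_{\fl_{\0}}(\tau)$ has degree $\deg(\tau)$, since weights of $L_{\fl_{\0}}(\tau)$ differ from $\tau$ by sums of $\fl_{\0}$-roots and these have parabolic degree $0$. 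Hence the degree-$(\deg(\tau)-t)$ summand of $M$ is exactly $S^{t}(\fg_{\0}/\fp_{\0})\otimes L_{\fl_{\0}}(\tau)$, which is the first isomorphism. It then remains to prove that the canonical surjection $\varphi\colon M\twoheadrightarrow L_{\fg_{\0}}(\tau)$ (a map of parabolically graded modules) is an isomorphism in degrees $\geq\deg(\tau)-D$; equivalently, that $\Rad(M)_{s}=0$ for all $s\geq\deg(\tau)-D$.

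\emph{Setting up the linkage.} Since $\tau$ is regular dominant for $\fg_{\0}=\gl(m)\oplus\gl(n)$ (its $\gl(m)$- and $\gl(n)$-coordinate gaps exceed $D>0$), the $\tau$-weight space of $M$ is one-dimensional, so $[M:L_{\fg_{\0}}(\tau)]=1$ and $L_{\fg_{\0}}(\tau)$ is the head of the highest weight module $M$; by the linkage principle in category $\mathcal{O}$ for $\fg_{\0}$, every composition factor of $\Rad(M)$ is therefore $L_{\fg_{\0}}(w\cdot\tau)$ for a unique $w=(w_{m},w_{n})\in W=S_{m}\times S_{n}$ with $w\neq e$. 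The parabolic grading depends only on the $\gl(n)$-coordinates of a weight (the matrix $h$ is scalar on the $\gl(m)$-block), and $h$ separates the first $n-k$ of these from the last $k$. I claim every such factor lies strictly below parabolic degree $\deg(\tau)-D$.

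\emph{The degree estimate.} First, no composition factor of $\Rad(M)$ lies in the top degree $\deg(\tau)$: $M_{\deg(\tau)}=L_{\fl_{\0}}(\tau)$ is $\fl_{\0}$-simple and generates $M$ over $U(\fg_{\0})$, so the proper $\fl_{\0}$-submodule $\Rad(M)_{\deg(\tau)}$ is $0$, while a composition factor $L_{\fg_{\0}}(\mu)$ with $\deg(\mu)=\deg(\tau)$ would force $\dim\Rad(M)_{\mu}\geq[\Rad(M):L_{\fg_{\0}}(\mu)]\geq1$, hence $\Rad(M)_{\deg(\tau)}\neq0$. Since $\deg(w\cdot\tau)=\deg(\tau)$ precisely when $w_{n}\in S_{n-k}\times S_{k}\subseteq S_{n}$, we may assume $w_{n}\notin S_{n-k}\times S_{k}$. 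Let $\xi=(\xi_{1},\dots,\xi_{n})$ be the $\gl(n)$-coordinate vector of $\tau+\rho$; it is strictly decreasing with consecutive gaps $>D+1$, and from $\mu+\rho=w(\tau+\rho)$ one computes
\[
\deg(\tau)-\deg(\mu)=\sum_{i=1}^{n-k}\xi_{i}-\sum_{i\in S}\xi_{i},\qquad S:=w_{n}^{-1}(\{1,\dots,n-k\}),\quad |S|=n-k.
\]
As $w_{n}\notin S_{n-k}\times S_{k}$ we have $S\neq\{1,\dots,n-k\}$, so $A:=\{1,\dots,n-k\}\setminus S$ and $B:=S\setminus\{1,\dots,n-k\}$ are non-empty of the same size; every index in $A$ is $\leq n-k$ and every index in $B$ is $\geq n-k+1$, so $\sum_{i\in A}\xi_{i}-\sum_{i\in B}\xi_{i}\geq\xi_{n-k}-\xi_{n-k+1}>D$. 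Thus $\deg(\mu)<\deg(\tau)-D$ for every composition factor $L_{\fg_{\0}}(\mu)$ of $\Rad(M)$, and since $L_{\fg_{\0}}(\mu)$ is supported in parabolic degrees $\leq\deg(\mu)$ (the parabolic degree is non-decreasing along the dominance order on the $\fg_{\0}$-root lattice), we conclude $\Rad(M)_{s}=0$ for all $s\geq\deg(\tau)-D$. Combined with the first isomorphism, this proves the lemma.

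\emph{Main obstacle.} The combinatorial degree estimate — verifying that the very dominant hypothesis $\tau_{i}-\tau_{i+1}>D$ forces every nontrivially linked weight either into the (separately excluded) top degree or below $\deg(\tau)-D$ — is the only non-routine point; the remainder is standard category-$\mathcal{O}$ bookkeeping together with the explicit parabolic grading.
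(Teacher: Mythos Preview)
Your argument is correct, but it takes a different route from the paper. The paper's proof is shorter and more direct: it invokes the explicit presentation of $L_{\fg_{\0}}(\tau)$ from \cite[Section~21.4]{humphreysLieAlg}, namely that the annihilator of the highest weight vector in $U(\fg_{\0})$ is generated (beyond the Borel relations) by $f_{\alpha_i}^{\langle\tau,\alpha_i^\vee\rangle+1}$ for the simple roots $\alpha_i$. The relations for simple roots inside $\fl_{\0}$ already hold in the parabolic Verma module $M$, so the kernel of $M\to L_{\fg_{\0}}(\tau)$ is generated by the single remaining element $f_\beta^{m_\beta+1}v_\tau$ with $\beta=\varepsilon_{m+n-k}-\varepsilon_{m+n-k+1}$ and $m_\beta=\tau_{m+n-k}-\tau_{m+n-k+1}>D$. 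This is a singular vector of parabolic degree $\deg(\tau)-(m_\beta+1)<\deg(\tau)-D$, so the submodule it generates lies entirely below degree $\deg(\tau)-D$, and the surjection is an isomorphism in the required range.

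Your approach instead uses linkage in category $\mathcal{O}$: you enumerate the possible composition factors $L_{\fg_{\0}}(w\cdot\tau)$ of $\Rad(M)$, exclude by a direct argument those with $w_n\in S_{n-k}\times S_k$ (since $\Rad(M)_{\deg(\tau)}=0$), and for the rest give a combinatorial estimate showing $\deg(w\cdot\tau)<\deg(\tau)-D$. This is sound and self-contained, but somewhat longer; the paper's argument avoids the case split entirely because it identifies the single generator of the kernel rather than cataloguing all potential subquotients. Your method would transport more readily to situations where an explicit presentation of the simple module is unavailable, while the paper's is the minimal argument tailored to $\gl(m)\oplus\gl(n)$ with a maximal parabolic.
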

\begin{proof}  Let $M=U(\fg_{\0}) \otimes_{U(\fp_{\0})} L_{\fp_{\0}}(\tau)$ be the parabolic Verma $\fg_{\0}$-module of highest weight $\tau$.  Then one has a surjective map $M \to L_{\fg_{\0}}(\tau)$ which preserves the parabolic grading.  Using the PBW theorem for Lie algebras and the description of the left ideal which is the annihilator of the highest weight vector of $L_{\fg_{\0}}(\tau)$ (see \cite[Section 21.4, Proposition 23.2]{humphreysLieAlg}), we see that the graded surjective map is actually an isomorphism for degrees $\deg(\tau), \dotsc , \deg(\tau) - d$. This implies the second isomorphism and the first is from the PBW theorem.
\end{proof}

We can now prove that for sufficiently dominant weights the map \eqref{E:parabolicquotientmap} is an isomorphism for the degrees which are sufficiently close to $\deg (\lambda)$.  In the statement of the theorem we use the notation $S^{t}_{\text{super}}(\fg /\fp )$ to denote the $t$-th supersymmetric power of $\fg /\fp$.

\begin{theorem}\label{T:grading} Fix $d \geq 0$ and let $L_{\fp}(\lambda)$ be a finite dimensional simple $\fp$-module with $\lambda_{i}-\lambda_{i+1} > D=d+mn$ for $i=1, \dotsc , m-1,m+1, \dotsc , m+n-1$.  Then 
\[
S_{\rm{super}}^{t}(\fg/\fp) \otimes L_{\fp}(\lambda) \cong \left[ U(\fg ) \otimes_{U(\fp )} L_{\fp}(\lambda) \right]_{\deg(\lambda)-t} \cong \left[ H^{0}(G/P, L_{\fp}(\lambda)^{*})^{*}\right]_{\deg(\lambda)-t}
\] for $t = 0, \dotsc , d$.
\end{theorem}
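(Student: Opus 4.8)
The plan is to prove the two displayed isomorphisms in turn: the first is essentially formal, and the second is obtained by comparing the surjection \eqref{E:parabolicquotientmap} with a computation of the parabolic graded pieces of its target. For the first isomorphism I would invoke the PBW theorem for Lie superalgebras, which gives $U(\fg)\cong U(\fu^{-})\otimes U(\fp)$ as right $U(\fp)$-modules and hence $U(\fg)\otimes_{U(\fp)}L_{\fp}(\lambda)\cong U(\fu^{-})\otimes L_{\fp}(\lambda)\cong S^{\bullet}_{\text{super}}(\fu^{-})\otimes L_{\fp}(\lambda)$ as $\fl$-modules. Since $\fl$ is the centralizer of the element $h$ of \cref{SS:gradings}, $L_{\fp}(\lambda)=L_{\fl}(\lambda)$ is homogeneous of parabolic degree $\deg(\lambda)$, while $\fu^{-}=\fg/\fp$ is homogeneous of parabolic degree $-1$; therefore the graded piece in degree $\deg(\lambda)-t$ is precisely $S^{t}_{\text{super}}(\fu^{-})\otimes L_{\fp}(\lambda)=S^{t}_{\text{super}}(\fg/\fp)\otimes L_{\fp}(\lambda)$, and this holds for all $t\ge 0$. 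Because \eqref{E:parabolicquotientmap} is a degree-preserving surjection of $\fg$-modules (hence of $\fl_{\0}$-modules), it then suffices to show, for $t=0,\dots,d$, that $\bigl[H^{0}(G/P, L_{\fp}(\lambda)^{*})^{*}\bigr]_{\deg(\lambda)-t}$ has the same dimension as (indeed, is $\fl_{\0}$-isomorphic to) $S^{t}_{\text{super}}(\fg/\fp)\otimes L_{\fp}(\lambda)$.

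To compute that graded piece I would dualize \cref{T:SuperBWB} to get $H^{0}(G/P, L_{\fp}(\lambda)^{*})^{*}\cong\bigoplus_{\gamma\in\Gamma_{\lambda}}\bigoplus_{\sigma\in\Sigma_{\gamma}}L_{\fg_{\0}}(\sigma)$ as $\fg_{\0}$-modules, with every $\sigma$ satisfying $\sigma_{i}-\sigma_{i+1}>d$, and then track the parabolic grading (the $h$-eigenspace decomposition) carefully, noting that $\fl_{\0}$ preserves it even though $\fg_{\0}$ does not. By \cref{L:BreakitDown}, $\bigoplus_{\gamma\in\Gamma_{\lambda}}L_{\fl_{\0}}(\gamma)\cong L_{\fp}(\lambda)$ is homogeneous of parabolic degree $\deg(\lambda)$; since $\fu^{-}_{\1}=\fg_{\1}/\fp_{\1}$ is homogeneous of parabolic degree $-1$, the set $\Sigma_{\gamma}$ of \cref{L:TensorDecomp} splits by exterior degree as $\Sigma_{\gamma}=\bigsqcup_{q\ge 0}\Sigma_{\gamma}^{(q)}$ with $\bigoplus_{\sigma\in\Sigma_{\gamma}^{(q)}}L_{\fl_{\0}}(\sigma)\cong\Lambda^{q}(\fu^{-}_{\1})\otimes L_{\fl_{\0}}(\gamma)$, so every $\sigma\in\Sigma_{\gamma}^{(q)}$ is homogeneous with $\deg(\sigma)=\deg(\lambda)-q$. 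Finally, for each such $\sigma$ the simple $\fg_{\0}$-module $L_{\fg_{\0}}(\sigma)$ has its parabolic grading concentrated in degrees $\le\deg(\sigma)$, and by \cref{L:purelyevengradings} applied with $D=d$ (legitimate since $\sigma_{i}-\sigma_{i+1}>d$) its graded piece in degree $\deg(\sigma)-s$ is $S^{s}(\fg_{\0}/\fp_{\0})\otimes L_{\fp_{\0}}(\sigma)$ for $0\le s\le d$.

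Assembling this, fix $0\le t\le d$: the summand $L_{\fg_{\0}}(\sigma)$ with $\sigma\in\Sigma_{\gamma}^{(q)}$ contributes to parabolic degree $\deg(\lambda)-t$ only its piece in degree $\deg(\sigma)-(t-q)$, which is $S^{t-q}(\fg_{\0}/\fp_{\0})\otimes L_{\fl_{\0}}(\sigma)$ when $q\le t$ (and $0$ when $q>t$), the bound $0\le t-q\le d$ holding automatically. Summing over $q\le t$, $\sigma\in\Sigma_{\gamma}^{(q)}$, and $\gamma\in\Gamma_{\lambda}$, and using $\bigoplus_{\sigma\in\Sigma_{\gamma}^{(q)}}L_{\fl_{\0}}(\sigma)\cong\Lambda^{q}(\fu^{-}_{\1})\otimes L_{\fl_{\0}}(\gamma)$ together with $\bigoplus_{\gamma}L_{\fl_{\0}}(\gamma)\cong L_{\fp}(\lambda)$, this collapses to $\bigoplus_{p+q=t}S^{p}(\fg_{\0}/\fp_{\0})\otimes\Lambda^{q}(\fu^{-}_{\1})\otimes L_{\fp}(\lambda)=S^{t}_{\text{super}}(\fg/\fp)\otimes L_{\fp}(\lambda)$ as $\fl_{\0}$-modules. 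This matches the target of the first isomorphism, so the degree-preserving surjection \eqref{E:parabolicquotientmap} is an isomorphism in each of the finitely many degrees $\deg(\lambda),\dots,\deg(\lambda)-d$, which is exactly the assertion. I expect the only real difficulty to be the parabolic-degree bookkeeping — verifying $\deg(\sigma)=\deg(\lambda)-q$, confirming that the required range $0\le t-q\le d$ always lies inside the range of validity of \cref{L:purelyevengradings}, and running everything $\fl_{\0}$-equivariantly since $\fg_{\0}$ does not respect the parabolic grading; no new technical input beyond \cref{P:KeyFact}, \cref{T:SuperBWB}, and \cref{L:BreakitDown,L:TensorDecomp,L:purelyevengradings} should be needed.
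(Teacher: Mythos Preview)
Your proposal is correct and follows essentially the same approach as the paper: both arguments use \cref{T:SuperBWB} to decompose $H^{0}(G/P,L_{\fp}(\lambda)^{*})^{*}$ as a $\fg_{\0}$-module, then apply \cref{L:purelyevengradings} to each $L_{\fg_{\0}}(\sigma)$ and unwind via \cref{L:BreakitDown,L:TensorDecomp}, matching against the PBW description of the parabolic Verma. Your version is arguably cleaner on the degree bookkeeping --- you compute $\deg(\sigma)=\deg(\lambda)-q$ explicitly, whereas the paper only argues $\deg(\sigma)\le\deg(\lambda)$ from \eqref{E:parabolicquotientmap} and then checks the ranges match up --- but the substance is the same.
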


\begin{proof}  
Fix $t \in \{ 0, \dotsc , d \}$.  Combining \cref{T:SuperBWB} and Lemma~\ref{L:purelyevengradings} we have that 
\begin{align*}
 \left[H^{0}(G/P, L_{\fp}(\lambda)^{*})^{*} \right]_{\deg(\lambda)-t} &\cong \bigoplus_{\gamma \in \Gamma_{\lambda}}\bigoplus_{\sigma \in \Sigma_{\gamma}} L_{\fg_{\0}}(\sigma)_{\deg(\lambda)-t} \\
                                      & \cong  \bigoplus_{\gamma \in \Gamma_{\lambda}}\bigoplus_{\sigma \in \Sigma_{\gamma}}  \left[S^{\bullet}(\fg_{\0}/\fp_{\0}) \otimes L_{\fp_{\0}}(\sigma) \right]_{\deg(\lambda)-t}.
\end{align*}  

Let us briefly explain the second line.  We first observe that in the parabolic grading $L_{\fg_{\0}}(\mu)$ is non-zero only in degrees less than or equal to $\deg (\mu)$ for any dominant $\mu$.  Furthermore, \eqref{E:parabolicquotientmap} implies that if $L_{\fg_{\0}}(\sigma)$ is a composition factor of $H^{0}(G/P, L_{\fp}(\lambda)^{*})^{*}$, then it is a composition factor of $U(\fg ) \otimes_{U(\fp )}L_{\fp}(\lambda)$.  Taken together this implies that if $L_{\fg_{\0}}(\sigma)$ appears in the first line then we must have that $\deg (\sigma) \leq \deg (\lambda)$ and it can contribute only to degrees less than or equal to $\deg (\sigma)$.

Now since $\sigma_{i}-\sigma_{i+1}>d$ for $i=1, \dotsc , m-1,m+1,\dotsc , m+n-1$ for all $\sigma$ which appear in the first line it follows that Lemma~\ref{L:purelyevengradings} can be applied to the degrees $\deg(\sigma), \dotsc , \deg(\sigma)-d$.  That is, it holds for $d$ degrees down from $\deg (\sigma)$ for each $\sigma$. Taken together with the previous paragraph we see that the above isomorphism holds for  $d$ degrees down from $\deg (\lambda)$.

Now, to prove the stated theorem we merely need to unwind the above isomorphisms using Lemmas~\ref{L:BreakitDown}~and~\ref{L:TensorDecomp}:
\begin{align*}
 \left[H^{0}(G/P, L_{\fp}(\lambda)^{*})^{*} \right]_{\deg(\lambda)-t} 
                                      & \cong  \bigoplus_{\gamma \in \Gamma_{\lambda}}\bigoplus_{\sigma \in \Sigma_{\gamma}}  \left[S^{\bullet}(\fg_{\0}/\fp_{\0}) \otimes L_{\fp_{\0}}(\sigma) \right]_{\deg(\lambda)-t} \\
                                      & \cong \bigoplus_{\gamma \in \Gamma_{\lambda}}\left[ S^{\bullet}(\fg_{\0}/\fp_{\0}) \otimes \left(\bigoplus_{\sigma \in \Sigma_{\gamma}} L_{\fp_{\0}}(\sigma) \right)\right]_{\deg(\lambda)-t} \\
 &  \cong  \bigoplus_{\gamma \in \Gamma_{\lambda}}\left[ S^{\bullet}(\fg_{\0}/\fp_{\0}) \otimes \Lambda^{\bullet}\left(\fu^{-}_{\1} \right) \otimes L_{\fl_{\0}}(\gamma)  \right]_{\deg(\lambda)-t} \\
&  \cong  \left[ S^{\bullet}(\fg_{\0}/\fp_{\0}) \otimes \Lambda^{\bullet}\left(\fu^{-}_{\1} \right) \otimes \left(\bigoplus_{\gamma \in \Gamma_{\lambda}}  L_{\fl_{\0}}(\gamma)  \right)\right]_{\deg(\lambda)-t} \\
& \cong  \left[ S^{\bullet}(\fu_{\0}^{-}) \otimes \Lambda^{\bullet}\left(\fu^{-}_{\1} \right) \otimes L_{\fp}(\lambda)\right]_{\deg(\lambda)-t} \\
& \cong  \left[U(\fg ) \otimes_{U(\fp )} L_{\fp}(\lambda) \right]_{\deg(\lambda)-t}.\qedhere
\end{align*} 
 \end{proof}

\section{Support Varieties}\label{S:VarietiesandGeometricInduction}

\subsection{}\label{SS:varietynotations}  We are now prepared to compute the support varieties of various $\fg$-modules. This will enable us to verify that in this setting (\ref{SS:supportdata}.\ref{E:supporteight}) holds for the support data $\widehat{V}$ of \cref{S:MainTheorem}.

To proceed we need to set notation.  Fix $\fg = \gl (m|n)$ and write $\ff$ for the detecting subalgebra of $\fg$.  Since $\gl (m|n) \cong \gl (n|m)$ we can and will assume that $m \leq n$ from this point on.  In particular $\ff_{\1}$ has a basis given by the matrix units $$e_{1, 2m}, e_{2, 2m-1}, \dotsc , e_{m, m+1}, e_{m+1,m}, \dotsc , e_{2m,1}.$$    Let $T$ denote the torus of $G_{\0}$ consisting of diagonal matrices.   Then $T$ acts on $\fg$ by the adjoint action and, in particular, the above basis for $\ff_{\1}$ consists of weight vectors.

As mentioned \cref{S:MainTheorem}, the underlying geometry in this setting is given by the spectrum of the cohomology ring of $\ff$; that is, the ring
\[
R:=\HH^{\bullet}(\ff ,\ff_{\0};\C ) \cong S^{\bullet}(\ff_{\1}^{*})=\C[\ff_{\1}]\cong \C [X_{1},X_{2},\dots, X_{m},Y_{1},Y_{2},\dots,Y_{m}],
\] where $X_j$ and $Y_j$ are defined below. In particular, 
\[
V_{({\mathfrak f},{\mathfrak f}_{\0})}(\C)_{\max}\cong V^{r}_{\ff_{\1}}(\C ) = \operatorname{Proj}\left(\operatorname{MaxSpec}(R) \right) = \operatorname{Proj}(\ff_{\1}).
\]

We set coordinates by identifying $R$ with the polynomial ring on $2m$ variables.  This is done by letting  $X_{j}: \ff_{\1} \to \C$ be the linear functional given on our basis of matrix units for $\ff_{\1}$ by $X_{j}\left({e_{m-j+1, m+j}} \right)=1$ and otherwise zero.  Similarly,  we define  $Y_{j}: \ff_{\1} \to \C$ to be the linear functional given on our basis by $Y_{j}\left({e_{m+j, m-j+1}}\right)=1$ and otherwise zero.  
 
Let $\Sigma_{m}$ denote the symmetric group on $m$ letters embedded diagonally in $G_{\0} \cong \operatorname{GL}(m) \times \operatorname{GL}(n)$ as permutation matrices so that the action of $\Sigma_{m}$ on $R$ is via simultaneous permutation of $X_{1}, \dotsc , X_{m}$ and $Y_{1}, \dotsc , Y_{m}$.  Let $N=\operatorname{Norm}_{G_{\0}}(H)$ as in \cref{SS:detectingsubalgebras} or, equivalently, $N=\operatorname{Norm}_{G_{\0}}(\ff_{\1})$.  A direct calculation (see \cite[Section 8.11]{BKN1} for the analogous calculation for $\mathfrak{sl}(n|n)$) verifies that $N=\Sigma_{m}TK$ where $K\cong \operatorname{GL}(n-m)$ is the subgroup of $G$ consisting of $(n-m) \times (n-m)$ invertible matrices in the lower right corner of $G$.  It is convenient to redefine
\[
N=\Sigma_{m}T
\] and instead consider the action of this group on $\ff_{\1}$.  Since $K$ fixes $R$ pointwise there is no harm in doing this.
 
As explained in \cref{SS:glmnsupportdata}, to verify that (\ref{SS:supportdata}.\ref{E:supporteight}) holds for the support data $\widehat{V}$ it suffices to prove that we can realize every $N$-invariant closed subvariety of $V^{r}_{\ff_{\1}}(\C) \cong \Proj (\ff_{\1})$ as $V^{r}_{\ff_{\1}}(M)$ for some $M \in \FF_{(\fg,\fg_{\0})}$.  Let $W$ be such a variety.  Since $\Sigma_{m}$ is a finite group we may write 
\[
W=\Sigma_{m}V
\] for some $T$-invariant closed subvariety $V$ of $\Proj (\ff_{\1})$.  Furthermore, since the action of $\Sigma_{m}$ distributes across unions we may use this along with (\ref{SS:supportdata}.\ref{E:supporttwo}) to assume without loss that $V$ is irreducible among the $T$-invariant subvarieties.  That is, if  we write $Z(I)$ for the closed subvariety of $\Proj (\ff_{\1})$ determined by a homogeneous ideal $I$ of $R$, it suffices to consider the case when $V=Z(I)$ for a homogeneous $T$-prime ideal $I$.  But since $T$ is connected it follows from \cite[Proposition 19]{Lorenzpaper2} that $I$ is a homogeneous $T$-invariant prime ideal in $R$.  Therefore, $V$ must be of the form   
\begin{equation}\label{E:generalV}
V= Z(X_{a_{1}}, \dotsc , X_{a_{s}}, Y_{b_{1}}, \dotsc , Y_{b_{t}}, g_{1}, \dotsc , g_{r})
\end{equation}
where $g_{1}, \dotsc , g_{r}$ are homogeneous polynomials of weight zero for $T$.   In short, to verify (\ref{SS:supportdata}.\ref{E:supporteight}) in this setting we  must prove that for any $V$ as in \cref{E:generalV} there exists an $M \in \FF_{(\fg , \fg_{\0})}$ such that $V^{r}_{\ff_{\1}}(M)= \Sigma_{m}V$.  This will be accomplished in the following sections. 

Before continuing it is convenient to introduce the following notation. Let $s, t, p$ be non-negative integers with $m\geq s, t \geq p \geq 0$. Set
\begin{equation}\label{E:vanishingcoordinates}
V(s,t,p) = Z(X_{1},\dots,X_{s},Y_{s-p+1},\dots,Y_{s-p+t} ).
\end{equation}
This is the variety given by the vanishing of $s$ $X$ coordinates, $t$ $Y$ coordinates, with $p$ ``overlapping pairs'' (that is, there are precisely $p$ indices $i$ such that $X_{i}$ and $Y_{i}$ are both required to vanish).  It is useful to note that since we may replace $V$ by its conjugate under the action of $\Sigma_{m}$, the case $r=0$ in \cref{E:generalV} will be complete once we realize $\Sigma_{m} V(s,t,p)$. 

\subsection{Weight Zero Polynomials} \label{SS:Weight0} We first  consider the case when $s=t=0$ in \eqref{E:generalV}.  That is, we show how to realize 
\[
\Sigma_{m} Z(g_{1},g_{2},\dots,g_{r}),
\] where $g_{1}, \dotsc , g_{r}$ are homogeneous weight zero polynomials on $\ff_{\1}$, as the support of a module in $\FF_{(\fg , \fg_{\0})}$.  This will be accomplished using Carlson's $L_{\zeta}$ modules.  
These modules are the standard tool used to prove realization in the setting of finite groups and are all that is needed there to prove realization in that setting.

\begin{prop}  Let $g_{1},g_{2},\dots,g_{r}$ be homogeneous polynomials in $R$ of weight zero with respect to $T$. Then there exists a module $M$ in $\FF$ such that 
$V^{r}_{\ff_{\1}}(M)=\Sigma_{m} Z(g_{1},g_{2},\dots,g_{r})$. 
\end{prop}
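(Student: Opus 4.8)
The plan is to adapt the classical construction of Carlson's $L_\zeta$ modules to the Lie superalgebra setting, working inside $\FF = \FF_{(\fg,\fg_\0)}$ and using the rank variety description of $V^r_{\ff_\1}(-)$ together with the detection results already established. First I would recall that for a homogeneous element $\zeta \in R = \HH^\bullet(\fg,\fg_\0;\C)$ of positive degree, viewed as a cohomology class $\zeta \colon \Omega^d(\C) \to \C$ (using that $\FF$ is a Frobenius category, so $\Omega$ makes sense), one defines $L_\zeta$ to be the kernel of a representative map $\hat\zeta \colon \Omega^d(\C) \to \C$ in $\FF$, sitting in a short exact sequence $0 \to L_\zeta \to \Omega^d(\C) \to \C \to 0$ (or, stably, a triangle). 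Since the $g_i$ are weight-zero polynomials on $\ff_\1$, and since $R \cong \HH^\bullet(\fg,\fg_\0;\C)$ restricts isomorphically (after taking $N$-invariants, by \cref{eq:cohoiso}) onto a subring of $\HH^\bullet(\ff,\ff_\0;\C) = S^\bullet(\ff_\1^*)$, I need the $g_i$ to actually be in the image of this restriction map; this is where the weight-zero hypothesis enters, since $N = \Sigma_m T$ and a weight-zero $T$-invariant that is moreover $\Sigma_m$-symmetric lies in $R^N \cong R$, but a general weight-zero $g_i$ need only be $T$-invariant. I would handle this by replacing the collection $\{g_1,\dots,g_r\}$ by the collection of all $\Sigma_m$-translates $\{\sigma \cdot g_j\}$; then $Z(\{\sigma\cdot g_j\})$ is the largest $\Sigma_m$-stable subvariety contained in $Z(g_1,\dots,g_r)$, and by (\ref{SS:supportdata}.\ref{E:supporttwo}) and the distributivity of the $\Sigma_m$-action across unions one checks $\Sigma_m Z(g_1,\dots,g_r) = \bigcup_\sigma \sigma Z(g_1,\dots,g_r)$ is computed from the $\Sigma_m$-orbit of the $g_j$'s — and I can further pass to elementary symmetric combinations to get genuine elements of $R^{\Sigma_m}$, hence of $R \cong \HH^\bullet(\fg,\fg_\0;\C)$.

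Next I would set $M = L_{\zeta_1} \otimes L_{\zeta_2} \otimes \cdots \otimes L_{\zeta_N}$ where $\zeta_1,\dots,\zeta_N$ are the homogeneous generators of $\HH^\bullet(\fg,\fg_\0;\C)$ obtained from the previous step, one for each polynomial (after symmetrization) cutting out the variety. Each $L_\zeta$ is finite dimensional and lies in $\FF$, and $M$ is finite dimensional, so $M \in \bK^c = \operatorname{Stab}(\FF)$. The key computation is then the support of $L_\zeta$: using the rank variety / restriction description, and the fact (\cite{LNZ}, as quoted in \cref{SS:glmnsupportdata}) that restriction to $\ff$ detects projectivity and the cohomology ring of $\fg$ maps isomorphically onto $\HH^\bullet(\ff,\ff_\0;\C)^N$, one gets $V^r_{\ff_\1}(L_\zeta) = Z(\operatorname{res}(\zeta))$ where $\operatorname{res}(\zeta) \in S^\bullet(\ff_\1^*)$. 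This is the standard $L_\zeta$ support computation (cf.\ \cite{BCR:96}, \cite{Evensbook}), carried out via the long exact sequence in cohomology coming from $0 \to L_\zeta \to \Omega^d(\C) \to \C \to 0$ and the support property (\ref{SS:supportdata}.\ref{E:supportfour}); I would note that it goes through verbatim here because all the needed ingredients — finite generation of cohomology, the rank variety description, Dade's lemma — are available from \cref{T:Dade} and \cref{SS:relativecohomology}. Finally, by the tensor product property (\ref{SS:supportdata}.\ref{E:supportfive}), which holds for $\widehat V$ (equivalently for $V^r_{\ff_\1}$ via the bijection of the Proposition preceding \cref{T:torusclassification}), we get
\[
V^r_{\ff_\1}(M) = \bigcap_{i=1}^{N} V^r_{\ff_\1}(L_{\zeta_i}) = \bigcap_{i=1}^N Z(\operatorname{res}(\zeta_i)) = Z(\operatorname{res}(\zeta_1),\dots,\operatorname{res}(\zeta_N)) = \Sigma_m Z(g_1,\dots,g_r),
\]
the last equality by the symmetrization step.

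The main obstacle I anticipate is the bookkeeping in the symmetrization step: making precise that the variety $\Sigma_m Z(g_1,\dots,g_r)$ is exactly cut out by a finite set of elements of $R^{\Sigma_m} \cong \HH^\bullet(\fg,\fg_\0;\C)$, rather than merely by $T$-invariants of $S^\bullet(\ff_\1^*)$ that may fail to lift to $\fg$-cohomology. Concretely, one must argue that the ideal $\bigcap_{\sigma\in\Sigma_m}\sigma\cdot(g_1,\dots,g_r)$, or its radical, is generated up to radical by $\Sigma_m$-invariant homogeneous elements — a statement about the $\Sigma_m$-action on the graded ring $R$ that follows from standard invariant theory (finiteness of $R$ over $R^{\Sigma_m}$, so that $\sqrt{(g_j)}\cap R^{\Sigma_m}$ has the same zero locus in $\operatorname{Proj}(R^{\Sigma_m})$ as does $\sqrt{\bigcap_\sigma \sigma(g_j)}$ in $\operatorname{Proj}(R)$ under $\rho$). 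A second, more routine, point to be careful about is the parity/$\Z_2$-grading in defining $L_\zeta$ and in the cohomology long exact sequence, but since the paper works throughout with the underlying even category this should not cause real trouble. Once these are in place, the argument is a direct transcription of the finite-group $L_\zeta$ technique.
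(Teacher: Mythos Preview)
Your overall plan---reduce to $N$-invariant cohomology classes, lift to $\HH^\bullet(\fg,\fg_\0;\C)$, build Carlson modules $L_{\zeta_i}$, and tensor---is exactly the paper's. The gap is in your explicit symmetrization step. Replacing $\{g_1,\dots,g_r\}$ by the full $\Sigma_m$-orbit $\{\sigma g_j\}$ and then taking elementary symmetric polynomials in each orbit cuts out $\bigcap_{\sigma}\sigma Z(g_1,\dots,g_r)$, the \emph{intersection} of the translates (as you yourself note, ``the largest $\Sigma_m$-stable subvariety contained in $Z(g_1,\dots,g_r)$''), whereas $\Sigma_m Z(g_1,\dots,g_r)=\bigcup_\sigma \sigma Z(g_1,\dots,g_r)$ is the \emph{union}. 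These differ as soon as $r\ge 2$ or the $g_j$ are not already symmetric. So the module you build from those $L_\zeta$'s has the wrong support.

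The paper closes this gap by a more delicate explicit construction: with $q=m!$, it forms the $\Sigma_m$-stable set $Y$ of all products $(\sigma_1 g_{j_1})\cdots(\sigma_q g_{j_q})$ with the $\sigma_i$ pairwise \emph{distinct}, and checks directly that $Z(Y)=\Sigma_m Z(g_1,\dots,g_r)$ (each such product has every element of $\Sigma_m$ appearing among the $\sigma_i$, so it vanishes at $b=\tau a$ whenever some $g_j(a)=0$; conversely one builds a nonvanishing product at any point outside the union). Only \emph{then} does one take elementary symmetric polynomials across each $\Sigma_m$-orbit in $Y$ to land in $R^N$. Your fallback in the obstacle paragraph---arguing abstractly that the $\Sigma_m$-stable ideal $\bigcap_\sigma \sigma\cdot(g_1,\dots,g_r)$ is, up to radical, generated by $\Sigma_m$-invariants---is a legitimate alternative and does work, provided you run it inside the weight-zero subring $R^T=\C[Z_1,\dots,Z_m]$ (where $\Sigma_m$ acts and $R^T$ is genuinely finite over $(R^T)^{\Sigma_m}=R^N$), not inside $R$ itself with the full $N$-action. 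With that correction, your non-constructive route and the paper's explicit product construction both land at the same place.
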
 

\begin{proof} We show that we can reduce to the case when $g_{1}, \dotsc , g_{r}$ are $N=\Sigma_{m}T$-invariant polynomials.   In order to do so, we first show that $\Sigma_{m}Z(g_{1},g_{2},\dots,g_{r})$ is the zero set for a finite $\Sigma_{m}$-invariant collection of weight zero polynomials.

Set $q=m!$.   Let 
\[
\Gamma = \left\{ \sigma = (\sigma_{1}, \dotsc , \sigma_{q}) \in \Sigma_{m} \times \dotsb \times \Sigma_{m} \mid \sigma_{i} \neq \sigma_{j} \text{ for } 1 \leq i \neq j \leq q \right\}.
\]  Then $\Gamma$ acts diagonally on the set of monomials of degree $q$ in $g_{1}, \dotsc , g_{r}$ by
\[
\sigma.(g_{1}^{m_{1}}g_{2}^{m_{2}}\dots g_{r}^{m_{r}})=(\sigma_{1}g_{1})\dots (\sigma_{m_{1}}g_{1})(\sigma_{m_{1}+1}g_{2})\dots 
(\sigma_{m_{1}+\dots +m_{r-1}+1}g_{r})\dots (\sigma_{q}g_{r}).
\]  

We set 
\[
Y=\left\{ \sigma.(g_{1}^{m_{1}}g_{2}^{m_{2}}\dots g_{r}^{m_{r}}) \mid \sigma \in \Gamma, m_{1}, \dotsc , m_{r} \in \Z_{\geq 0} \text{ such that } \textstyle{\sum_{i}} m_{i} = q \right\}
\] and note that $Y$ is a finite set which is invariant under the action of $\Sigma_{m}$.
Furthermore, we have 
\begin{equation}\label{E:sigmatinvariants}
\Sigma_{m}Z(g_{1}, \dotsc , g_{r})  = Z\left( Y \right).
\end{equation}
To see this, we first let $b \in \ff_{\1}$ be an element of the left hand set.  Then $b = \tau a$ for some $\tau \in \Sigma_{m}$ and $a \in Z(g_{1}, \dotsc , g_{r})$.  Computing 
\begin{align*}
\left( \sigma.(g_{1}^{m_{1}}g_{2}^{m_{2}}\dots g_{r}^{m_{r}})\right)(b) &= \sigma.(g_{1}^{m_{1}}g_{2}^{m_{2}}\dots g_{r}^{m_{r}})(\tau a) \\
   &= (\sigma_{1}g_{1})\dots (\sigma_{q}g_{r})(\tau a) \\
   & = g_{1}(\sigma_{1}^{-1}(\tau a))\dotsb g_{r}(\sigma_{q}^{-1}(\tau a)).
\end{align*}  From the definition of $\Gamma$ we see that there is an index $1 \leq i \leq q$ such that $\sigma_{i} =\tau$ and, hence, this product contains the factor $g_{j}(a)$ for some $1 \leq j \leq r$.   From this we see that $\left(\sigma.(g_{1}^{m_{1}}g_{2}^{m_{2}}\dots g_{r}^{m_{r}}) \right)(b)=0$ and so $b$ is contained in the right hand side.

On the other hand, let $b$ be an element of the right hand set.  If $b$ is not an element of the left hand set, then  $\sigma_{i} b$ is not an element of $V(g_{1}, \dotsc , g_{r})$ for any $\sigma_{i} \in \Sigma_{m}$ and, hence, there exists a polynomial $g_{\sigma_{i}} \in \left\{g_{1}, \dotsc , g_{r} \right\}$  such that $g_{\sigma_{i}}(\sigma_{i} b) \neq  0$.  If we set $\sigma = (\sigma_{1}^{-1}, \dotsc , \sigma_{q}^{-1})$ and consider the monomial $g_{\sigma_{1}}\dotsb g_{\sigma_{q}}$, then 
\[
(\sigma.(g_{\sigma_{1}}\dotsb g_{\sigma_{q}}))(b) \neq 0.
\]  This then contradicts our choice of $b$ since, up to reordering factors, $\sigma.(g_{\sigma_{1}}\dotsb g_{\sigma_{q}}) \in Y$.

We can now reduce to a finite set of $\Sigma_{m}T$-invariant polynomials as follows.  For each $\Sigma_{m}$-orbit in $Y$, say $\left\{y_{1}, \dotsc , y_{s} \right\}$, let $\tilde{y}_{1}, \dotsc , \tilde{y}_{s}$ be the elementary symmetric polynomials in $y_{1}, \dotsc , y_{s}$.  
One can then verify that 
\[
Z(y_{1}, \dotsc , y_{s}) = Z(\tilde{y}_{1}, \dotsc , \tilde{y}_{s}).
\]   If we  let $\tilde{Y}$  be the set of all such elementary symmetric polynomials obtained by ranging over the $\Sigma_{m}$-orbits in $Y$, we then have 
\[
\Sigma_{m}Z(g_{1}, \dotsc , g_{r}) = Z(Y)=Z(\tilde{Y}),
\] where $\tilde{Y}$ is a finite set of $\Sigma_{m}T$-invariant polynomials.

We now show that we can realize this variety using Carlson's $L_{\zeta}$-modules. Say $\tilde{Y}=\left\{q_{1}, \dotsc , q_{u} \right\}$.   From \cite[Theorem 4.1.1]{BKN1} we have a ring isomorphism induced by the restriction map:
\[
\HH^{\bullet}(\fg,\fg_{\0};\C)\cong 
\HH^{\bullet}(\ff,\ff_{\0};\C)^{N}\cong S^{\bullet}(\ff_{\1})^{N}.
\]
Therefore, for each $i=1,2,\dots, u$ there exists $\zeta_{i}\in \HH^{n_{i}}(\fg,\fg_{\0};\C)$ corresponding to $q_{i}$ 
under this isomorphism.  Corresponding to this cohomology class is a $\fg$-module homomorphism $\Omega^{n_{i}}(\C )\rightarrow \C $.  Let $L_{{\zeta}_{i}}$ be the kernel of this map; that is, the Carlson module for $\zeta_{i}$.

When we restrict $L_{\zeta_{i}}$ to $\ff$ it will decompose as an $\ff$-module into the direct sum of the 
Carlson module associated with the cohomology class $q_{i}$ in $\HH^{\bullet}(\ff,\ff_{\0};\C)^{N}\subseteq \HH^{\bullet}(\ff,\ff_{\0};\C)$ and a projective $\ff$-module. Standard arguments (e.g.\ see \cite[Lemma 8.5.1]{BaKn} for the Lie superalgebra version)   show that
$$V^{r}_{\ff_{\1}}(L_{\zeta_{i}})=Z(q_{i}).$$  
Now applying the tensor product property we obtain
\[
\Sigma_{m}Z(g_{1}, \dotsc , g_{r}) = Z\left(Y \right) = Z (\tilde{Y} ) = Z\left(q_{1}, \dotsc , q_{u} \right) = V^{r}_{\ff_{\1}}\left( L_{\zeta_{1}} \otimes \dotsb \otimes L_{\zeta_{u}} \right). 
\]  This proves the desired result.
\end{proof}  


\subsection{}\label{SS:simples}  We now calculate the $\f$-variety for the simple modules of $\gl (m|n)$ and thereby achieve realization for $\Sigma_{m}V(p,p,p)$ for any $p=0, \dotsc , m$.  Before stating the theorem, recall that given a simple $\gl (m|n)$-module $L$ 
one can assign an integer to $L$ from among $0,\dotsc ,m$ called the \emph{atypicality of $L$}.  See, for example, \cite[Section 2.4]{BKN4} for the precise definition.  All we need to know here is that there are simple modules of every possible atypicality.

\begin{theorem}\label{T:simples} If $L$ is a simple $\fg = \gl (m|n)$ module of atypicality $\ell$, then 
\[
V^{r}_{\ff_{\1}}\left(L \right) = \Sigma_{m}V(m-\ell,m-\ell,m-\ell).
\]

\end{theorem}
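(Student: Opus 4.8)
The plan is to compute $V^{r}_{\ff_{\1}}(L)$ purely through the rank-variety description recalled in \cref{SS:varietynotations}: for each $x\in\ff_{\1}$ one decides whether $L$ is projective over $U(\langle x\rangle)$, and then shows that the non-projective locus is $\Sigma_{m}V(m-\ell,m-\ell,m-\ell)$.

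First I would record the local structure of $\ff$. Writing $u_{j}=e_{m-j+1,m+j}$ and $v_{j}=e_{m+j,m-j+1}$ for the given basis of $\ff_{\1}$, the matrix-unit relations give $[u_{i},v_{j}]=\delta_{ij}t_{j}$ with $t_{j}=e_{m-j+1,m-j+1}+e_{m+j,m+j}$, together with $[u_{i},u_{j}]=[v_{i},v_{j}]=0$ and $[\ff_{\0},\ff_{\1}]=0$; hence $\ff=\bigoplus_{j=1}^{m}\ff^{(j)}$ is an orthogonal direct sum of three-dimensional superalgebras $\ff^{(j)}=\C t_{j}\oplus(\C u_{j}\oplus\C v_{j})$, each sitting inside a copy $\gl(1|1)^{(j)}\subseteq\fg$ on the index pair $\{m-j+1,\,m+j\}$. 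For $x\in\ff_{\1}$ put $x^{(j)}=X_{j}(x)u_{j}+Y_{j}(x)v_{j}$ and $S(x)=\{j : x^{(j)}\neq 0\}$; then $\langle x\rangle$ is one-dimensional odd abelian when all $X_{j}(x)Y_{j}(x)=0$ and $\langle x\rangle\cong\fq(1)$ otherwise, while in every case $x^{2}=\tfrac12[x,x]=\sum_{j}X_{j}(x)Y_{j}(x)\,t_{j}$ acts semisimply on the $\fg_{\0}$-semisimple module $L$. A short argument with Clifford algebras (the quotient $U(\fq(1))/(t-\mu)$ is semisimple for $\mu\neq 0$) together with \cref{T:Dade} applied to the zero weight space yields the criterion
\[
L\big|_{\langle x\rangle}\text{ projective}\iff x\text{ acts freely on }L_{0}:=\ker\!\bigl(x^{2}\text{ on }L\bigr)\iff\ker(x|_{L})=x(L_{0}),
\]
so that $[x]\in V^{r}_{\ff_{\1}}(L)$ precisely when $x$ fails to act freely on $L_{0}$.

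Because $L$ is a $G_{\0}$-module, $V^{r}_{\ff_{\1}}(L)$ is $N=\Sigma_{m}T$-invariant, so after conjugating $x$ by a permutation I may assume $S(x)=\{m-p+1,\dots,m\}$ with $p=|S(x)|$, and then $\langle x\rangle\subseteq\ff^{[\,p\,]}:=\ff^{(m-p+1)}\oplus\cdots\oplus\ff^{(m)}$; a direct check (using $m\le n$) shows $\ff^{[\,p\,]}$ is precisely the detecting subalgebra of the corner subalgebra $\gl(p|p)\hookrightarrow\gl(m|n)$ on the index set $\{1,\dots,p\}\cup\{2m-p+1,\dots,2m\}$. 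Taking $\Sigma_{m}$-orbits, the theorem reduces to: (a) if $p\le\ell$ then $x$ does not act freely on $L_{0}$; (b) if $p\ge\ell+1$ then $x$ acts freely on $L_{0}$. Both are handled by restricting $L$ further to $\fa_{S}=\bigoplus_{j\in S}\gl(1|1)^{(j)}$ and studying the two-term complex $(L_{0},\,x|_{L_{0}})$ over this commuting family. The essential input is a Kac-module filtration of $L(\lambda)$ together with the explicit restriction of Kac modules to $\ff$ furnished by the geometric-induction results of \cref{S:GeometricInduction} and \cref{P:KeyFact} (which is exactly why those results are needed here): they let one track the degeneration of the relevant Koszul-type differential $x|_{L}$, and the point is that the number of independent odd directions in which it degenerates on $L$ equals the atypicality $\ell$ of $L(\lambda)$ (the number of atypical coordinate pairs of $\lambda+\rho$). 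A rank count modeled on the Koszul bookkeeping of \cite[\S\S9--10]{BCR:96} then forces $x$ to act freely on $L_{0}$ exactly when $p\ge\ell+1$, establishing (a) and (b).

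The main obstacle is precisely this atypicality count. Since $L|_{\fa_{S}}$ is not a tensor product over its factors $\gl(1|1)^{(j)}$, the K\"unneth-style reasoning available in \cite{BCR:96} does not apply directly, and one must feed in genuinely super-representation-theoretic information: I expect to reduce, via the Kac filtration, to Kac modules --- whose $\ff$-restrictions are controlled by the computations of \cref{S:GeometricInduction} --- and to invoke additivity of atypicality under the corner embedding $\gl(p|p)\hookrightarrow\gl(m|n)$, with the boundary cases $p=\ell$ and $p=\ell+1$ requiring the most care. Verifying that the complex $(L_{0},x|_{L_{0}})$ is exact in exactly the claimed range of $p$ is where the bulk of the work lies.
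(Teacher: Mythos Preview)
Your reduction via $N$-invariance to claims (a) and (b) is correct, and the projectivity criterion over $\langle x\rangle$ using the semisimplicity of $U(\fq(1))/(t-\mu)$ for $\mu\neq 0$ is a reasonable starting point. However, the core of your proposal---establishing (a) and (b) by a direct rank count on $(L_{0},x|_{L_{0}})$---has genuine gaps.

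First, simple $\gl(m|n)$-modules do not in general admit a Kac-module filtration; it is projectives, not simples, that carry Kac flags in this highest weight category. A simple $L(\lambda)$ is merely a quotient of $K^{+}(\lambda)$, and the kernel is typically complicated, so ``reduce via the Kac filtration to Kac modules'' is not an available move. Second, the results of \cref{S:GeometricInduction} and \cref{P:KeyFact} describe the $\fg_{\0}$-structure of parabolically induced modules $H^{i}(G/P,-)$; they do not give an explicit restriction of simples or Kac modules to $\ff$. In the paper's logical order those results are applied \emph{after} the present theorem, which is in fact an input to \cref{L:Kacgenerator}, so your parenthetical ``which is exactly why those results are needed here'' inverts the dependence. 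Third, you concede that the exactness analysis of $(L_{0},x|_{L_{0}})$ ``is where the bulk of the work lies'' and offer no concrete mechanism; the Koszul bookkeeping of \cite{BCR:96} is adapted to modules with a tensor-product decomposition over commuting rank-one pieces, which, as you yourself note, $L|_{\fa_{S}}$ does not have.

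The paper's argument avoids all of this. The inclusion $\Sigma_{m}V(m-\ell,m-\ell,m-\ell)\subseteq V^{r}_{\ff_{\1}}(L)$ is taken from \cite[Theorem 9.2.1]{BKN4}. For the reverse inclusion, given $z\in V^{r}_{\ff_{\1}}(L)$, after a $\Sigma_{m}$-normalization one acts by a one-parameter subgroup of $T$ together with a scalar, using that $V^{r}_{\ff_{\1}}(L)$ is closed, conical, and $N$-stable, to pass to a limit $z'\in V^{r}_{\ff_{\1}}(L)$ with $[z',z']=0$. Then $z'$ lies in the Duflo--Serganova associated variety of $L$, and \cite[Theorem 5.3]{dufloserganova} bounds its rank by $\ell$; this forces at least $m-\ell$ matching coordinate pairs of the original $z$ to vanish. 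The idea you are missing is precisely this torus degeneration to the self-commuting cone, which replaces a hard direct computation on $L$ by a citation.
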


\begin{proof}  From the proof of \cite[Theorem 9.2.1]{BKN4} we have that $V(m-\ell,m-\ell,m-\ell)$ is a subset of $V^{r}_{\ff_{\1}}\left(L \right)$.  Since $V^{r}_{\ff_{\1}}(L)$ is stable under the action of $\Sigma_{m}$ it follows that $\Sigma_{m}V(m-\ell,m-\ell,m-\ell) \subseteq V^{r}_{\ff_{\1}}(L)$.   If $\ell=0$ then $L$ is projective and both sides are the zero variety.  If $\ell=m$, then both sides are equal to $\ff_{\1}$.  In either case the theorem holds and so for the reverse containment it suffices to consider the case when $0 < \ell < m$.

We now consider the reverse containment. Let  
\[
z=\sum_{j=1}^{m} x_{j}e_{m-j+1,m+j} + y_{j} e_{m+j,m-j+1} \in\V^{r}_{\ff_{\1}}\left(L \right).
\] We claim that $z \in \Sigma_{m}V(m-\ell,m-\ell,m-\ell)$.  Using the action of $\Sigma_{m}$ we may assume without loss that there is $1 \leq u \leq v \leq w \leq m$ such that $x_{i}\neq 0$ and $y_{i}\neq 0$ for $i=1, \dotsc , u$, $x_{i}=0$ and $y_{i}\neq 0$ for $i=u+1, \dotsc , v$, $x_{i}\neq 0$ and $y_{i}= 0$ for $i=v+1, \dotsc , w$, and $x_{i}=0$ and $y_{i}= 0$ for $i=w+1, \dotsc , m$.  In particular, $z$ has $m-w$ matching pairs of coordinates which are zero. 

For any fixed non-zero $a \in \R$ let $g_{a} \in G_{\0}$ be the group element given by the diagonal matrix 
\[
g_{a} = \operatorname{Diag}\left(d_{1}, \dotsc , d_{m+n} \right)
\] with 
\[
d_{m-i+1} = \begin{cases} a^{-1}, & i=1, \dotsc , u; \\
                              a, & i=u+1, \dotsc , v; \\
                              a^{-1}, & i=v+1, \dotsc , w; \\
                              1, & i=w+1, \dotsc , m; \\
                              1, & i=1-n, \dotsc , 0; 
\end{cases}. 
\] Since $V^{r}_{\ff_{\1}}\left(L \right)$ is a conical $N$-stable variety we have $ag_{a}zg_{a}^{-1} \in V^{r}_{\ff_{\1}}\left(L \right)$.  That is,
\[
 \sum_{j=1}^{u} \left( x_{j}e_{m-j+1,m+j} + a^{2}y_{j} e_{m+j,m-j+1} \right)+ \sum_{j=u+1}^{v} y_{j} e_{m+j,m-j+1} + \sum_{j=v+1}^{w}x_{j}e_{m-j+1,m+j} \in V^{r}_{\ff_{\1}}\left(L \right).
\]  Iterating, we have 
\[
\sum_{j=1}^{u} \left(x_{j}e_{m-j+1,m+j} + a^{2t}y_{j} e_{m+j,m-j+1} \right)+ \sum_{j=u+1}^{v} y_{j} e_{m+j,m-j+1} + \sum_{j=v+1}^{w}x_{j}e_{m-j+1,m+j}  \in V^{r}_{\ff_{\1}}\left(L \right)
\] for all integers $t>0$.  By taking $0 < a < 1$ and using that $V^{r}_{\ff_{\1}}\left(L \right)$ is a closed variety we see that in the limit we have 
\[
z':=\sum_{j=1}^{u}x_{j}e_{m-j+1,m+j} + \sum_{j=u+1}^{v} y_{j} e_{m+j,m-j+1} + \sum_{j=v+1}^{w}x_{j}e_{m-j+1,m+j} \in V^{r}_{\ff_{\1}}\left(L \right).
\]

However, $[z',z']=0$ and so $z'$ lies in the associated variety for $L$ defined by Duflo-Serganova in \cite{dufloserganova}.  But \cite[Theorem 5.3]{dufloserganova} then implies that $w \leq \ell$.  That is, $z$ has $m-w \geq m-\ell$ matching pairs of coordinates which are zero and, hence, $z \in \Sigma_{m}V(m-\ell,m-\ell,m-\ell)$.  
\end{proof}

\subsection{Lower Bound for \texorpdfstring{$V^{r}_{\ff_{\1}}(H^{0}(G/P, L_{\fp}(\lambda)^{*})^{*})$} {V\textasciicircum r\_{(f\_1)}(H\textasciicircum 0 (G/P, L\_p(lambda)*)*)}} Let $\fg =\mathfrak{gl}(m|n)$ with $m \leq n$, and fix $n-m \leq k \leq n$. 
As in \cref{SS:BorelWeilBott}, let $P=P_{k}$ be the parabolic subgroup of block upper triangular matrices which has Levi subgroup isomorphic to $\operatorname{GL}(m|n-k) \times \operatorname{GL}(k)$.  In this section we determine a lower bound for the $\ff_{\1}$-rank variety of  $H^{0}(G/P, L_{\fp}(\lambda)^{*})^{*}$ when  $\lambda \in X_{0}^{+}$ satisfies certain additional conditions. 

It is convenient to define certain subspaces of $\fg$.  Set 
\[
I = \left\{1, \dotsc , m+k-n, m+n-k+1, \dotsc , m+n  \right\}.
\]
  Let $\fa$ be the subspace spanned by the matrix units $e_{i,j}$ with $i,j \in I$.  Let $\fb$ be the subspace spanned by 
the matrix units $e_{i,j}$ with at least one of $i$ or $j$ not in $I$.  Then 
\begin{equation}\label{E:abdecomp}
\fg  = \fa \oplus \fb 
\end{equation}
as vector spaces and $\fa$ is the Lie subsuperalgebra isomorphic to $\gl(m+k-n|k)$ in the ``corners'' of $\fg$.  Note that our assumption that $n-m \leq k \leq n$ ensures that $0 \leq m+k-n \leq m$.  


Recall the parabolic $\Z$-grading defined in Section~\ref{SS:gradings} and that if $\gamma$ is an integral weight, then we write $\deg(\gamma)$ for the degree of $\gamma$ in the parabolic grading.   For example, 
in the parabolic grading the module $H^{0}(G/P, L_{\fp}(\lambda)^{*})^{*}$ has $\deg (\lambda)$ as its highest non-zero degree.  From this it follows that $\fa \cap \fg_{1} \subseteq \fp_{1}$ acts trivially on $[H^{0}(G/P, L_{\fp}(\lambda)^{*})^{*}]_{\deg(\lambda)} $.

Also, note that when $x \in \ff_{\1}\cap \fp$ is written as a sum according to the parabolic grading we have 
\[
x = x_{0}+x_{1},
\]   with $x_{0}$ and $x_{1}$ have degrees $0$ and $1$, respectively.

\begin{lemma}\label{L:Kacgenerator}  Let  $n-m \leq k \leq n$ and $\lambda \in X_{0}^{+}$ such that $\lambda_{i}=-\lambda_{2m+1-i}$ for $i=1,\dotsc , m$.   Let $x \in \ff_{\1} \cap \fp$.  Then 
\[
[H^{0}(G/P, L_{\fp}(\lambda)^{*})^{*}]_{\deg(\lambda)}  \cong Q \oplus U'
\] where the decomposition is as $\fa_{\0}$- and $\langle x_{0} \rangle$-modules. 
 
 Furthermore, 
\[
Q \cong L_{\mathfrak{gl}(m+k-n)}(\lambda_{1}, \dotsc , \lambda_{m+k-n}) \boxtimes L_{\mathfrak{gl}(k)}(\lambda_{m+n-k+1}, \dotsc , \lambda_{m+n})
\]
as an $\fa_{\0} \cong \gl (m+k-n) \oplus \gl(k)$-module and $Q$ is a direct sum of trivial modules for $\langle x_{0} \rangle$.

\end{lemma}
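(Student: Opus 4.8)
The plan is to first pin down $M_{\deg(\lambda)}$, where $M:=H^{0}(G/P,L_{\fp}(\lambda)^{*})^{*}$, as an $\fl$-module, and then decompose it using the fact that $\fa_{\0}$ commutes with $x_{0}$. By \eqref{E:parabolicquotientmap} there is a surjection of $\fg$-modules $U(\fg)\otimes_{U(\fp)}L_{\fp}(\lambda)\twoheadrightarrow M$ respecting the parabolic grading; by the PBW theorem the parabolic Verma module is generated by its top graded piece $1\otimes L_{\fp}(\lambda)$, which is simple as an $\fl$-module (recall $L_{\fp}(\lambda)=L_{\fl}(\lambda)$ from the proof of \cref{L:BreakitDown}). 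Hence $M$ is generated in degree $\deg(\lambda)$, so either $M=0$ (and the lemma is vacuous) or the induced surjection $L_{\fl}(\lambda)\twoheadrightarrow M_{\deg(\lambda)}$ of simple $\fl$-modules is an isomorphism; assume the latter, so $M_{\deg(\lambda)}\cong L_{\fl}(\lambda)$ as $\fl$-modules, hence also as $\fa_{\0}$-modules ($\fa_{\0}\subseteq\fl_{\0}$) and as $\langle x_{0}\rangle$-modules. For the latter, note from the parabolic grading of \cref{SS:gradings} that the basis vectors $e_{m-j+1,m+j},e_{m+j,m-j+1}$ of $\ff_{\1}$ lie in $\fl$ exactly for $j\leq n-k$, and for $j>n-k$ the element $e_{m+j,m-j+1}$ lies in $\fu^{-}$; since $x\in\ff_{\1}\cap\fp$ this forces $x_{0}\in\fl_{\1}$ while $x_{1}=x-x_{0}$ lies in $\fa\cap\fg_{1}\subseteq\fp_{1}$ and so acts as $0$ on $M_{\deg(\lambda)}$.

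\textbf{The commutation and the decomposition.} Next I would record that the matrix units occurring in $x_{0}$ involve only the indices $\{m+k-n+1,\dots,m\}\cup\{m+1,\dots,m+n-k\}$, which are disjoint from $\{1,\dots,m+k-n\}\cup\{m+n-k+1,\dots,m+n\}$, the support of $\fa_{\0}\cong\gl(m+k-n)\oplus\gl(k)$; hence $[\fa_{\0},x_{0}]=0$ and $M_{\deg(\lambda)}$ is a module for the Lie superalgebra $\fa_{\0}\oplus\langle x_{0}\rangle$. As $M$ integrates to $G_{\0}$, the restriction to the reductive $\fa_{\0}$ is completely reducible, so $M_{\deg(\lambda)}=\bigoplus_{\chi}L_{\fa_{\0}}(\chi)\otimes\operatorname{Mult}_{\chi}$ with each $\operatorname{Mult}_{\chi}$ a $\langle x_{0}\rangle$-module. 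Just as in the proof of \cref{T:Dade} and the propositions preceding it, $\langle x_{0}\rangle$ is isomorphic to $\fq(1)$ or to a $1$-dimensional odd abelian Lie superalgebra, and by \cite[Corollary 4.8]{Aus} and \cite[Proposition 5.2]{BKN2} every finite-dimensional $\langle x_{0}\rangle$-module is a direct sum of a projective module and a trivial module. Writing $\operatorname{Mult}_{\chi}=P_{\chi}\oplus T_{\chi}$ accordingly and setting $Q=\bigoplus_{\chi}L_{\fa_{\0}}(\chi)\otimes T_{\chi}$ and $U=\bigoplus_{\chi}L_{\fa_{\0}}(\chi)\otimes P_{\chi}$ gives $M_{\deg(\lambda)}=Q\oplus U$ as $\fa_{\0}$- and $\langle x_{0}\rangle$-modules, with $Q$ a direct sum of trivial $\langle x_{0}\rangle$-modules and $U$ projective over $\langle x_{0}\rangle$.

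\textbf{Identifying $Q$.} What remains, and what I expect to be the main obstacle, is to identify $Q$ as an $\fa_{\0}$-module. Using $\fl=\gl(m|n-k)\oplus\gl(k)$ and $L_{\fl}(\lambda)=L_{\gl(m|n-k)}(\mu)\boxtimes L_{\gl(k)}(\nu)$, with $\mu=(\lambda_{1},\dots,\lambda_{m+n-k})$ and $\nu=(\lambda_{m+n-k+1},\dots,\lambda_{m+n})$, the $\gl(k)$-factor of $\fa_{\0}$ acts irreducibly on the second tensor factor and trivially on the first, while $\gl(m+k-n)$ and $x_{0}$ act only on the first factor, through the commuting subalgebras $\gl(m+k-n)$ and $\mathfrak{m}:=\gl(n-k|n-k)$ of $\gl(m|n-k)$. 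This reduces the problem to showing that the $\langle x_{0}\rangle$-trivial part of $L_{\gl(m|n-k)}(\mu)$ is, as a $\gl(m+k-n)$-module, isomorphic to $L_{\gl(m+k-n)}(\lambda_{1},\dots,\lambda_{m+k-n})$; equivalently, decomposing $L_{\gl(m|n-k)}(\mu)=\bigoplus_{\chi'}L_{\gl(m+k-n)}(\chi')\otimes M_{\chi'}$ along the reductive factor, that $M_{\chi'}$ has no nonzero $\langle x_{0}\rangle$-trivial summand for $\chi'\neq(\lambda_{1},\dots,\lambda_{m+k-n})$, and that the $\langle x_{0}\rangle$-trivial part of $M_{(\lambda_{1},\dots,\lambda_{m+k-n})}$ is one-dimensional.

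\textbf{The delicate step.} The hypothesis $\lambda_{i}=-\lambda_{2m+1-i}$ is what makes this tractable: it forces the $\mathfrak{m}$-weight carried by the highest weight vector $v_{\lambda}$ to be anti-palindromic, hence maximally atypical for $\gl(n-k|n-k)$. To carry out the computation I would control the $\gl(m+k-n)$-multiplicity spaces via the PBW/Kac filtration of $L_{\gl(m|n-k)}(\mu)$ together with Pieri's rule (exactly as in the proofs of \cref{L:BreakitDown} and \cref{L:TensorDecomp}), use the coarse super Borel-Weil-Bott statement \cref{T:SuperBWB} for the dominance range in which the lemma is applied, and invoke the $\ff_{\1}$-rank variety computation for maximally atypical simple modules from the proof of \cref{T:simples}: the latter forces the $\langle x_{0}\rangle$-trivial part to be nonzero, while the weight and dimension bookkeeping then pins down its size and its $\gl(m+k-n)$-structure. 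Verifying that these inputs combine to give precisely the claimed $Q$, and in particular that nothing beyond the top $\gl(m+k-n)$-isotypic piece survives in the $\langle x_{0}\rangle$-trivial part, is the delicate point on which the argument hinges.
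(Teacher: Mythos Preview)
Your setup is correct: $M_{\deg(\lambda)}\cong L_{\fl}(\lambda)$, the element $x_{0}$ lies in $\fl_{\1}$ and commutes with $\fa_{\0}$, and your general isotypic decomposition with $\langle x_{0}\rangle$ acting on multiplicity spaces is valid. The problem is the choice you make for $Q$. You take $Q$ to be the \emph{entire} $\langle x_{0}\rangle$-trivial part of $M_{\deg(\lambda)}$ and then try to prove that this trivial part, as an $\fa_{\0}$-module, is exactly $L_{\gl(m+k-n)}(\lambda_{1},\dots,\lambda_{m+k-n})\boxtimes L_{\gl(k)}(\lambda_{m+n-k+1},\dots,\lambda_{m+n})$. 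You do not prove this, you flag it as the ``delicate point,'' and in fact there is no reason to expect it to be true: the $\langle x_{0}\rangle$-trivial part of the middle factor $L_{\gl(n-k|n-k)}$ need not be one-dimensional, and $L_{\fl}(\lambda)$ has many $\fa_{\0}$-isotypic components beyond the top one, any of which may contribute trivial $\langle x_{0}\rangle$-summands.

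The lemma does not ask for $Q$ to be the full trivial part; it only asks for \emph{some} $\fa_{\0}$- and $\langle x_{0}\rangle$-stable direct summand with the stated properties. The paper exploits this freedom and thereby sidesteps your delicate step entirely. It refines the parabolic grading to a $\Z^{3}$-grading via $h=\operatorname{diag}(a,\dots,a,b,\dots,b,c,\dots,c)$ with $m+k-n$ copies of $a$, $2(n-k)$ of $b$, and $k$ of $c$, so that $\fg_{(0,0,0)}\cong \gl(m+k-n)\oplus\gl(n-k|n-k)\oplus\gl(k)$. The top $\Z^{3}$-graded piece of $M$ is then the simple $\fg_{(0,0,0)}$-module
\[
L_{\gl(m+k-n)}(\lambda_{1},\dots,\lambda_{m+k-n})\boxtimes L_{\gl(n-k|n-k)}(\lambda_{m+k-n+1},\dots,\lambda_{m+n-k})\boxtimes L_{\gl(k)}(\lambda_{m+n-k+1},\dots,\lambda_{m+n}).
\]
Now $x_{0}$ lies in the detecting subalgebra of the middle $\gl(n-k|n-k)$, and the hypothesis $\lambda_{i}=-\lambda_{2m+1-i}$ makes the middle factor maximally atypical, so by \cref{T:simples} it is not $\langle x_{0}\rangle$-projective and hence contains a one-dimensional trivial $\langle x_{0}\rangle$-summand $\C v_{0}$. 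One then simply sets $Q=L_{\gl(m+k-n)}\boxtimes\C v_{0}\boxtimes L_{\gl(k)}$ and lets $U$ be everything else (the complement in the top $\Z^{3}$-piece together with all other $\Z^{3}$-pieces of $M_{\deg(\lambda)}$). Since both $\fa_{\0}$ and $x_{0}$ have $\Z^{3}$-degree $(0,0,0)$, this is an $\fa_{\0}$- and $\langle x_{0}\rangle$-module decomposition, and the $\fa_{\0}$-structure of $Q$ is immediate from the tensor factorization. No bookkeeping of multiplicity spaces is needed.
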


\begin{proof} We first introduce a refinement of the parabolic grading.
Let $a$, $b$, and $c$ be non-negative real numbers and set
\[
h = \operatorname{diag}(a, \dotsc , a, b, \dotsc , b, c, \dotsc ,c) \in T
\]
be the diagonal matrix with $(m+k-n)$ $a$'s, $(2n-2k)$ $b$'s, and $k$ $c$'s.  The action of $h$ on a weight module for $T$ provides a $\Z \times \Z \times \Z$-grading. Namely, given $\gamma = \sum_{i=1}^{m+n}\gamma_{i}\varepsilon_{i}$, if we write $(\deg_{a}(\gamma), \deg_{b}(\gamma), \deg_{c}(\gamma))$ for the degree of an integral weight $\gamma$ with respect to this grading, then $\deg_{a}(\gamma) =\sum_{i=1}^{m+k-n} \gamma_{i}$, $\deg_{b}(\gamma) = \sum_{i=m+k-n+1}^{m+n-k}\gamma_{i}$, and $\deg_{c}(\gamma) = \sum_{i=m+n-k+1}^{m+n}\gamma_{i}$.  Note that by setting $a=b=c^{-1}$ we recover the parabolic grading and that for any weight module $M$ and integral weight $\gamma$ we have $M_{(\deg_{a}(\gamma), \deg_{b}(\gamma), \deg_{c}(\gamma))} \subseteq M_{\deg(\gamma)}$ where $\deg (\gamma) = \deg_{a}(\gamma) + \deg_{b}(\gamma) - \deg_{c}(\gamma)$.

Using this grading (for suitably generic $a, b, c$) we see that under the adjoint action
\[
\fg_{(0,0,0)} \cong \gl(m+k-n) \oplus \gl(n-k|n-k) \oplus \gl(k). 
\] Let $M=H^{0}(G/P, L_{\fp}(\lambda)^{*})^{*}$.  Recall from \eqref{E:parabolicquotientmap} that $M$ is a quotient of the parabolic Verma module.  From this we have that, in the parabolic grading, the degree $0$ part of $M$ is isomorphic to $L_{\fp}(\lambda)$.  This along with a calculation using the $\Z \times \Z \times \Z$-grading shows that
\begin{multline}\label{E:tensor}
M_{(\deg_{a}(\lambda),  \deg_{b}(\lambda), \deg_{c}(\lambda))}   
      \cong L_{\mathfrak{gl}(m+k-n)}(\lambda_{1}, \dotsc , \lambda_{m+k-n}) \boxtimes \\L_{\mathfrak{gl}(n-k|n-k)}(\lambda_{m+k-n+1}, \dotsc , \lambda_{m+n-k}) \boxtimes L_{\mathfrak{gl}(k)}(\lambda_{m+n-k+1}, \dotsc , \lambda_{m+n})
\end{multline}
as $\fg_{(0,0,0)}$-modules.

Observe that $x_{0}$ is an element of the detecting subalgebra of $\mathfrak{gl}(n-k|n-k) \subseteq \fg_{(0,0,0)}$.    Our assumption on $\lambda$ implies that $ L_{\mathfrak{gl}(n-k|n-k)}(\lambda_{m+k-n+1}, \dotsc , \lambda_{m+n-k})$ is a simple $\mathfrak{gl}(n-k|n-k)$-module with atypicality $n-k$ (the maximum possible).  Theorem~\ref{T:simples} then implies that $ L_{\mathfrak{gl}(n-k|n-k)}(\lambda_{m+k-n+1}, \dotsc , \lambda_{m+n-k})$ is not projective as a $U(\langle x_{0} \rangle)$-module.  That is, we can find a non-zero vector 
\[
v_{0} \in  L_{\mathfrak{gl}(n-k|n-k)}(\lambda_{m+k-n+1}, \dotsc , \lambda_{m+n-k})
\]
such that $v_{0}$ spans an $\langle x_{0} \rangle$-trivial direct summand and
\begin{equation}\label{E:splitter}
L_{\mathfrak{gl}(n-k|n-k)}(\lambda_{m+k-n+1}, \dotsc , \lambda_{m+n-k}) = \C v_{0} \oplus J
\end{equation}
as $\langle x_{0} \rangle$-modules.  

Set 
\begin{align*}
A &= L_{\mathfrak{gl}(m+k-n)}(\lambda_{1}, \dotsc , \lambda_{m+k-n}) \boxtimes \C v_{0} \boxtimes L_{\mathfrak{gl}(k)}(\lambda_{m+n-k+1}, \dotsc , \lambda_{m+n}) \\
B &= L_{\mathfrak{gl}(m+k-n)}(\lambda_{1}, \dotsc , \lambda_{m+k-n}) \boxtimes J \boxtimes L_{\mathfrak{gl}(k)}(\lambda_{m+n-k+1}, \dotsc , \lambda_{m+n})
\end{align*}
Combining \eqref{E:splitter} with \eqref{E:tensor} we obtain
\begin{align}\label{E:breaker}
M_{(\deg_{a}(\lambda), \deg_{b}(\lambda), \deg_{c}(\lambda))} \cong A \oplus B
\end{align} as $\fa_{\0}$ and $\langle x_{0} \rangle$-modules.  Now set $Q$ equal to $A$ and $U'$ equal to the sum of $B$ and all trigraded components of $M_{\deg (\lambda)}$ excluding $M_{(\deg_{a}(\lambda), \deg_{b}(\lambda), \deg_{c}(\lambda))}$.  Then $M_{\deg (\lambda)} \cong Q\oplus U'$ as vector spaces.  The fact that $x_{0}$ and $\fa_{\0}$ are of degree $(0,0,0)$ along with our choice of $A$ and $B$ makes it straightforward to see that this decomposition has the asserted properties.  
\end{proof}

Let $\fu^{-}$ be the opposite nilradical for our fixed parabolic subalgebra $\fp$. Set $\fa^{-} = \fa \cap \fu^{-}$ and $\fb^{-}= \fb \cap  \fu^{-}$.  In the PBW basis for $U(\fu^{-})$ given by all monomials in the matrix units, set $S$ to be the subspace of $U(\fu^{-})$ spanned by all monomials containing at least one matrix unit from $\fb^{-}$. Since $\fu^{-} = \fa^{-} \oplus \fb^{-}$ the PBW theorem for Lie superalgebras implies that the following decomposition holds as vector spaces:
\begin{equation}\label{E:symmetricdecomposition}
U(\fu^{-}) \cong U(\fa^{-}) \oplus S.
\end{equation}
We can now provide a lower bound for the $\ff_{\1}$-rank variety of $H^{0}(G/P, L_{\fp}(\lambda)^{*})^{*}$.  


\begin{theorem}\label{T:Nprimesupport}  Let  $n-m \leq k \leq n$ and let $\lambda \in X_{0}^{+}$ be a dominant integral weight such that $\lambda_{i}=-\lambda_{2m+1-i}$ for $i=1,\dotsc , m$.  Furthermore, assume $\lambda_{i}-\lambda_{i+1}>(m+k-n)k+mn$ for $i=1, \dotsc , m-1, m+1, \dotsc , m+n$.  Then 
\begin{equation}
\ff_{\1} \cap  \fp  \subseteq V^{r}_{\ff_{\1}}\left( H^{0}(G/P, L_{\fp}(\lambda)^{*})^{*}\right).
\end{equation}
\end{theorem}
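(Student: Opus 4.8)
The plan is to unwind the definition of $V^{r}_{\ff_{\1}}$ and prove directly that, for every nonzero $x \in \ff_{\1} \cap \fp$, the module $M := H^{0}(G/P, L_{\fp}(\lambda)^{*})^{*}$ is \emph{not} projective as a $U(\langle x \rangle)$-module. I would begin by decomposing $x = x_{0} + x_{1}$ according to the parabolic grading of Section~\ref{SS:gradings}, with $x_{0}$ of degree $0$ and $x_{1}$ of degree $1$. A direct inspection of the matrix-unit basis of $\ff_{\1}$ shows that $x_{0}$ collects the coordinates of $x$ lying in the Levi $\fl$ and $x_{1}$ those lying in $\fu^{+}$; moreover $x_{0} \in \fl_{\1}$ lands in the detecting subalgebra of the $\gl(n-k|n-k)$-corner of $\fl$ used in \cref{L:Kacgenerator}, while $x_{1} \in \fa \cap \fg_{1} \subseteq \fp_{1}$, with $\fa$ as in \eqref{E:abdecomp}. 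Since $x_{1}$ strictly raises the parabolic degree, $x$ acts on the top graded piece $[M]_{\deg(\lambda)}$ through $x_{0}$ alone.

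Next I would apply \cref{L:Kacgenerator}: the hypothesis $\lambda_{i} = -\lambda_{2m+1-i}$ puts us in its setting, so $[M]_{\deg(\lambda)} \cong Q \oplus U$ as $\fa_{\0}$- and $\langle x_{0} \rangle$-modules with $Q \ne 0$ and $\langle x_{0} \rangle$ acting trivially on $Q$. Because $[M]_{\deg(\lambda)}$ is a $\langle x \rangle$-submodule of $M$ on which $x$ acts as $x_{0}$, $Q$ is a nonzero $\langle x \rangle$-submodule of $M$ killed by $x$; when $\langle x \rangle \cong \fq(1)$ one also has $[x,x] = 2x_{0}^{2}$ (using $x_{1}^{2}=0$ and $[x_{0},x_{1}]=0$), so $[x,x]$ annihilates $Q$ as well, and $Q$ is a trivial $\langle x \rangle$-module in every case. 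As recalled around \cref{T:Dade}, a module in $\CC_{(\langle x \rangle , \langle x \rangle_{\0})}$ is projective precisely when its $\langle x \rangle_{\0}$-weight-zero space is free over the corresponding Clifford algebra; so $M$ will fail to be projective as soon as $Q \not\subseteq xM$. If $x_{1} = 0$ this is immediate: $x_{0}$ preserves the grading, $x_{0}Q = 0$, and $x_{0}[M]_{\deg(\lambda)} \subseteq U$, whence $Q \cap xM = 0$.

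The substantive case is $x_{1} \ne 0$. Sorting by parabolic degree, an element of $Q$ (living in degree $\deg(\lambda)$) that belongs to $xM$ must equal $x_{0}w + x_{1}w'$ with $w \in [M]_{\deg(\lambda)}$ and $w' \in [M]_{\deg(\lambda)-1}$; as $x_{0}Q = 0$ and $x_{0}$ preserves the refined $\Z^{3}$-grading from the proof of \cref{L:Kacgenerator}, we get $x_{0}w \in U$, so modulo $U$ the set $Q \cap xM$ is contained in the image of $x_{1}\colon [M]_{\deg(\lambda)-1} \to [M]_{\deg(\lambda)}$. To compute this image I would invoke the hypothesis $\lambda_{i}-\lambda_{i+1} > (m+k-n)k + mn$, which allows \cref{T:SuperBWB,T:grading} to be applied with $d = (m+k-n)k$: this gives $H^{j}(G/P,L_{\fp}(\lambda)^{*}) = 0$ for $j>0$ and, as $\fl$-modules, $[M]_{\deg(\lambda)-t} \cong S^{t}_{\text{super}}(\fg/\fp) \otimes L_{\fp}(\lambda)$ for $t = 0,1$. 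Under this identification together with the surjection \eqref{E:parabolicquotientmap}, and using that $x_{1} \in \fu^{+}$ kills $L_{\fp}(\lambda)$, the operator $x_{1}$ acts on $[M]_{\deg(\lambda)-1} \cong \fu^{-} \otimes L_{\fp}(\lambda)$ by $Y \otimes \ell \mapsto [x_{1},Y]\cdot \ell$; hence $\pr_{Q}(xM) \subseteq \pr_{Q}\bigl([x_{1},\fu^{-}]\cdot L_{\fp}(\lambda)\bigr)$.

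The remaining step — and, I expect, the main obstacle — is to show this last subspace is proper in $Q$. This calls for bookkeeping: determine explicitly which matrix units appear in $[x_{1},\fu^{-}] \subseteq \fl$, locate $Q$ inside $L_{\fp}(\lambda) = L_{\gl(m|n-k)}(\lambda_{1},\dots,\lambda_{m+n-k}) \boxtimes L_{\gl(k)}(\lambda_{m+n-k+1},\dots,\lambda_{m+n})$ via the description in \cref{L:Kacgenerator}, and exhibit an extreme $T$-weight vector of $Q$ that no element of $[x_{1},\fu^{-}]$ can produce. Granting this, $Q \not\subseteq xM$, so $M$ is not projective over $U(\langle x \rangle)$ and $x \in V^{r}_{\ff_{\1}}(M)$; since $x \in \ff_{\1}\cap\fp$ was arbitrary, $\ff_{\1}\cap\fp \subseteq V^{r}_{\ff_{\1}}(M)$. (Because $V^{r}_{\ff_{\1}}(M)$ is closed, conical and $N$-stable, one could instead reduce to a dense subset of $\ff_{\1}\cap\fp$, but treating each $x$ in turn seems cleanest.)
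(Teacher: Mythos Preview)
Your setup through the third paragraph is sound: the decomposition $x=x_{0}+x_{1}$, the verification that $x_{1}^{2}=0$ and $[x_{0},x_{1}]=0$, the use of \cref{L:Kacgenerator}, and the observation that $Q\subseteq\ker x$ with $Q$ lying in the zero weight space for $t=x^{2}$ are all correct, and the criterion ``$Q\not\subseteq xM\Rightarrow M$ not projective over $U(\langle x\rangle)$'' is valid. Your reduction of the question to whether $\pr_{Q}\bigl([x_{1},\fu^{-}]\!\cdot\!L_{\fp}(\lambda)\bigr)\subsetneq Q$ is also legitimate, using \cref{T:grading} in degrees $\deg(\lambda)$ and $\deg(\lambda)-1$.

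The gap is exactly where you flag it, and it is genuine: you do not prove that $\pr_{Q}\bigl([x_{1},\fu^{-}]\!\cdot\!L_{\fp}(\lambda)\bigr)\neq Q$, and the ``extreme weight vector'' strategy is only a sketch. For a single matrix unit $x_{1}=e_{j,2m+1-j}$ one can indeed locate such a vector (essentially a lowest-weight vector, using that $\lambda_{i}=-\lambda_{2m+1-i}$ forces the relevant Cartan combination to vanish there), but for a general $x_{1}$ that is a linear combination of several such matrix units the subspace $[x_{1},\fu^{-}]$ grows and the bookkeeping becomes delicate; it is not obvious that a single weight argument handles all $x$ uniformly.

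The paper avoids this obstacle by working not with $Q$ alone but with the full $\fa$-submodule it generates, where $\fa\cong\gl(m+k-n\,|\,k)$ is the ``corner'' subalgebra. Using \cref{T:grading} with $d=(m+k-n)k=\dim\fa_{-1}$ one identifies $U(\fa)Q$ with the Kac module $K^{+}_{\gl(m+k-n|k)}(\gamma)$ of maximal atypicality, and shows that $M=U(\fa)Q\oplus U(\fa)U\oplus S\,M_{\deg(\lambda)}$ is an $\langle x\rangle$-module decomposition (because $[x_{i},\fa]\subseteq\fa$ and $[x_{i},\fb]\subseteq\fb$). Since $x_{1}$ lies in the detecting subalgebra of $\fa$, the known result on supports of Kac modules (\cite[Theorem 6.2.1]{BKN4}) gives a trivial $\langle x_{1}\rangle$-summand inside $U(\fa)Q$; because $x_{0}$ commutes with $\fa$ and kills $Q$, it acts trivially on all of $U(\fa)Q$, so this summand is trivial for $\langle x\rangle$. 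The point is that the trivial summand is found not in the top degree $Q$ but somewhere in the Kac module $U(\fa)Q$, and the existence argument is packaged into the cited result rather than redone by hand for each $x$. Your approach trades this structural step for an explicit weight computation at the top two degrees; if you want to complete it, you will effectively be reproving a special case of the Kac-module result.
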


\begin{proof}  Let $x \in \ff_{\1} \cap \fp$.  From the rank variety description it suffices to show that $M:=H^{0}(G/P, L_{\fp}(\lambda)^{*})^{*}$ is not projective as a $U(\langle x \rangle)$-module.  By the previous lemma we may write 
\[
 M_{\deg (\lambda)} \cong  Q \oplus U.
\]

Consider $U(\fa )Q \subseteq M$.  With respect to $\fa$, $U(\fa)Q$ is generated by a highest weight vector of  weight $\gamma:=(\lambda_{1}, \dotsc , \lambda_{m+k-n} | \lambda_{m+n-k+1}, \dotsc , \lambda_{m+n} )$ and 
there exists a surjective $\fa$-homomorphism from the Kac module $K^{+}_{\mathfrak{gl}(m+k-n|k)}(\gamma)$ onto $U(\fa)Q$.  The key observation is that Theorem~\ref{T:grading} (with $d=(m+k-n)k$) implies that this map is also injective.  Thus 
\[
U(\fa)Q \cong K^{+}_{\mathfrak{gl}(m+k-n|k)}(\gamma)
\]
as $\fa$-modules.  Furthermore, our assumption on $\lambda$ ensures that $\gamma$ has atypicality $m+k-n$ (the maximum possible) for $\fa$.

Since $M = U(\fu^{-})M_{\deg (\lambda)}$, we can apply \cref{T:grading}, \cref{L:Kacgenerator}, and \cref{E:symmetricdecomposition} to see that we have a vector space decomposition: 
\begin{equation}\label{E:breakdown}
M = U(\fa)Q \oplus U(\fa )U \oplus \left( SM_{\deg (\lambda)} \right).
\end{equation}

We now consider this decomposition under the action of $x_{0}$ and $x_{1}$.  A direct calculation shows that for $i=0,1$ we have $[x_{i}, \fa ] \subseteq \fa$ and $[x_{i}, \fb ] \subseteq \fb$.  This plus the fact that $Q$, $U$, $M_{\deg (\lambda)}$ are $\langle x_{i} \rangle$-modules implies this is a decomposition into $\langle x_{i} \rangle$-modules and, hence, into $\langle x \rangle$-modules.

Since $U(\fa)Q$ is isomorphic to a Kac module of maximal atypicality for $\fa$, and $x_{1}$ lies in the detecting subalgebra of $\fa$, it follows from the proof of \cite[Theorem 6.2.1]{BKN4} that $U(\fa)Q$ has a direct sum decomposition as an $\langle x_{1} \rangle$-module with one direct summand being trivial.  Furthermore, since $[x_{0},\fa ] = 0$ and $x_{0}$ acts as zero on $Q$, we have that $\langle x_{0} \rangle$ acts trivially on $U(\fa)Q$.  From this it follows that the decomposition of $U(\fa)Q$ is also as $\langle x_{0} \rangle$-modules and, hence, as $\langle x \rangle$-modules.  That is, $U(\fa)Q$ contains a trivial direct summand as an $\langle x \rangle$-module and hence, by \eqref{E:breakdown}, so does $M$.  Therefore, $M$ is not projective as a $U(\langle x \rangle)$-module and so $x \in V^{r}_{\ff_{\1}}(M)$ as desired.
\end{proof}

\subsection{}					

 Recall from \cref{ex:supportsforstandardparabolics} that $\fg = \gl (m|n)$ admits a $\Z$-grading $\fg=\fg_{-1} \oplus\fg_{0}\oplus \fg_{1}$.   We continue the setup of \cref{SS:BorelWeilBott} with $G=\operatorname{GL}(m|n)$ and with $G_{-1}$  as the closed subgroup of $G$ with $\operatorname{Lie}(G_{-1})=\fg_{-1}$.  In particular, the parabolic subgroup $P \subseteq G$ consists of block upper triangular matrices with Levi subgroup $L=\operatorname{GL}(m|n-k) \times \operatorname{GL}(k)$ for some $0 \leq k \leq n$.   Then $P=L\ltimes U$, where $L$ is the Levi subgroup and $U$ is a unipotent subgroup. Let $Z$ be the closed subgroup $P\cap G_{-1}$ and $\mathfrak z$ be the Lie superalgebra associated to $Z$.   

  The following theorem provides a relationship between supports for a $P$-module $N$ and the $G$-module $\Ind_{P}^{G} N$ (cf.\ \cite[(5.4.1) Theorem]{NPV}).  Note that $\fz$ and $\fg_{-1}$ are abelian Lie superalgebras concentrated in odd degree and, hence, the support data $V_{\fz}(-)$ and $V_{\fg_{-1}}(-)$ are as in \cref{ex:abelianLiesuper} and \cref{ex:supportsforstandardparabolics}.  
\begin{theorem} \label{T:supportupperbound} Let $N$ be a finite dimensional $P$-module and $M=\operatorname{ind}_{P}^{G}N$. Suppose that $\HH^{j}(G/P, N)=R^{j}\operatorname{ind}_{P}^{G}N=0$ for all $j>0$. 
Then 
\begin{itemize} 
\item[(a)] $V_{\fg_{-1}}(\operatorname{ind}_{P}^{G} N)=G_{0}\cdot V_\fz(M,N)$. 
\item[(b)] $V_{\fg_{-1}}(\operatorname{ind}_{P}^{G} N)\subseteq G_{0}\cdot  V_\fz(N)$. 
\end{itemize} 
\end{theorem}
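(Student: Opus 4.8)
Throughout write $R=\HH^{\bullet}(\fg_{-1},\C)\cong S^{\bullet}(\fg_{-1}^{*})$, and note that $M=\operatorname{ind}_{P}^{G}N$ is finite dimensional (a space of global sections on the projective quotient $G/P$), so the rank variety descriptions for the purely odd abelian superalgebras $\fz\le\fg_{-1}$ as in \cref{ex:abelianLiesuper} apply to $M$ and to $N|_{\fz}$. From the tensor product property for these rank varieties established in \cref{SS:z-properties}, together with $\operatorname{Ext}^{\bullet}_{\fz}(M,N)\cong\HH^{\bullet}(\fz;M^{*}\otimes N)$, one obtains $V_{\fz}(M,N)=V_{\fz}(M)\cap V_{\fz}(N)$. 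The plan for (a) is a double inclusion: the forward inclusion via the spectral sequence \eqref{E:inversionspectral}, and the reverse one by a soft rank-variety argument; part (b) is then immediate.

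For $V_{\fg_{-1}}(\operatorname{ind}_{P}^{G}N)\subseteq G_{0}\cdot V_{\fz}(M,N)$: the hypothesis $R^{j}\operatorname{ind}_{P}^{G}N=0$ for $j>0$ feeds into \cref{P:cohomology-identity} to give $\operatorname{Ext}^{\bullet}_{\fg_{-1}}(M,M)\cong\operatorname{Ext}^{\bullet}_{\fg_{-1}}(M,\operatorname{ind}_{P_{0}\ltimes Z}^{Q^{-}}N)$ as $R$-modules, and into \cref{P:equiv-vanishing} to give $R^{j}\operatorname{ind}_{P_{0}\ltimes Z}^{Q^{-}}N=0$ for $j>0$, which is exactly the condition needed for the spectral sequence \eqref{E:inversionspectral}
\[
E_{2}^{i,j}=R^{i}\operatorname{ind}^{G_{0}}_{P_{0}}\operatorname{Ext}^{j}_{\fz}(M,N)\ \Longrightarrow\ \operatorname{Ext}^{i+j}_{\fg_{-1}}(M,M)
\]
to be available. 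Regarding this as a spectral sequence of finitely generated graded $R$-modules, the support over $R$ of the abutment --- i.e.\ $V_{\fg_{-1}}(M)$ --- is contained in $\bigcup_{i,j}\operatorname{Supp}_{R}E_{2}^{i,j}$.

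It therefore suffices to bound $\operatorname{Supp}_{R}\!\left(R^{i}\operatorname{ind}^{G_{0}}_{P_{0}}A\right)$ for a finitely generated $\HH^{\bullet}(\fz,\C)$-module $A$ carrying a compatible $P_{0}$-action, the claim being that it lies in $G_{0}\cdot\operatorname{Supp}_{\HH^{\bullet}(\fz,\C)}(A)$, where the $\fz$-support is placed inside $\operatorname{Spec}(R)$ via the surjection $S^{\bullet}(\fg_{-1}^{*})\twoheadrightarrow S^{\bullet}(\fz^{*})$ dual to $\fz\hookrightarrow\fg_{-1}$, and $G_{0}$ acts through its action on $\fg_{-1}$. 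This is the technical core: one realises $R^{i}\operatorname{ind}^{G_{0}}_{P_{0}}A$ as the $i$-th sheaf cohomology on $G_{0}/P_{0}$ of the associated bundle, observes that at a point $gP_{0}$ the $R$-module structure on the fibre is the $S^{\bullet}(\fz^{*})$-structure twisted by $g^{-1}$, and concludes that localising at a prime outside $G_{0}\cdot\operatorname{Supp}(A)$ annihilates every fibre and hence the cohomology. Applying this with $A=\operatorname{Ext}^{j}_{\fz}(M,N)$ gives $V_{\fg_{-1}}(M)\subseteq\bigcup_{j}G_{0}\cdot\operatorname{Supp}_{\HH^{\bullet}(\fz,\C)}\operatorname{Ext}^{j}_{\fz}(M,N)=G_{0}\cdot V_{\fz}(M,N)$; this is the analogue for $\gl(m|n)$ of the corresponding step in \cite{NPV}.

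For the reverse inclusion no vanishing hypothesis is needed: since $\fz\le\fg_{-1}$, any $x\in\fz$ with $M|_{\langle x\rangle}$ non-projective also lies in $V^{r}_{\fg_{-1}}(M)$, so $V_{\fz}(M)\subseteq V_{\fg_{-1}}(M)$ under $\operatorname{Proj}(\fz)\hookrightarrow\operatorname{Proj}(\fg_{-1})$; combined with $V_{\fz}(M,N)=V_{\fz}(M)\cap V_{\fz}(N)\subseteq V_{\fz}(M)$ and the $G_{0}$-stability of $V_{\fg_{-1}}(M)$ (as $G_{0}$ normalises $\fg_{-1}$ and acts on $M$), this yields $G_{0}\cdot V_{\fz}(M,N)\subseteq G_{0}\cdot V_{\fg_{-1}}(M)=V_{\fg_{-1}}(M)$, proving (a). For (b), $V_{\fz}(M,N)=V_{\fz}(M)\cap V_{\fz}(N)\subseteq V_{\fz}(N)$ gives $G_{0}\cdot V_{\fz}(M,N)\subseteq G_{0}\cdot V_{\fz}(N)$, and (b) follows from (a). The one genuinely nontrivial step is the induction-support bound of the third paragraph --- equivalently, making precise the $R$-linear, finitely generated structure of \eqref{E:inversionspectral} so that localisation behaves well; everything else is formal from the cohomological vanishing results already proved and the rank variety dictionary.
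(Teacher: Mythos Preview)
Your proof is correct and follows essentially the same route as the paper: the easy inclusion via $V_{\fz}(M,N)\subseteq V_{\fz}(M)\subseteq V_{\fg_{-1}}(M)$ together with $G_{0}$-invariance, and the hard inclusion via the spectral sequence \eqref{E:inversionspectral} by bounding the $R$-support of each $E_{2}^{i,j}=R^{i}\operatorname{ind}^{G_{0}}_{P_{0}}\operatorname{Ext}^{j}_{\fz}(M,N)$ inside $G_{0}\cdot V_{\fz}(M,N)$. The only cosmetic difference is that the paper cites \cite[(5.2.1) Proposition]{NPV} for this bound and then spells out the filtration argument explicitly (using Grothendieck vanishing to get a uniform finite filtration length, so that a fixed power of the defining ideal annihilates the abutment), whereas you sketch the underlying fibrewise-localisation proof of that proposition; the substance is identical.
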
 

\begin{proof} Part (b) follows from part (a) since $V_\fz(M,N)\subseteq  V_\fz(N)$. This inclusion 
also shows that 
$$V_\fz(M,N)\subseteq  V_\fz(M)\subseteq V_{\fg_{-1}}(M).$$
Since $V_{\fg_{-1}}(M)$ is $G_{0}$-invariant, it follows that 
$$G_{0} \cdot V_\fz(M,N)\subseteq  V_{\fg_{-1}}(M).$$

We now consider the spectral sequence (\ref{E:inversionspectral}).  One 
can apply \cite[(5.2.1) Proposition]{NPV} to show that the ideal ${\mathcal I}$ in $R=\operatorname{H}^{\bullet}(\fg_{-1},\C)$  which defines 
$G_{0} \cdot \VV_\fz(M,N)$ annihilates $E_{2}^{p,q}$ for all $p, q\geq 0$. 

Therefore, ${\mathcal I}\subseteq \text{Ann}_R\ E^{\bullet,\bullet}_\infty$.
From the Grothendieck vanishing theorem, $R^j\Ind_{P_{0}}^{G_{0}} N=0$ for $j> s=\dim\,G_{0}/P_{0}$. 
Consequently,  $\Ext^\bullet_{\fg_{-1}}(\Ind_P^G N,\Ind_P^G N)$ has an $R$-stable filtration 
$0=\FF^0\subseteq \FF^1\subseteq \cdots \subseteq \FF^s=\Ext^\bullet_{\fg_{-1}}(\Ind_P^G N, \Ind_P^G N)$
where $\FF^i/\FF^{i-1}=\bigoplus_j E^{s-i+1,j}_\infty$.

Observe that ${\mathcal I}\cdot \FF^i\subseteq \FF^{i-1}$, thus ${\mathcal I}^s$ annihilates 
$\Ext^\bullet_{\fg_{-1}}(\Ind_P^G N,\Ind_P^G N)$ which proves that  $V_{\fg_{-1}}(\Ind_P^G N)\subseteq
G_{0}\cdot V_{\fz}(\Ind_P^G N,N)$. 
\end{proof}

\subsection{} The previous theorem allows us to give us a constraint on the $\ff_{\1}$ rank variety of $H^{0}(G/P, N)$ for certain $P$-modules $N$.  

\begin{theorem}\label{T:inducedmodulevarietyfirstinclusion}  Fix $n-m \leq k \leq n$ and let $P$ be the parabolic defined above. Let $N$ be a finite dimensional $P$-module such that $R^{j}\operatorname{ind}_{P}^{G}N=0$ for $j>0$. Then 
\[
V^{r}_{\ff_{\1}}(H^{0}(G/P, N)) \subseteq \Sigma_{m}V(0,m-(n-k),0).
\] 

\end{theorem}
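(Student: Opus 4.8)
The plan is to degenerate an arbitrary point of $V^{r}_{\ff_{\1}}(M)$ to its $\fg_{-1}$-part and then apply \cref{T:supportupperbound}. Set $M=H^{0}(G/P,N)=\operatorname{ind}_{P}^{G}N$; this module is finite dimensional, hence lies in $\FF_{(\fg,\fg_{\0})}$, and (as used throughout \cref{S:VarietiesandGeometricInduction}) $V^{r}_{\ff_{\1}}(M)$ is a closed, conical subvariety of $\ff_{\1}$ stable under $\operatorname{Norm}_{G_{\0}}(\ff_{\1})$. Write a point $x\in\ff_{\1}$ as $x=x^{+}+x^{-}$ with $x^{+}\in\ff_{\1}\cap\fg_{1}$ (spanned by the $e_{m-j+1,m+j}$, the $X$-directions) and $x^{-}\in\ff_{\1}\cap\fg_{-1}$ (spanned by the $e_{m+j,m-j+1}$, the $Y$-directions). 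The diagonal matrix $d=(sI_{m},I_{n})$ lies in $T\subseteq\operatorname{Norm}_{G_{\0}}(\ff_{\1})$, and its adjoint action scales $\ff_{\1}\cap\fg_{1}$ by $s$ and $\ff_{\1}\cap\fg_{-1}$ by $s^{-1}$; hence, for $x\in V^{r}_{\ff_{\1}}(M)$, we get $d\cdot x=sx^{+}+s^{-1}x^{-}\in V^{r}_{\ff_{\1}}(M)$ and, by conicality, $s^{2}x^{+}+x^{-}\in V^{r}_{\ff_{\1}}(M)$ for every $s\in\C^{\times}$. Letting $s\to0$ and using that $V^{r}_{\ff_{\1}}(M)$ is closed yields $x^{-}\in V^{r}_{\ff_{\1}}(M)$.

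Since $x^{-}$ lies in the abelian odd Lie superalgebra $\fg_{-1}$, the condition $x^{-}\in V^{r}_{\ff_{\1}}(M)$ is exactly the statement that $M$ is not projective over $U(\langle x^{-}\rangle)$, which by the rank-variety description of cohomological support for abelian odd Lie superalgebras (\cref{ex:abelianLiesuper}) is equivalent to $x^{-}$ lying in the cohomological support $V_{\fg_{-1}}(M)$, viewed as a conical subvariety of $\fg_{-1}$. The hypothesis $R^{j}\operatorname{ind}_{P}^{G}N=0$ for $j>0$ now lets me invoke \cref{T:supportupperbound}(b): $V_{\fg_{-1}}(M)\subseteq G_{0}\cdot V_{\fz}(N)$. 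As $V_{\fz}(N)\subseteq\fz$, this gives $x^{-}\in G_{0}\cdot\fz$.

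It remains to identify $G_{0}\cdot\fz$ and read off the conclusion. Viewing $\fg_{-1}$ as $\Hom(\C^{m},\C^{n})$ (the lower-left block), with $G_{0}=\operatorname{GL}(m)\times\operatorname{GL}(n)$ acting by $(g,h)\cdot\phi=h\phi g^{-1}$, the subspace $\fz=\fp\cap\fg_{-1}$ is the set of maps with image inside a fixed $(n-k)$-dimensional coordinate subspace; since $n-k\le m$ (because $k\ge n-m$), elementary linear algebra gives $G_{0}\cdot\fz=\{\phi:\operatorname{rank}\phi\le n-k\}$. Writing $x^{-}=\sum_{j}y_{j}e_{m+j,m-j+1}$, the corresponding map has rank exactly $\#\{j:y_{j}\ne0\}$, so $x^{-}\in G_{0}\cdot\fz$ forces at least $m-(n-k)$ of the $y_{j}$ to vanish. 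Because the $Y$-coordinates of $x$ coincide with those of $x^{-}$, and a point of $\ff_{\1}$ lies in $\Sigma_{m}V(0,m-(n-k),0)$ precisely when at least $m-(n-k)$ of its $Y$-coordinates vanish, we conclude $x\in\Sigma_{m}V(0,m-(n-k),0)$; since $x$ was arbitrary this is the asserted inclusion. The step most in need of care is the second paragraph: making the passage from the rank-variety condition on $\ff_{\1}\cap\fg_{-1}$ to the cohomological variety $V_{\fg_{-1}}(M)$ precise and $G_{0}$-equivariant, and matching the orbit appearing in \cref{T:supportupperbound} with the determinantal variety computed in the last step; the degeneration argument and the rank bookkeeping are routine.
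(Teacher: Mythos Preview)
Your proof is correct and follows essentially the same approach as the paper: degenerate an arbitrary point of $V^{r}_{\ff_{\1}}(M)$ to its $\fg_{-1}$-component via a one-parameter subgroup of $T$ and closedness, pass to $V_{\fg_{-1}}(M)$, apply \cref{T:supportupperbound}(b), and then use the rank bound on elements of $G_{0}\cdot\fz$ to count vanishing $Y$-coordinates. Your treatment is in fact slightly more explicit than the paper's at the steps you flagged as delicate (the rank-variety identification and the description of $G_{0}\cdot\fz$ as a determinantal variety), but there is no substantive difference.
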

 
\begin{proof}    Let  
\[
z \in V^{r}_{\ff_{\1}}(H^{0}(G/P,N))
\] and write $z=z_{-1} + z_{1}$ where $z_{\pm 1} \in \fg_{\pm 1}$.

For any fixed non-zero $a \in \R$ let $g_{a} \in G_{\0}$ be the group element given by the diagonal matrix 
\[
g_{a} = \operatorname{Diag}\left(d_{1}, \dotsc , d_{m+n} \right)
\] with 
\[
d_{i} = \begin{cases} a, & i=1, \dotsc , m, \\
                              a^{-1}, & i=m+1, \dotsc , m+n.
\end{cases}. 
\] Since $V^{r}_{\ff_{\1}}(H^{0}(G/P,N))$ is a conical $N$-stable variety we have $a^{2}g_{a}zg_{a}^{-1} \in V^{r}_{\ff_{\1}}(H^{0}(G/P,N))$.  That is, 
\[
a^{2}g_{ a}zg_{ a}^{-1} = z_{-1} + a^{4}z_{1} \in V^{r}_{\ff_{\1}}(H^{0}(G/P,N))
\]  Iterating, we have 
\[
a^{2t}g^{t}_{ a}zg_{ a}^{-t} = z_{-1} + a^{4t}z_{1} \in V^{r}_{\ff_{\1}}(H^{0}(G/P,N))
\] for all integers $t>0$.  By taking $0 < a < 1$ and using that $V^{r}_{\ff_{\1}}(H^{0}(G/P,N))$ is a closed variety we see that in the limit we have 
\[
z_{-1} \in V^{r}_{\ff_{\1}}(H^{0}(G/P,N)).
\] 

But then $z_{-1} \in V_{{\fg}_{-1}}(H^{0}(G/P,N))$ and so it follows from \cref{T:supportupperbound}(b) that 
\[
z_{-1} \in G_{0}\cdot V_\fz(N) \subseteq G_{0}\cdot \mathfrak{z}.
\]  However, since $\mathfrak{z}$ consists matrices of rank at most $n-k$, so too must the elements of $G_{0}\cdot \mathfrak{z}$.  For $z_{-1}$ to have rank at most $n-k$ it follows that $z \in \Sigma_{m}V(0,m-(n-k),0)$.
\end{proof}

\subsection{The case when  \texorpdfstring{$s=p=0 $}{s=p=0}}\label{SS:casetparezero}  We can now realize $\Sigma_{m}V(0,t,0)$ for any $0\leq t \leq m$. 


\begin{theorem}\label{T:Vstprealizations}  Let $0\leq t \leq m$ and let $\lambda \in X_{0}^{+}$ be a dominant integral weight with $\lambda_{i}=-\lambda_{2m+1-i}$ for $i=1, \dotsc , m$ and $\lambda_{i}-\lambda_{i+1} > t(n-m+t) + mn$ for $i=1, \dotsc ,$\linebreak[1] $ m-1, m+1, \dotsc , m+n-1$.  Let $P$ be the parabolic subgroup of $\operatorname{GL}(m|n)$ which is block upper triangular and has Levi subgroup isomorphic to $\operatorname{GL}(m|m-t) \times \operatorname{GL}(n-m+t)$.  Then 
\[
V^{r}_{\ff_{\1}}\left(H^{0}(G/P, L_{\fp}(\lambda)^{*})^{*} \right) = \Sigma_{m}V(0,t,0).
\]
\end{theorem}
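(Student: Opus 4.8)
The plan is to prove the asserted equality by establishing the two opposite inclusions, each obtained by specializing a theorem already proved above to the parabolic $P = P_{k}$ with $k = n-m+t$; for this $k$ the Levi subgroup is $\operatorname{GL}(m|m-t)\times\operatorname{GL}(n-m+t)$ as in the statement, and one has the numerical identities $n-k = m-t$ and $m+k-n = t$, which will make the constants in the three cited theorems line up. I would begin with two routine reductions. First, $H^{0}(G/P,-)$ produces rational $G_{\0}$-modules, so $M := H^{0}(G/P, L_{\fp}(\lambda)^{*})^{*}$ lies in $\FF_{(\fg,\fg_{\0})}$; in particular it is a legitimate realizing module. Second, since each $U(\langle x\rangle)$ is a finite-dimensional self-injective (super)algebra, a finite-dimensional module is $U(\langle x\rangle)$-projective if and only if its dual is, whence $V^{r}_{\ff_{\1}}(M) = V^{r}_{\ff_{\1}}(H^{0}(G/P, L_{\fp}(\lambda)^{*}))$; this lets me pass freely between $M$ and its dual.

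For the inclusion $V^{r}_{\ff_{\1}}(M)\subseteq\Sigma_{m}V(0,t,0)$ I would first observe that the dominance hypothesis $\lambda_{i}-\lambda_{i+1} > t(n-m+t)+mn$ forces $\lambda_{i}-\lambda_{i+1} > mn$ for $i=1,\dots,m-1,m+1,\dots,m+n-1$, so \cref{T:SuperBWB} applies with $d=0$ and gives $H^{j}(G/P, L_{\fp}(\lambda)^{*}) = R^{j}\operatorname{ind}_{P}^{G}L_{\fp}(\lambda)^{*} = 0$ for all $j>0$. This is exactly the vanishing needed to invoke \cref{T:inducedmodulevarietyfirstinclusion} with $N = L_{\fp}(\lambda)^{*}$, which yields $V^{r}_{\ff_{\1}}(H^{0}(G/P, L_{\fp}(\lambda)^{*}))\subseteq\Sigma_{m}V(0, m-(n-k), 0) = \Sigma_{m}V(0,t,0)$, and the reduction above transfers this bound to $M$.

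For the reverse inclusion I would identify $\ff_{\1}\cap\fp$ explicitly. The parabolic $P_{k}$ stabilizes the flag $0\subsetneq\langle v_{1},\dots,v_{m+n-k}\rangle\subsetneq\C^{m|n}$, so a matrix unit $e_{a,b}$ fails to lie in $\fp$ precisely when $a > m+n-k \geq b$; running this against the basis $\{e_{m-j+1,m+j}\}$ of $X_{j}$-directions and $\{e_{m+j,m-j+1}\}$ of $Y_{j}$-directions of $\ff_{\1}$, and using $m+n-k = 2m-t$, one finds $\ff_{\1}\cap\fp = Z(Y_{m-t+1},\dots,Y_{m})$. Since $\Sigma_{m}$ permutes $X_{1},\dots,X_{m}$ and $Y_{1},\dots,Y_{m}$ simultaneously, this set is $\Sigma_{m}$-conjugate to $V(0,t,0) = Z(Y_{1},\dots,Y_{t})$. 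The hypotheses of the present theorem are precisely those of \cref{T:Nprimesupport} for this $k$ (note $(m+k-n)k + mn = t(n-m+t)+mn$), so \cref{T:Nprimesupport} gives $\ff_{\1}\cap\fp\subseteq V^{r}_{\ff_{\1}}(M)$; and because $M$ is a $G_{\0}$-module with $\Sigma_{m}\subseteq G_{\0}$, the rank variety $V^{r}_{\ff_{\1}}(M)$ is $\Sigma_{m}$-stable, so $\Sigma_{m}V(0,t,0) = \Sigma_{m}(\ff_{\1}\cap\fp)\subseteq V^{r}_{\ff_{\1}}(M)$. Combining the two inclusions finishes the proof.

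I do not expect a serious obstacle here: all of the substantive input — the coarse super Borel--Weil--Bott theorem, the grading computation feeding \cref{T:Nprimesupport}, and the induction-theoretic upper bound \cref{T:inducedmodulevarietyfirstinclusion} — is already available, and what remains is bookkeeping. The only point requiring genuine care is checking that the numerical data agree across the three theorems: that $t(n-m+t)$ equals $(m+k-n)k$, that $m-(n-k)$ equals $t$, and that the weight-difference index ranges are compatible. Once these are verified the argument is a direct concatenation of the cited results.
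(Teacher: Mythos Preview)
Your proposal is correct and follows essentially the same argument as the paper: both directions are obtained by specializing \cref{T:inducedmodulevarietyfirstinclusion} and \cref{T:Nprimesupport} to $k=n-m+t$, with \cref{T:SuperBWB} supplying the required higher vanishing. You are in fact slightly more careful than the paper in one spot: the paper asserts $V(0,t,0)=\ff_{\1}\cap\fp$, whereas your computation shows $\ff_{\1}\cap\fp=Z(Y_{m-t+1},\dots,Y_{m})$, which is only a $\Sigma_{m}$-conjugate of $V(0,t,0)=Z(Y_{1},\dots,Y_{t})$; since one then applies $\Sigma_{m}$ anyway, this makes no difference to the conclusion.
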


\begin{proof}   Set $M=H^{0}(G/P, L_{\fp}(\lambda)^{*})$.  Since the higher derived functors vanish by Theorem~\ref{T:SuperBWB} and since $V^{r}_{\ff_{\1}}(M^{*})=V^{r}_{\ff_{\1}}(M)$, we can use Theorem~\ref{T:inducedmodulevarietyfirstinclusion} to obtain the inclusion 
\[
V^{r}_{\ff_{\1}}\left(H^{0}(G/P, L_{\fp}(\lambda)^{*})^{*} \right) \subseteq \Sigma_{m}V(0,t,0).
\]

On the other hand, if we set $k=n-m+t$ in Theorem~\ref{T:Nprimesupport}, then we have the inclusion 
\[
\ff_{\1} \cap  \fp  \subseteq V^{r}_{\ff_{\1}}(M). 
\] But since $V(0,t,0)= \ff_{\1}\cap \fp$ and $V^{r}_{\ff_{\1}}(M)$ is stable under the action of $\Sigma_{m}$, we then have  
\[
\Sigma_{m}V(0,t,0) \subseteq V^{r}_{\ff_{\1}}(M).
\]  This proves the desired equality.
\end{proof}

\subsection{The case when  \texorpdfstring{$t=p=0$}{t=p=0}}\label{SS:casesparezero}  If we set $\bar{i} \in \Z_{2}$ by the rule $\bar{i}=\0 $ for $i=1, \dotsc , m$ and $\bar{i}=\1 $ for $i=m+1, \dotsc , m+n$, then we can define an automorphism of $\gl (m|n)$  by $e_{i,j} \mapsto -(-1)^{\bar{i}(\bar{i}+\bar{j})}e_{j,i}$.  Given a finite dimensional $\fg$-module $M$ let $M^{\tau}$ be the $\fg$-module $M^{*}$ with the action twisted by this automorphism.  

\begin{theorem}\label{T:Vstprealizations2}  If $0 \leq  s \leq m$, then there exists a finite dimensional $\fg$-module $M$ such that 
\[
V^{r}_{\ff_{\1}}(M) = \Sigma_{m}V(s,0,0).
\]
\end{theorem}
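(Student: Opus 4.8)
The plan is to deduce this from \cref{T:Vstprealizations} by exploiting the symmetry of $\ff_{\1}$ that exchanges the coordinate families $X_{1},\dots,X_{m}$ and $Y_{1},\dots,Y_{m}$ — namely, the restriction to $\ff_{\1}$ of the automorphism $\tau$ defined just before the statement. First I would invoke \cref{T:Vstprealizations} with $t=s$ to obtain $M_{0}=H^{0}(G/P,L_{\fp}(\lambda)^{*})^{*}\in\FF_{(\fg,\fg_{\0})}$ with $V^{r}_{\ff_{\1}}(M_{0})=\Sigma_{m}V(0,s,0)=\bigcup_{w\in\Sigma_{m}}Z(Y_{w(1)},\dots,Y_{w(s)})$, and then set $M:=M_{0}^{\tau}$, i.e.\ the $\fg$-module $M_{0}^{*}$ with action twisted by $\tau$. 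One checks that $M\in\FF_{(\fg,\fg_{\0})}$: it is finite dimensional, and since $\tau$ restricts on $\fg_{\0}=\gl(m)\oplus\gl(n)$ to the differential of the group automorphism $(g_{1},g_{2})\mapsto((g_{1}^{T})^{-1},(g_{2}^{T})^{-1})$ of $G_{\0}$, dualizing and twisting the $G_{\0}$-action on $M_{0}$ along this automorphism endows $M$ with a compatible, still completely reducible, $G_{\0}$-action.

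The heart of the argument is then two short computations. First, the action of $\tau$ on $\ff_{\1}$: writing $\xi_{j}=e_{m-j+1,m+j}$ and $\eta_{j}=e_{m+j,m-j+1}$ for the basis of $\ff_{\1}$ from \cref{SS:varietynotations}, the defining formula $e_{i,j}\mapsto-(-1)^{\bar{i}(\bar{i}+\bar{j})}e_{j,i}$ gives $\tau(\xi_{j})=-\eta_{j}$ and $\tau(\eta_{j})=\xi_{j}$; in particular $\tau(\ff_{\1})=\ff_{\1}$, and dually $X_{j}\circ\tau=Y_{j}$, $Y_{j}\circ\tau=-X_{j}$, so $\tau$ interchanges the coordinate hyperplanes $\{X_{j}=0\}$ and $\{Y_{j}=0\}$ of $\Proj(\ff_{\1})$. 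Second, the transformation law for rank varieties: for $z\in\ff_{\1}$ the subsuperalgebra $\langle z\rangle$ is carried isomorphically by $\tau$ onto $\langle\tau(z)\rangle$, and restriction of scalars along an algebra isomorphism preserves projectivity, while $V^{r}_{\ff_{\1}}(M_{0}^{*})=V^{r}_{\ff_{\1}}(M_{0})$ (the rank-variety form of (\ref{SS:supportdata}.\ref{E:supportsix})); combining these gives $V^{r}_{\ff_{\1}}(M)=\tau^{-1}\bigl(V^{r}_{\ff_{\1}}(M_{0})\bigr)$.

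Putting the pieces together, $V^{r}_{\ff_{\1}}(M)=\tau^{-1}\bigl(\bigcup_{w}Z(Y_{w(1)},\dots,Y_{w(s)})\bigr)=\bigcup_{w}Z(X_{w(1)},\dots,X_{w(s)})=\Sigma_{m}V(s,0,0)$, where the middle equality uses $Z(Y_{i}\circ\tau)=Z(-X_{i})=Z(X_{i})$. The argument is almost entirely bookkeeping; the only points needing care are confirming that $\tau$ genuinely stabilizes $\ff_{\1}$ and swaps its two coordinate families (a short matrix-unit computation) and that $M_{0}^{\tau}$ stays in $\FF_{(\fg,\fg_{\0})}$ despite $\tau$ not being induced by $G_{\0}$ — I expect neither to cause real difficulty.
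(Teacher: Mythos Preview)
Your proposal is correct and is exactly the approach the paper takes: apply \cref{T:Vstprealizations} with $t=s$ and then twist by the automorphism $\tau$ defined just before the statement, using the identity $V^{r}_{\ff_{\1}}(M_{0}^{\tau})=\tau\bigl(V^{r}_{\ff_{\1}}(M_{0})\bigr)$ (equivalently $\tau^{-1}$, since $\tau^{2}=-\mathrm{id}$ on $\ff_{\1}$). You have simply supplied the details the paper omits --- checking that $\tau$ preserves $\ff_{\1}$ and swaps the $X$- and $Y$-coordinate hyperplanes, and that $M_{0}^{\tau}$ remains in $\FF_{(\fg,\fg_{\0})}$.
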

 
\begin{proof} This follows from \cref{T:Vstprealizations} and the observation that for any finite dimensional $\fg$-module $M$ we have the identity 
\[
V^{r}_{\ff_{\1}}(M^{\tau}) = \tau \left(V^{r}_{\ff_{\1}}(M) \right).\qedhere
\]
\end{proof}

\subsection{The case when  \texorpdfstring{$s,t \geq 0$}{s, t >= 0} and  \texorpdfstring{$p=0$}{p=0}} \label{SS:Coordinate} In what follows we make extensive use of the fact that $\ff$-support varieties are a support data as defined in Section~\ref{SS:supportdata}.  We also use the fact that if $G$ is a group acting on a set and $A$, $B$ are subsets of that set  then  
\[
GA \cap GB = G(A\cap GB) = G(GA \cap B).
\]

\begin{prop} \label{P:CoordsNoOverlap}
Fix $s, t \ge  0$. The variety $\Sigma_{m}V(s,t,0)$ is realizable as the support of a module in $\F$.
\end{prop}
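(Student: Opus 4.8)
Throughout we assume $s+t\le m$, since otherwise $V(s,t,0)$ is not defined. The plan is to realize $\Sigma_{m}V(s,t,0)$ as $V^{r}_{\ff_{\1}}(-)$ of a tensor product of three modules whose supports intersect in exactly $\Sigma_{m}V(s,t,0)$; the tensor product property (\ref{SS:supportdata}.\ref{E:supportfive}) of the $\ff$-support data then finishes the job. First I would invoke \cref{T:Vstprealizations2} and \cref{T:Vstprealizations} to produce modules $M_{1},M_{2}\in\F$ with $V^{r}_{\ff_{\1}}(M_{1})=\Sigma_{m}V(s,0,0)$ and $V^{r}_{\ff_{\1}}(M_{2})=\Sigma_{m}V(0,t,0)$. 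Unwinding the definitions, $\Sigma_{m}V(s,0,0)$ is the set of points of $\Proj(\ff_{\1})$ at which at least $s$ of the coordinate functions $X_{1},\dots,X_{m}$ vanish, and $\Sigma_{m}V(0,t,0)$ is the set at which at least $t$ of $Y_{1},\dots,Y_{m}$ vanish. Their intersection is in general strictly larger than $\Sigma_{m}V(s,t,0)$, because a point lies in $\Sigma_{m}V(s,t,0)$ precisely when its set of vanishing $X$-coordinates and its set of vanishing $Y$-coordinates can be chosen \emph{disjoint}, of sizes $s$ and $t$, while the intersection only requires such (possibly overlapping) sets to exist.

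To excise this discrepancy I would introduce a weight-zero module. Since $X_{j}$ and $Y_{j}$ are dual to matrix units of opposite $T$-weight, each product $X_{j}Y_{j}$, and hence each monomial $\prod_{j\in S'}X_{j}Y_{j}$, is homogeneous of weight zero for $T$. Setting $k=m-s-t\ge 0$, let
\[
\mathcal{G}=\setof{\textstyle\prod_{j\in S'}X_{j}Y_{j}}{S'\subseteq\{1,\dots,m\},\ |S'|=k+1}.
\]
This is a finite, $\Sigma_{m}$-stable set of weight-zero homogeneous polynomials, and $Z(\mathcal{G})$ is exactly the set of points at which at most $k=m-s-t$ of the indices $j$ satisfy $X_{j}\ne 0$ and $Y_{j}\ne 0$ simultaneously. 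By the realization proposition for weight-zero polynomials (the Proposition in \cref{SS:Weight0}) there is a module $M_{3}\in\F$ with $V^{r}_{\ff_{\1}}(M_{3})=\Sigma_{m}Z(\mathcal{G})=Z(\mathcal{G})$.

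Finally I would set $M=M_{1}\otimes M_{2}\otimes M_{3}\in\F$ and apply (\ref{SS:supportdata}.\ref{E:supportfive}) twice to obtain $V^{r}_{\ff_{\1}}(M)=\Sigma_{m}V(s,0,0)\cap\Sigma_{m}V(0,t,0)\cap Z(\mathcal{G})$, so that the proposition follows once one verifies the set-theoretic identity
\[
\Sigma_{m}V(s,0,0)\cap\Sigma_{m}V(0,t,0)\cap Z(\mathcal{G})=\Sigma_{m}V(s,t,0).
\]
The inclusion $\supseteq$ is immediate from the descriptions above, using that the left-hand side is $\Sigma_{m}$-stable. For $\subseteq$, given a point let $A$ (resp.\ $B$) be the set of indices at which $X_{j}$ (resp.\ $Y_{j}$) vanishes; membership in the left-hand side says exactly that $|A|\ge s$, $|B|\ge t$, and $|A\cup B|\ge s+t$. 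A short greedy argument then yields disjoint subsets $S\subseteq A$ and $T\subseteq B$ with $|S|=s$ and $|T|=t$ (fill $S$ from $A\setminus B$ first and then from $A\cap B$, and $T$ from $B\setminus A$ first and then from $A\cap B$; the inequality $|A\cup B|\ge s+t$ guarantees enough uncommitted room in $A\cap B$ for whichever parts of $S$ and $T$ must land there). Choosing $\sigma\in\Sigma_{m}$ with $\sigma(\{1,\dots,s\})=S$ and $\sigma(\{s+1,\dots,s+t\})=T$ then shows the point lies in $\sigma\cdot V(s,t,0)$; the orbit identity $GA\cap GB=G(A\cap GB)$ recalled at the start of this subsection can be used to tidy the bookkeeping with the $\Sigma_{m}$-orbits.

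The step I expect to be the main obstacle is not any one computation but identifying the correct correction term: recognizing that $M_{1}\otimes M_{2}$ over-realizes and that the overshoot is governed precisely by the weight-zero condition ``at most $m-s-t$ indices have both coordinates nonzero,'' which \cref{SS:Weight0} allows us to realize. Once that is in hand, the displayed set identity is elementary combinatorics.
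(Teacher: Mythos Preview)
Your argument is correct, and it takes a genuinely different route from the paper's. The paper proceeds by induction on $t$: assuming $\Sigma_{m}V(s,t-1,0)$ is realized, it intersects that variety with $\Sigma_{m}V(0,t,0)$ to obtain $U=\Sigma_{m}V(s,t,0)\cup\Sigma_{m}V(s,t,1)$, and separately with a weight-zero locus (the vanishing of an elementary symmetric polynomial in the $Z_{j}=X_{j}Y_{j}$) to obtain $W=\Sigma_{m}V(s,t,0)\cup\Sigma_{m}V(s+1,t-1,0)$; then $U\cap W=\Sigma_{m}V(s,t,0)$.

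You instead give a direct, non-inductive construction, tensoring the realizations of $\Sigma_{m}V(s,0,0)$, $\Sigma_{m}V(0,t,0)$, and the weight-zero locus $Z(\mathcal{G})=\{\text{at least }s+t\text{ of the }Z_{j}\text{ vanish}\}$. The trade-off is that your approach avoids the inductive scaffolding at the cost of a slightly more delicate set-theoretic verification at the end: the paper's inductive step only needs to track one extra vanishing condition at a time, whereas you must prove directly that $|A|\ge s$, $|B|\ge t$, $|A\cup B|\ge s+t$ implies the existence of disjoint $S\subseteq A$, $T\subseteq B$ of the right sizes. Your greedy argument for this is fine (and is essentially the defect form of Hall's theorem in a trivial bipartite setting). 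Both approaches draw on exactly the same ingredients---\cref{T:Vstprealizations}, \cref{T:Vstprealizations2}, and the weight-zero proposition of \cref{SS:Weight0}---so neither is materially shorter, but yours has the aesthetic advantage of producing the realizing module in one shot.
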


\begin{proof}
The proof is by induction on $t$. The base case, $t=0$, is Theorem~\ref{T:Vstprealizations2}. So let $t>0$ and assume the result is true for $\Sigma_{m} V(s,t-1,0)$. Consider first
$$
U := \Sigma_{m} V(s,t-1,0) \cap \Sigma_{m} V(0,t,0).
$$
This variety is realizable as the support of a tensor product, since the first term on the right is realizable by the induction hypothesis, and the second by Theorem~\ref{T:Vstprealizations2}. Viewing the intersection as imposing the condition of the second variety on the first, we need one more $Y_{j}$ to vanish, and this can either overlap with an $X_{i}$ that is already required to vanish, or not. Thus
$$
U = \Sigma_{m} V(s,t,0) \cup \Sigma_{m} V(s,t,1).
$$

For clarity in what follows we will write $Z_{i}=X_{i}Y_{i}$ for $i=1, \dotsc , m$.  Write $p_{j}$ for the degree $j$ elementary symmetric polynomial in the variables $Z_{1}, \dotsc , Z_{m}$, for $j=1, \dotsc , m$.

Next consider
$$
W := \Sigma_{m} V(s,t-1,0) \cap Z(p_{m-s-(t-1)}) .
$$
Again, $W$ is realizable as the support of a tensor product, by the induction hypothesis and \cref{SS:Weight0}.  Note that on $V(s,t-1,0)$, the only term of $p_{m-s-(t-1)}$ which does not automatically vanish is the product of the last $m-s-(t-1)$ variables $Z_{i}$. Thus $\Sigma_{m} V(s,t-1,0) \cap Z(p_{m-s-(t-1)})$ is the variety defined by the vanishing of $s$ of the $X_{i}$, $t-1$ of the $Y_{i}$, and either one additional $Y_{i}$ or one additional $X_{i}$. That is,
\begin{equation*}
W =  \Sigma_{m}V(s,t,0) \cup \Sigma_{m} V(s+1,t-1,0).
\end{equation*}
Finally, $U\cap W$ is realizable as the support of a tensor product, and a calculation of set intersections shows that 
\[
U \cap W = V(s,t,0).\qedhere
\] 
\end{proof}

\subsection{The case when  \texorpdfstring{$s,t \geq p>0$}{s, t >= p>0}} \label{SS:CoordinateII} 

\begin{prop} \label{P:CoordsOverlap}
Fix $s, t \ge p > 0$. The variety $\Sigma_{m} V(s,t,p)$ is realizable as the support of a module in $\F$.
\end{prop}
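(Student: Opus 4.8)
The plan is to realize $\Sigma_{m}V(s,t,p)$ as an intersection of varieties that have already been realized in this section, and then to take it to be the support of the tensor product of the corresponding modules, using that the $\ff_{\1}$-rank variety turns tensor products into intersections (property (\ref{SS:supportdata}.\ref{E:supportfive})). Concretely, I claim that inside $\operatorname{Proj}(\ff_{\1})$ one has
\[
\Sigma_{m}V(s,t,p)=\Sigma_{m}V(s,t-p,0)\,\cap\,\Sigma_{m}V(0,t,0)\,\cap\,\Sigma_{m}V(p,p,p),
\]
where $t-p\ge 0$ because $t\ge p$. Each factor on the right is the $\ff_{\1}$-support of a module in $\F$: the first by \cref{P:CoordsNoOverlap}, the second by \cref{T:Vstprealizations}, and the third by \cref{T:simples}, since $\Sigma_{m}V(p,p,p)$ is the $\ff_{\1}$-rank variety of a simple $\gl(m|n)$-module of atypicality $m-p$. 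Granting the displayed identity, tensoring the three realizing modules and applying property (\ref{SS:supportdata}.\ref{E:supportfive}) twice produces an $M\in\F$ with $V^{r}_{\ff_{\1}}(M)=\Sigma_{m}V(s,t,p)$, as desired.

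Hence the real content is the set-theoretic identity above, and this is where the work lies. For $z\in\ff_{\1}$ let $\alpha(z)$, $\beta(z)$, $\gamma(z)$ be, respectively, the number of indices $i$ with $X_{i}(z)=0$, the number with $Y_{i}(z)=0$, and the number with $X_{i}(z)=Y_{i}(z)=0$. The key step is a Hall-type count showing that for nonnegative integers $a,b,c$ with $c\le\min(a,b)$,
\[
z\in\Sigma_{m}V(a,b,c)\iff \alpha(z)\ge a,\ \beta(z)\ge b,\ \gamma(z)\ge c,\ \alpha(z)+\beta(z)-\gamma(z)\ge a+b-c,
\]
the point being that $z\in\Sigma_{m}V(a,b,c)$ amounts to the existence of sets $A\subseteq\{i:X_{i}(z)=0\}$ and $B\subseteq\{i:Y_{i}(z)=0\}$ with $|A|=a$, $|B|=b$, $|A\cap B|=c$, and the four displayed inequalities are exactly the feasibility conditions for such a choice (choose $A\cap B$ among the indices where both coordinates vanish, then enlarge to $A$ and $B$). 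Feeding $V(s,t,p)$, $V(s,t-p,0)$, $V(0,t,0)$, $V(p,p,p)$ into this characterization gives $\Sigma_{m}V(0,t,0)=\{\beta\ge t\}$, $\Sigma_{m}V(p,p,p)=\{\gamma\ge p\}$, $\Sigma_{m}V(s,t-p,0)=\{\alpha\ge s,\ \beta\ge t-p,\ \alpha+\beta-\gamma\ge s+t-p\}$, and $\Sigma_{m}V(s,t,p)=\{\alpha\ge s,\ \beta\ge t,\ \gamma\ge p,\ \alpha+\beta-\gamma\ge s+t-p\}$; intersecting the first three descriptions yields the fourth. (One can instead avoid the general characterization and prove the identity by a direct double inclusion, reducing the nontrivial inclusion to a statement about the single coordinate subspace $V(s,t-p,0)$ via the identity $GA\cap GB=G(A\cap GB)$ recorded at the start of \cref{SS:Coordinate}.)

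The main obstacle is precisely this combinatorial bookkeeping, and in particular seeing that the factor $\Sigma_{m}V(0,t,0)$ is genuinely needed: imposing $\gamma\ge p$ on the overlap-free variety $V(s,t-p,0)$ does not on its own force $t$ of the $Y$-coordinates to vanish, since the $p$ required overlapping pairs can be produced in ways that reuse $Y$-coordinates already being counted — so the condition $\beta\ge t$ must be reinstated separately. Getting the count of overlapping pairs versus total vanishing $Y$-coordinates exactly right, and handling the degenerate configurations in which the vanishing-$X$ locus is contained in, or disjoint from, the vanishing-$Y$ locus, is the delicate part; once the identity is established, realizability of the three pieces and the passage from intersections of supports to tensor products of modules are immediate from the results already proved.
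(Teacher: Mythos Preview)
Your proof is correct and follows exactly the same approach as the paper: it realizes $\Sigma_{m}V(s,t,p)$ as the intersection $\Sigma_{m}V(s,t-p,0)\cap\Sigma_{m}V(0,t,0)\cap\Sigma_{m}V(p,p,p)$ and invokes \cref{P:CoordsNoOverlap}, \cref{T:Vstprealizations}, and \cref{T:simples} for the three factors. You actually supply more detail than the paper, which simply asserts the set-theoretic identity with ``Observe that''; your Hall-type characterization via $\alpha,\beta,\gamma$ is a clean way to verify it.
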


\begin{proof}
First consider the variety $V(p,p,p)$ where $p>0$. Recall from Theorem~\ref{T:simples} that $V^{r}_{\ff_{\1}}(L(\la)) = \Sigma_{m}\cdot V(p,p,p)$ for a simple module $L(\la)$ of highest weight $\la$ having atypicality $m-p$. 

Now consider the general case $V(s,t,p)$ with $s, t \ge p > 0$. Observe that 
$$\Sigma_{m} V(s,t,p)=\Sigma_{m} V(s,t-p,0) \cap \Sigma_{m} V(0,t,0) \cap \Sigma_{m} V(p,p,p).$$ 
But $\Sigma_{m}V(s,t-p,0)$ is realizable as the support of a module in $\F$ by \cref{P:CoordsNoOverlap}; likewise $\Sigma_{m} V(0,t,0)$ is realizable by \cref{T:Vstprealizations}; and, as we noted above, so is $\Sigma_{m} V(p,p,p)$. Therefore, $\Sigma_{m} V(s,t,p)$ is the support of the tensor product.
\end{proof}

We have now proven that all varieties of the form  $\Sigma_{m} V(s,t,p)$ are realizable.  Any variety $V$ as in \cref{E:generalV} in which $r=0$ is conjugate to some $V(s,t,p)$ under the action of $\Sigma_{m}$.  Therefore, we have in fact realized all varieties of the form $\Sigma_{m}V$ with $V$ as in \cref{E:generalV} and with $r=0$.

\subsection{Mixed Coordinate and Weight Zero Functions} \label{SS:Mixed} In this subsection, we will verify the realization property in the case where $V$ is given by the vanishing of one or more coordinate functions and one or more weight zero polynomials on $\f_{\1}$.

\begin{prop} \label{P:mixed}
Let $V=V(s,t,0) \cap Z(g_{1},\dots,g_{r})$, where $g_{1},\dots,g_{r}$ are weight zero polynomials. Then the variety $\Sigma_{m} V$ is realizable as the support of a module in $\F$.
\end{prop}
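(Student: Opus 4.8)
The plan is to combine the realization result for coordinate-vanishing varieties (\cref{P:CoordsNoOverlap}, which handles $\Sigma_m V(s,t,0)$) with the weight-zero realization result (\cref{SS:Weight0}, which handles $\Sigma_m Z(g_1,\dots,g_r)$) using the tensor product property of the support data. Concretely, by \cref{P:CoordsNoOverlap} there is a module $M_1 \in \FF$ with $V^{r}_{\ff_{\1}}(M_1) = \Sigma_m V(s,t,0)$, and by the Proposition in \cref{SS:Weight0} there is a module $M_2 \in \FF$ with $V^{r}_{\ff_{\1}}(M_2) = \Sigma_m Z(g_1,\dots,g_r)$. Setting $M = M_1 \otimes M_2$ and applying property (\ref{SS:supportdata}.\ref{E:supportfive}) (the tensor product property, which holds for $\ff$-support varieties by the results of \cref{SS:z-properties} applied to $\fz = \ff$) we get
\[
V^{r}_{\ff_{\1}}(M) = V^{r}_{\ff_{\1}}(M_1) \cap V^{r}_{\ff_{\1}}(M_2) = \Sigma_m V(s,t,0) \cap \Sigma_m Z(g_1,\dots,g_r).
\]

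The content of the proposition, then, is really the set-theoretic identity
\[
\Sigma_m V(s,t,0) \cap \Sigma_m Z(g_1,\dots,g_r) = \Sigma_m \left( V(s,t,0) \cap Z(g_1,\dots,g_r) \right) = \Sigma_m V,
\]
so that the module $M$ above realizes $\Sigma_m V$. The first equality is where I would spend the work. Using the general fact recalled at the start of \cref{SS:Coordinate}, namely $GA \cap GB = G(A \cap GB)$ for a group $G$ acting on a set, we have $\Sigma_m V(s,t,0) \cap \Sigma_m Z(g_1,\dots,g_r) = \Sigma_m\left( V(s,t,0) \cap \Sigma_m Z(g_1,\dots,g_r) \right)$. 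It then remains to check that $V(s,t,0) \cap \Sigma_m Z(g_1,\dots,g_r) = V(s,t,0) \cap Z(g_1,\dots,g_r)$, or at least that the two sets have the same $\Sigma_m$-saturation; one inclusion is trivial, and for the other I would argue that a point of $V(s,t,0)$ lying in $\sigma \cdot Z(g_1,\dots,g_r)$ for some $\sigma \in \Sigma_m$ can, after applying an element of the stabilizer of $V(s,t,0)$ inside $\Sigma_m$, be moved into $Z(g_1,\dots,g_r)$ itself — exploiting that each $g_i$ is weight zero and that the $\Sigma_m$-action permutes the coordinate hyperplanes defining $V(s,t,0)$ among themselves.

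The main obstacle is precisely this bookkeeping with the $\Sigma_m$-action: one must be careful that conjugating a point of $V(s,t,0)$ by $\sigma \in \Sigma_m$ need not land back in $V(s,t,0)$, so the naive claim $\Sigma_m V(s,t,0) \cap \Sigma_m Z(\vec g) = \Sigma_m(V(s,t,0) \cap Z(\vec g))$ requires justification rather than being immediate. I expect the cleanest route is to apply the $GA \cap GB$ identity in the form $\Sigma_m A \cap \Sigma_m B = \Sigma_m(\Sigma_m A \cap B)$ with $A = Z(g_1,\dots,g_r)$ and $B = V(s,t,0)$, reducing to understanding $\Sigma_m Z(g_1,\dots,g_r) \cap V(s,t,0)$; since $\Sigma_m Z(g_1,\dots,g_r) = Z(\widetilde{Y})$ for an explicit finite set $\widetilde Y$ of $\Sigma_m$-invariant polynomials (as constructed in the proof in \cref{SS:Weight0}), this intersection is itself cut out by coordinate functions together with $\Sigma_m$-invariant polynomials, and one checks directly that its $\Sigma_m$-saturation equals $\Sigma_m V$. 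Everything else is a formal consequence of the support data axioms and the two previously established realization propositions.
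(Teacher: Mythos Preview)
Your approach has a genuine gap: the set-theoretic identity you are trying to prove,
\[
\Sigma_m V(s,t,0) \cap \Sigma_m Z(g_1,\dots,g_r) = \Sigma_m\bigl(V(s,t,0)\cap Z(g_1,\dots,g_r)\bigr),
\]
is false in general. Take $m=2$, $s=1$, $t=0$, and $g_1 = Z_2 := X_2 Y_2$. Then $V(1,0,0) = Z(X_1)$ and $\Sigma_2 Z(Z_2) = Z(Z_1)\cup Z(Z_2)$. Since $X_1=0$ forces $Z_1=0$, we have $Z(X_1)\subseteq \Sigma_2 Z(Z_2)$, so the left-hand side is all of $\Sigma_2 V(1,0,0)=Z(X_1)\cup Z(X_2)$. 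But $\Sigma_2 V = \{X_1=0,\,Z_2=0\}\cup\{X_2=0,\,Z_1=0\}$ is strictly smaller: the point with $X_1=0$, $X_2=Y_1=Y_2=1$ lies in the former but not the latter. The same counterexample defeats the refinement in your final paragraph: intersecting $V(1,0,0)$ with $Z(\widetilde Y)=Z(Z_1 Z_2)$ still gives all of $Z(X_1)$, whose $\Sigma_2$-saturation is too large. The underlying issue is that a $\Sigma_m$-conjugate of $Z(g_1,\dots,g_r)$ may be automatically satisfied on $V(s,t,0)$ because some $Z_i$ with $i\le s+t$ vanishes there.

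The paper's fix is exactly to neutralize this effect: first reduce to the case where each $g_k$ is a polynomial in $Z_{s+t+1},\dots,Z_m$ only (dropping terms that vanish on $V(s,t,0)$), and then intersect $W=\Sigma_m V(s,t,0)$ not with $\Sigma_m Z(g_1,\dots,g_r)$ but with $U=\Sigma_m Z(Z_1,\dots,Z_{s+t},g_1,\dots,g_r)$. Adding the extra weight-zero conditions $Z_1=\dots=Z_{s+t}=0$ prevents the $\Sigma_m$-conjugates of the $g_k$ from being vacuously satisfied, and one then checks (by a careful case analysis of how the $s$ vanishing $X$-coordinates distribute across the index ranges $1,\dots,s+t$ and $s+t+1,\dots,m$) that $U\cap W=\Sigma_m V$. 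This extra ingredient is the missing idea in your proposal.
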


\begin{proof}
For $i=1, \dotsc ,m$, set $Z_{i}=X_{i}Y_{i}$ and observe that, since they are weight zero, $g_{1},\dots,g_{r}$ are polynomials in $Z_{1},\dots,Z_{m}$. However, in $V(s,t,0)$ we have $X_{1}=\dots =X_{s}=Y_{s+1}= \dots = Y_{s+t}=0$, so we may drop any terms in the $g_{k}$ that involve $Z_{1},\dots,Z_{s+t}$ without changing $V$, and thus we may assume that the $g_{k}$ are polynomials in $Z_{s+t+1},\dots,Z_{m}$.

Set
$$
U = \Sigma_{m} Z(Z_{1},\dots,Z_{s+t},g_{1},\dots,g_{r} ).
$$
Then $U$ is realizable as the support of a module in $\F$ by \cref{SS:Weight0}, and $\Sigma_{m}V \subseteq U$.   Set $$W=\Sigma_{m} V(s,t,0).$$  Then $W$ is realizable by \cref{P:CoordsNoOverlap} and clearly $\Sigma_{m} V \subset W$. Consequently $U \cap  W$ is realizable by the tensor product property and $\Sigma_{m}V \subseteq U \cap W$.   Therefore it suffices to prove  $$U\cap W \subseteq \Sigma_{m}V.$$

We first observe that since $Z_{i}=0$ if and only if $X_{i}=0 \text{ or } Y_{i}=0$, and since we can permute the indices $1,\dots,s+t$ without affecting the polynomials $g_{1}, \dotsc , g_{r}$, we can write

\begin{align*}
U & =  \Sigma_{m} \left[ \left(\bigcup_{j=0}^{s+t}   V(j,s+t-j,0)  \right)\cap Z( g_{1},\dots,g_{r} ) \right] \\
 & = \bigcup_{j=0}^{s+t} \Sigma_{m} \left(  V(j,s+t-j,0) \cap Z( g_{1},\dots,g_{r} ) \right) =: \bigcup_{j=0}^{s+t} U_{j}.
\end{align*}


From this we see that to prove the claim it suffices to show $U_{j}\cap W\subset\Sigma_{m} V$ for all $j$.  Assume $j\leq s$; the proof for $j>s$ is similar and is left to the reader.

Write 
$$
U_{j}\cap W =\Sigma_{m} \big( V(j,s+t-j,0) \cap Z( g_{1},\dots,g_{r} ) \cap\Sigma_{m} V(s,t,0) \big).
$$
Viewing the intersection as imposing the conditions of $W$ on $V(j,s+t-j,0)$, we are now required to have $s$ coordinates $X_{i}$ to be zero.  As those in the range $1, \dotsc , j$ are already zero, we must have an additional number, say $u$, in the range $j+1, \dotsc , s+t$ and, say $v$, in the range $s+t+1, \dotsc ,  m$.  We note that then $j+u+v=s$.  We also observe that those which are in the range $1 \leq i \leq s+t$ can have their positions permuted without changing $g_{1}, \dotsc , g_{r}$.  Combining these observations we see that 
\[
U_{j} \cap W \subseteq \bigcup_{u=0}^{s-j} U_{j,u},
\] where 
\begin{equation*} \label{E:Uju}
U_{j,u} :=\Sigma_{m} \big(V(j+u,s+t-j,u) \cap Z( X_{s+t+1},\dots, X_{s+t+v}  ) \cap Z( g_{1},\dots,g_{r} )\big).
\end{equation*} Here we assume for simplicity of notation that the $v$ coordinates which involve indices in the range $s+t+1, \dotsc , m$ are $X_{s+t+1},\dots, X_{s+t+v}$.  An identical argument applies in the general case.

Let $\pi\in\Sigma_{m}$ be the product of the transpositions $(j+u+i, s+t+i)$ for $1\le i\le v$.  Recalling that $g_{1},\dots,g_{r}$ are polynomials in $Z_{s+t+1},\dots,Z_{m}$ we see that while $\pi g_{1}, \dotsc \pi g_{r}$ are not the same polynomials as $g_{1}, \dotsc , g_{r}$, we do have 
\begin{equation} \label{E:Ujuvi}
U_{j,u} =\Sigma_{m} \big(V(j+u,s+t-j,u) \cap Z( X_{s+t+1},\dots, X_{s+t+v} ) \cap Z(\pi g_{1},\dots,\pi g_{r} )\big).
\end{equation}
But now applying $\pi$ to the entire intersection in \cref{E:Ujuvi} and using that $j+u+v=s$, we see that
\[
U_{j,u} \subseteq \Sigma_{m}V.
\] This proves the claim and the theorem.
\end{proof}

\subsection{General Case} \label{SS:generalcase}
Finally we can prove realization in general.  That is, we will have finally verified  (\ref{SS:supportdata}.\ref{E:supporteight}) holds for the support data $\widehat{V}$ introduced in \cref{S:MainTheorem}.

\begin{theorem}\label{T:finalrealization}  Let $m \geq s,t \geq p \geq 0$ and let  $g_{1}, \dotsc , g_{r}$ be homogeneous weight zero polynomials.  Set $V=V(s,t,p) \cap Z (g_{1}, \dotsc , g_{r})$. Then there exists a finite dimensional  $\fg$-module $M$ such that 
\[
V^{r}_{\ff_{\1}}(M) = \Sigma_{m}V.
\]

\end{theorem}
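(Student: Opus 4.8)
The cases $p=0$ (\cref{P:mixed}) and $r=0$ (\cref{P:CoordsNoOverlap,P:CoordsOverlap}) are already known, so I would assume $p>0$ and $r>0$. The plan is to realize $\Sigma_{m}V$ as the support of a tensor product of three modules, one for each factor in a decomposition of $\Sigma_{m}V$ into varieties that have already been realized. First I would normalize the polynomials $g_{1},\dots,g_{r}$: since each is homogeneous of weight zero for $T$ it lies in the subring $\C[Z_{1},\dots,Z_{m}]$ with $Z_{j}=X_{j}Y_{j}$, and since $Z_{1}=\dots=Z_{s+t-p}=0$ on $V(s,t,p)$ we may delete the monomials involving those variables and assume $g_{i}\in\C[Z_{s+t-p+1},\dots,Z_{m}]$ without changing $V$. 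Next I would record, exactly as in the proof of \cref{P:CoordsOverlap}, the identity $\Sigma_{m}V(s,t,p)=\Sigma_{m}V(s,t-p,0)\cap\Sigma_{m}V(0,t,0)\cap\Sigma_{m}V(p,p,p)$, together with the (elementary) containments $V(s,t,p)\subseteq V(s,t-p,0)$, $V(s,t,p)\subseteq\Sigma_{m}V(0,t,0)$, and $V(s,t,p)\subseteq\Sigma_{m}V(p,p,p)$, obtained by conjugating each of the latter by a permutation carrying $\{1,\dots,p\}$ (resp.\ $\{1,\dots,t\}$) onto the appropriate index block of $(s,t,p)$.

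The core of the argument is the set-theoretic identity
\[
\Sigma_{m}\bigl(V(s,t,p)\cap Z(g_{1},\dots,g_{r})\bigr)=\Sigma_{m}\bigl(V(s,t-p,0)\cap Z(g_{1},\dots,g_{r})\bigr)\cap\Sigma_{m}V(0,t,0)\cap\Sigma_{m}V(p,p,p),
\]
which refines the decomposition of $\Sigma_{m}V(s,t,p)$ above. The inclusion from left to right is immediate: if $y\in V(s,t,p)\cap Z(g)$, then $y\in V(s,t-p,0)\cap Z(g)$, $y\in\Sigma_{m}V(0,t,0)$, and $y\in\Sigma_{m}V(p,p,p)$, and the three factors on the right are $\Sigma_{m}$-stable. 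The reverse inclusion is the heart of the matter. For it I would use the identity $GA\cap GB=G(A\cap GB)$ from \cref{SS:Coordinate} to pull the symmetric group outside, reducing to the following: a point $y\in V(s,t-p,0)\cap Z(g)$ which also lies in $\Sigma_{m}V(0,t,0)\cap\Sigma_{m}V(p,p,p)$ can, after applying a permutation $\tau$ stabilizing $V(s,t,p)$ — and hence, by the normalization above, fixing each $g_{i}$ — be moved into $V(s,t,p)\cap Z(g)$. This is carried out by the same bookkeeping as in the proof of \cref{P:mixed}: one writes the relevant intersection as a union indexed by the possible distributions of the newly-forced vanishing coordinates among the index blocks determined by $(s,t,p)$, and in each case conjugates by an explicit product of transpositions supported on the block $\{s+t-p+1,\dots,m\}$ on which the $g_{i}$ depend.

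Granting the identity, the theorem follows quickly. By \cref{P:mixed} there is $M_{1}\in\FF$ with $V^{r}_{\ff_{\1}}(M_{1})=\Sigma_{m}(V(s,t-p,0)\cap Z(g_{1},\dots,g_{r}))$; by \cref{T:Vstprealizations} there is $M_{2}\in\FF$ with $V^{r}_{\ff_{\1}}(M_{2})=\Sigma_{m}V(0,t,0)$; and by \cref{T:simples}, applied to a simple $\gl (m|n)$-module of atypicality $m-p$, there is $M_{3}\in\FF$ with $V^{r}_{\ff_{\1}}(M_{3})=\Sigma_{m}V(p,p,p)$. Then $M:=M_{1}\otimes M_{2}\otimes M_{3}$ lies in $\FF$, and the tensor product property (\ref{SS:supportdata}.\ref{E:supportfive}) of the support data $V^{r}_{\ff_{\1}}(-)$ gives $V^{r}_{\ff_{\1}}(M)=V^{r}_{\ff_{\1}}(M_{1})\cap V^{r}_{\ff_{\1}}(M_{2})\cap V^{r}_{\ff_{\1}}(M_{3})=\Sigma_{m}V$, as required. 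I expect the \emph{only} genuine obstacle to be the set-theoretic identity, specifically its reverse inclusion: because the $g_{i}$ are not $\Sigma_{m}$-invariant one cannot naively distribute the symmetric group over the intersection, so one must track carefully how each witnessing permutation interacts with the blocks on which the $g_{i}$ are supported — the same delicacy already encountered in \cref{P:mixed}, which I expect this proof to reuse rather than to require a new idea.
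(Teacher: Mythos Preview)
Your proposal is correct and takes essentially the same approach as the paper: write $\Sigma_{m}V$ as the triple intersection $\Sigma_{m}\bigl(V(s,t-p,0)\cap Z(g_{1},\dots,g_{r})\bigr)\cap\Sigma_{m}V(0,t,0)\cap\Sigma_{m}V(p,p,p)$, realize the three factors via \cref{P:mixed}, \cref{T:Vstprealizations}, and \cref{T:simples} respectively, and tensor. One small correction to your sketch of the reverse inclusion: the permutations used (as in \cref{P:mixed}) neither stabilize $V(s,t,p)$ nor fix the $g_{i}$ as polynomials---they swap indices between the block $\{1,\dots,s+t-p\}$ and the block $\{s+t-p+1,\dots,m\}$---but the point is that on the subvariety in question the swapped $Z$-variables all vanish, so replacing $g_{i}$ by $\pi g_{i}$ does not change the zero locus there.
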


\begin{proof}  Let $U=\Sigma_{m}\left(V(s,t-p,0)\cap Z(g_{1}, \dotsc , g_{r}) \right)$,  $W=\Sigma_{m}V(p,p,p)$,  $Y=\Sigma_{m}V(0,t,0)$.  By \cref{P:mixed,T:Vstprealizations,T:simples}, respectively, these varieties can be realized as the support varieties of finite dimensional $\fg$-modules.  Their tensor product then realizes $U\cap W \cap Y$.  An analysis similar to the proof of the previous result shows that $\Sigma_{m}V=U\cap W \cap Y$.
\end{proof}

\let\section=\oldsection
\bibliographystyle{amsalpha}
\bibliography{BKN5}

\end{document}